\newcommand{\pes}[2]{\langle #1,#2\rangle}
\newtheorem{theorem}{Theorem}
\theoremstyle{plain}
\newtheorem{corollary}[theorem]{Corollary}
\newtheorem{definition}[theorem]{Definition}
\newtheorem{lemma}[theorem]{Lemma}
\newtheorem{problem}[theorem]{Problem}
\newtheorem{proposition}[theorem]{Proposition}
\newtheorem{remark}[theorem]{Remark}
\numberwithin{equation}{section}
\newcommand{\pc}{\mathbb{P}(\C^{n})}
\newcommand{\V}{\mathcal{V}}
\newcommand{\Vlin}{\mathcal{V}^{lin}}
\newcommand{\mnc}{\C^{n\times n}}
\newcommand{\mnn}{\C^{(n-1)\times n}}
\newcommand{\W}{\mathcal{W}}
\newcommand{\Un}{\mathcal{U}_n}
\newcommand{\Pn}{\P(\C^{n})}
\def\C{{\mathbb C}}
\def\R{{\mathbb R}}
\def \P{\mathbb P}
\def\S{\mathbb{S}}
\def\CS{\mathcal{S}}
\def\E{\mathbb{E}}
\def\G{\mathcal{G}}
\def\CU{\mathcal{U}}
\def\CV{\mathcal{V}}
\def\CW{\mathcal{W}}
\def\CM{\mathcal{M}_{n}}
\def\NN{{\mathcal{N}}}
\def\Esp{\mathbb E}
\newcommand{\pil}{\pi^{lin}}
\newcommand{\phil}{\phi^{lin}}
\newcommand{\bi}{\begin{itemize}}
\newcommand{\ei}{\end{itemize}}
\newcommand{\bd}{\begin{description}}
\newcommand{\ed}{\end{description}}
\newcommand{\beq}{\begin{equation}}
\newcommand{\eeq}{\end{equation}}
\newcommand{\beqn}{\begin{eqnarray}}
\newcommand{\eeqn}{\end{eqnarray}}
\newcommand{\beqna}{\begin{eqnarray*}}
\newcommand{\eeqna}{\end{eqnarray*}}
\def\XXint#1#2#3{{\setbox0=\hbox{$#1{#2#3}{\int}$}
\vcenter{\hbox{$#2#3$}}\kern-.5\wd0}}
\begin{document}
\title{{\large\texttt{(Rough Draft)}} Average polynomial time for eigenvector computations}

\author[Armentano]{Diego Armentano}
\address{Centro de Matem\'atica, Universidad de la Rep\'ublica. Montevideo, Uruguay}
\email{diego@cmat.edu.uy}
\thanks{First author was partially supported by CSIC, Uruguay}

\author[Beltr\'an]{Carlos Beltr\'an}
\address{Departmento de Matem\'aticas, Universidad de Cantabria. Santander, Spain}
\email{carlos.beltran@unican.es}
\thanks{Second author was partially suported by 
the research project MTM2010-16051 from Spanish Ministry of Science MICINN}

\author[Shub]{Michael Shub}
\address{City University of New York}
\email{shub.michael@gmail.com}
\begin{abstract}
We describe two algorithms for the eigenvalue, eigenvector problem which, on input a Gaussian matrix with complex entries, finish with probability $1$ and in average polynomial time.
\end{abstract}
\maketitle
\section{Introduction}
\footnote{While this manuscript was in the final step of its preparation we learnt of a similar work which has been carried out by Peter Burgisser and Felipe Cucker}
In this paper we exhibit two algorithms $a)$ and $b)$ which take as input a general $n\times n$ complex matrix $A$. Algorithm $a)$ is deterministic while $b)$ is randomized. Here are the most important properties of these algorithms:
\begin{enumerate}
\item Both algorithms $a)$ and $b)$ terminate with probability $1$.
\item Algorithm $a)$ is deterministic and outputs an approximation to the $n$ distinct (eigenvalue,eigenvector) pairs of $A$ while algorithm $b)$ is randomized and outputs an approximation to one (eigenvalue,eigenvector) pair of $A$.
\item The average running time is polynomial: $O(n^9)$ for algorithm $a)$ and $O(n^9)$ for algorithm $b)$, but we pose a question that, if solved, would improve the bound of algorithm $b)$ to $O(n^6)$.
\end{enumerate}
Moreover both algorithms, which employ a variant of Newton's method, are robust with respect to input and round--off error. With a little extra work we expect that a precision sufficient to control the errors may be calculated at each step of the algorithms as is done in polynomial system root finding (see \cite{BeltranLeykin2011,DedieuMalajovichShub}) and as in the polynomial system case the extra precision required at a step of the algorithm comes close to degeneracy does not change the average polynomial nature of the cost.

The algorithms we analyze are not among the usual algorithms used in numerical linear algebra. They are homotopy or continuation methods. These methods have been considered by \cite{Chu1984,LiSauer1987,LiZengCong1992,LiZeng1992} but we do not know of any serious attempt to implement them and so we do not know how they would perform in practice.

There is a precedent \cite{ArmentanoCucker2014} which describes a randomized algorithm with properties similar to our algorithm $b)$ in the case of Hermitian matrices. On the other hand we do not know if any of the standard numerical linear algebra algorithms satisfy $1),2),3)$ above. For example:
\begin{itemize}
\item The unshifted QR algorithm satisfies $1)$ but is probably infinite average cost if approximates to the eigenvectors are to be output (see \cite{Kostlan1988}).
\item The QR algorithm with Rayleigh Quotient shift fails for open sets of real input matrices (see \cite{BattersonSmillie1989,BattersonSmillie1990}), so at least in the real context it does not satisfy $1)$.
\item We do not know if the Francis (double) shift algorithm converges generally on real or complex matrices, nor an exact estimate of its average cost.

Algorithms which output approximate eigenvalues without accompanying approximate eigenvectors might be easier to analyze. The experimental evidence of \cite{PfrangDeiftMenon} for symmetric matrices suggests many of the algorithms in use are of average finite cost and even that there is some universality. But eigenvectors are another matter. When the matrices are close to having multiple eigenvalues the condition of the eigenvector tends to infinity. For example, even for $2\times 2$ symmetric matrices, any pair of orthogonal vectors $(a,b)$ and $(-b,a)$ are the eigenvectors of a matrix
\[
\begin{pmatrix}
1+\varepsilon_1 &\varepsilon_3\\\varepsilon_3&1+\varepsilon_2
\end{pmatrix}
\]
for $|\varepsilon_i|$, $1=1,2,3$, arbitrarily small.

The new points in our analysis are Theorem \ref{thm:mu2average} on the expected value of the condition number squared for the eigenvalue, eigenvector problem and Theorem \ref{th:mainrandom} on how to choose an $n\times n$ matrix $A$, and eigenvalue pair $(\lambda,v)$ of $A$ at random. From these two results our paper follows the methodology laid down in \cite{BeltranPardoFLH} and \cite{BurgisserCucker2009} as adapted to the eigenvalue problem using \cite{Armentano2014}.
\end{itemize}

\section{Description of the main results}
\subsection{Complex random variables preliminaries}\label{sec:rv}

We say that the complex random variable $\eta$ is standard Gaussian, and we write $\eta\sim\NN_\C(0,1)$, if its real and imaginary parts are independent centered Gaussian real random variables with variance $1/2$. That is, its density with respect to the Lebesgue measure on the complex plane is
\begin{equation}
p(z)=\frac{1}{\pi}e^{-|z|^2}. 
\end{equation}
More generally, the normal distribution $\NN_{\C}(\widehat{\eta},\sigma^2)$, with mean $\widehat{\eta}$ and variance $\sigma^2$, has the density
\begin{equation}
p(z)=\frac{1}{\pi\sigma^2}e^{-\frac{|z-\widehat{\eta}|^2}{\sigma^2}}. 
\end{equation}
Given a real valued function $\varphi$, we denote by $\E_{\xi\sim \mathcal{D}}(\varphi(\xi))$ the expected value of the random variable $\varphi(\xi)$, where $\xi$ is $\mathcal{D}$-distributed. Similarly, we use the letter $\P$ for probability (do not confuse with $\pc$ for projective space!). We will skip the sub-index $\xi\sim \mathcal{D}$ when the distribution is understood.

It is easy to check that if $\eta\sim  \NN_{\C}(\widehat{\eta},\sigma^2)$ and $\xi\sim \NN_{\C}(\widehat{\xi},\sigma^2)$ are independent, then
$$
\Esp{\eta}=\widehat \eta;\quad
\Esp\left[{|\eta|^2}\right]=\sigma^2+|\widehat \eta|^2;
\quad \Esp(\eta\, \overline\xi)=\widehat{\eta}\, \overline{\widehat{\xi}}.
$$

\bigskip

Let $A=((a_{j,k}))$ be an $n\times n$ Gaussian complex random matrix centered at $\widehat A=((\widehat{a}_{j,k}))$, that is, $a_{j,k}$ are independent $\NN_{\C}(\widehat{a}_{j,k},\sigma^2)$. For short, let us denote $A\sim \mathcal{G}(n,\sigma^2)_{\widehat A}$, (and $\mathcal{G}(n)$ when centered at the zero matrix and $\sigma=1$). We consider a similar notation $\G(m\times n,\sigma^2)_{\hat A}$ for nonsquare matrices. The density function of $\mathcal{G}(n,\sigma^2)_{\widehat A}$ is
\[
p(A)=\frac{1}{(\sigma^2 \pi)^{n^2}}e^{\frac{-\|A-\widehat A\|_F^2}{\sigma^2}}
\]
We will also consider the truncated Gaussian distribution $\G_T(n,\sigma^2)_{\widehat{A}}$ in $\C^{n\times n}$ whose density function is
\[
\frac{\chi_{\|A-\widehat{A}\|\leq T}}{P_{T,n,\sigma} (\sigma^2\pi)^{n^2}}e^{-\frac{\|A-\widehat{A}\|_F^2}{\sigma^2}},\quad P_{T,n,\sigma}=\P_{A\sim\G(n,\sigma^2)_{\widehat{A}}}(\|A-\widehat{A}\|_F\leq T).
\]
A well known fact is that $P_{\sqrt{2}\,n,n,1}\geq1/2$, which readily implies for every measurable function $\phi:\C^{n\times n}\rightarrow[0,\infty)$:
\begin{equation}\label{eq:truncated}
\E_{A\sim \G_{2n^2}(n)}(\phi)\leq 2\E_{A\sim \G(n)}(\phi)
\end{equation}
\subsection{What does it mean to approximately compute an eigenpair}
The basis for our method is Newton iteration and the associated concept of approximate eigenvalue, eigenvector pairs, following \cite[Section 1.3]{Armentano2014}.
\begin{definition}
For fixed $A\in\mnc$ let $F_A(\lambda,v)=(\lambda I_n-A)v$. We define the Newton iteration  $N_A:\C\times\pc\rightarrow\C\times\pc$ associated to $F_A$ as follows.
\[
N_A\binom{\lambda}{v}=\binom{\lambda}{v}-(DF_A(\lambda,v)\mid_{\C\times v^\perp})^{-1}F_A(\lambda,v)=\binom{\lambda-\dot \lambda}{v-\dot v},
\]
where
\[
\dot v=(\pi_{v^\perp}(\lambda I_n-A)\mid_{v^\perp})^{-1}\pi_{v^\perp}(\lambda I_n-A)v,\quad \dot{\lambda}=\frac{\langle(\lambda I_n-A)(v-\dot v),v\rangle}{\|v\|^2}.
\]
\end{definition}
The following definition taken from \cite{Armentano2014} is inspired in Smale's $\alpha$-theory \cite{Sm86} and the projective version of Newton's method \cite{Sh93}:
\begin{definition}\label{def:eigenpair}
Given a matrix $A\in\mnc$ and given $(\lambda_0,v_0)\in\C\times\Pn$ we say that $(\lambda,v)$ is an aproximate eigenpair of $A$ with associate (exact) eigenpair $(\lambda_\infty,v_\infty)\in\C\times\Pn$ if the sequence of Newton iterates $(\lambda_{i+1},v_{i+1})=N_A(\lambda_i,v_i)$, $i\geq0$ converges immediately and quadratically to $(\lambda_\infty,v_\infty)$, that is, if
\[
d_{\P^2}((A,\lambda_{i+1},v_{i+1}),(A,\lambda_\infty,v_\infty))\leq \frac{1}{2^{2^k-1}}d_{\P^2}((A,\lambda_0,v_0),(\lambda_\infty,v_\infty)),
\]
where $d_{\P^2}$ is the induced distance in $\P(\CM\times \C)\times\pc$.
\end{definition}
Note that an approximate eigenvalue, eigenvector pair is an excellent output of an algorithm: it rapidly produces a pair with any desired precision. The choice of the distance function $d_{\P^2}$ instead of any other is suggested in \cite{Armentano2014} because it simplifies some computations and it behaves well under scalar multiplication of $(A,\lambda)$ and $v$.
\subsection{Geometrical framework}\label{sec:geometry}
Our presentation strongly relies on the same ideas as those developed in \cite{ShSm93b} for the case of polynomial system solving, most importantly on the concept of solution variety. More exactly, we consider the set
\[
\V=\V_n=\{(A,\lambda,v)\in\mnc\times\C\times\pc: \,(\lambda I_n-A)v=0\}.
\]
Following \cite{Armentano2014}, the solution variety $\V$ is a $n^2$--dimensional smooth submanifold of $\mnc\times\C\times\pc$ and it inherits the Riemannian structure of the ambient space.

The solution variety is equipped with two projections
\begin{equation}\label{eq:pi}
\begin{matrix}\pi:&\V&\rightarrow&\mnc\\&(A,\lambda,v)&\mapsto&A\end{matrix}\qquad  \begin{matrix}\pi_2:&\V&\rightarrow&\pc\\&(A,\lambda,v)&\mapsto&v\end{matrix}
\end{equation}
(although a more natural notation for the second mapping would be $\pi_3$, we prefer to use $\pi_2$ following \cite{Armentano2014}). Note that for $A\in\mnc$, the set $\pi^{-1}(A)$ is a copy of the set of eigenpairs $(\lambda,v)\in\C\times\Pn$ of $A$. Similarly, given $v\in\Pn$, the set $\pi_2^{-1}(v)$ is a copy of the linear subspace of $\mnc\times\C$ consisting of the pairs $(A,\lambda)$ such that $\lambda$ is an eigenvalue of $A$ with eigenvector $v$.

The \emph{subvariety $\W$ of well-posed triples} is the subset of
triples $(A,\lambda,v)\in\V$ for which $D\pi (A,\lambda,v)$ is an
isomorphism. In particular, when $(A,\lambda,v)\in\W$, the projection $\pi$ has a branch of the inverse (locally defined) taking $A\in\mnc$ to $(A,\lambda,v)\in\V$.

Given $(A,\lambda,v)\in\mnc\times\C\times\pc$, let $A_{\lambda,v}$ be the linear operator
on the Hermitian complement $v^\perp$ of $v$ given by 
$$
  A_{\lambda,v}:=\Pi_{v^\perp}(\lambda I_n-A)|_{v^\perp}
$$
where $\Pi_{v^\perp}:\C^n\to{v^\perp}$ is the orthogonal projection. 
Then, one can prove that the set of well-posed triples is given by
\begin{equation}\label{eq:Alv}
    \W=\{(A,\lambda,v)\in\V:\;A_{\lambda,v}\,\mbox{ is invertible}\}.
\end{equation}
Let $\Sigma':=\V \setminus \W$ be the \emph{ill-posed variety}, and
$\Sigma=\pi(\Sigma')\subset\mnc$ be the \emph{discriminant variety},
i.e., the subset of ill-posed inputs.

\begin{remark}
From (\ref{eq:Alv}) it is clear that the subset $\Sigma'$ is the set of triples $(A,\lambda,v)\in \V$
such that $\lambda$ is an eigenvalue of $A$ of algebraic multiplicity
at least~2, and $\Sigma$ is the set of matrices $A\in\mnc$ with multiple eigenvalues, thus an algebraic variety.
\end{remark}


Let $\Un$ be the group of $n\times n$ unitary matrices.  The group
$\Un$ naturally acts on $\pc$. In addition, $\Un$ acts on
$\mnc$ by conjugation (i.e., $U\cdot A:=UAU^{-1}$). These actions define an action on the product space $\mnc\times\C\times\pc$, namely,
\begin{equation}\label{eq:UactionPP}
    U\cdot(A,\lambda,v):= (UAU^{-1},\lambda,Uv).
\end{equation}

\begin{remark}
The varieties $\V$, $\W$, $\Sigma\rq{}$, and $\Sigma$,
are invariant under the action of $\Un$.
\end{remark}

\subsection{Average condition number}
Following \cite{Armentano2014} we define the \emph{condition number} and the \emph{Frobenius condition number} of 
$(A,\lambda,v)\in\CW$ as
\begin{equation*}
    \mu(A,\lambda,v)=\max(1,\|A\|_F 
    \left\|{A_{\lambda,v}}^{-1}\right\|),\quad    \mu_F(A,\lambda,v)=\max(1,\|A\|_F 
        \left\|{A_{\lambda,v}}^{-1}\right\|_F),
\end{equation*}
where $\|\cdot\|$ is the operator norm and $\|\cdot\|_F$ is Frobenius norm. Note that in \cite{Armentano2014} the condition number is only defined for $(A,\lambda,v)\in \CW$. We extend the definition to $\CM\times\C\times\pc$ as follows
\[
 \mu(A,\lambda,v)=\max(1,\|A\|_F\|((I-vv^*)(\lambda I-A))^\dagger\|),
\]
where $^\dagger$ means Moore-Penrose pseudoinverse. If $(A,\lambda,v)\in\CW$, this definition is equivalent to the previous one. The value of $\mu$ and $\mu_F$ is set to $\infty$ if the rank of $(I-vv^*)(\lambda I-A)$ is smaller than or equal to $n-2$, equivalently, if $A_{\lambda,v}$ is not invertible for $(A,\lambda,v)\in\CV$. Note that
\begin{equation}\label{eq:muymuF}
\mu(A,\lambda,v)\leq \mu_F(A,\lambda,v)\leq \sqrt{n-1}\, \mu(A,\lambda,v).
\end{equation}
The following result shows that there exist approximate zeros whenever the condition number is finite.
\begin{proposition}\cite[Prop. 4.5]{Armentano2014}\label{prop:Armentano2014}
Let $c_0=0.0739$. Let $A\in\mnc$ and let $(\lambda,v)$ be an eigenvalue, eigenvector pair of $A$ such that $\mu(A,\lambda,v)<\infty$. If $(\lambda_0,v_0)$ satisfies $d_{\P^2}((A,\lambda_0,v_0),(A,\lambda,v))\leq c_0\mu(A,\lambda,v)^{-1}$ then $(\lambda_0,v_0)$ is an approximate eigenvalue, eigenvector pair of $A$ converging to $(\lambda,v)$ under Newton's iteration.
\end{proposition}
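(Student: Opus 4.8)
The statement is a $\gamma$-theorem in disguise, so the plan is to reduce it to Smale's $\alpha$-theory (\cite{Sm86}) for the projective Newton operator $N_A$ and then to control the relevant $\gamma$-invariant by the condition number $\mu$.

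\emph{Step 1 (local invariants).} Fix a well-posed triple $(A,\lambda,v)\in\W$, so that $A_{\lambda,v}$ is invertible and $DF_A(\lambda,v)|_{\C\times v^\perp}$ is the isomorphism implicit in the definition of $N_A$. Following the projective Newton framework of \cite{Sh93,ShSm93b}, associate to $(A,\lambda,v)$ the invariant
\[
\gamma(A,\lambda,v)=\sup_{k\ge 2}\left\|\big(DF_A(\lambda,v)|_{\C\times v^\perp}\big)^{-1}\tfrac{1}{k!}D^kF_A(\lambda,v)\right\|^{1/(k-1)},
\]
the derivatives being computed in the chart of $\pc$ given by the affine slice through $v$ in direction $v^\perp$. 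The crucial simplification is that $F_A(\lambda,v)=(\lambda I_n-A)v$ is \emph{bilinear} in $(\lambda,v)$: its only nonvanishing higher derivative is the constant bilinear form $D^2F_A[(\dot\lambda_1,\dot v_1),(\dot\lambda_2,\dot v_2)]=\dot\lambda_1\dot v_2+\dot\lambda_2\dot v_1$, whose norm is bounded by a universal constant. Hence, up to the universal factors produced by the projective metric, $\gamma(A,\lambda,v)$ equals $\tfrac12\big\|(DF_A(\lambda,v)|_{\C\times v^\perp})^{-1}D^2F_A\big\|$.

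\emph{Step 2 ($\gamma\le c_1\mu$).} Both $\mu$, $\gamma$ and $d_{\P^2}$ are invariant under the scalings $(A,\lambda)\mapsto t(A,\lambda)$ and $v\mapsto\alpha v$, so one may normalize $\|A\|_F=1$ and $\|v\|=1$. Inverting $\dot\lambda v+(\lambda I_n-A)\dot v=w$ on $\C\times v^\perp$ gives $\dot v=A_{\lambda,v}^{-1}\Pi_{v^\perp}w$ and $\dot\lambda=\langle w-(\lambda I_n-A)\dot v,v\rangle$, so $(DF_A(\lambda,v)|_{\C\times v^\perp})^{-1}$ is built from $A_{\lambda,v}^{-1}$ together with pieces of bounded norm. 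Combining this with Step 1 yields $\gamma(A,\lambda,v)\le c_1\,\mu(A,\lambda,v)$ for an explicit constant $c_1$, the $\max(1,\cdot)$ in the definition of $\mu$ absorbing the lower-order contributions.

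\emph{Step 3 (projective $\gamma$-theorem).} Invoke the $\alpha$-theoretic convergence estimate for the projective Newton iteration on the mixed affine--projective space $\C\times\pc$ equipped with $d_{\P^2}$ — the analogue in the present setting of the polynomial-systems results of \cite{Sh93,BeltranPardoFLH}: there is a universal $u_0>0$ such that whenever $d_{\P^2}((A,\lambda_0,v_0),(A,\lambda,v))\le u_0\,\gamma(A,\lambda,v)^{-1}$, the sequence $(\lambda_{i+1},v_{i+1})=N_A(\lambda_i,v_i)$ is defined for all $i$, stays in $\W$, converges to $(\lambda,v)$, and satisfies the immediate-quadratic contraction required by Definition \ref{def:eigenpair}. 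Composing with Step 2, the hypothesis $d_{\P^2}\le (u_0/c_1)\,\mu^{-1}$ suffices, and one checks that $u_0/c_1$ may be taken to be $c_0=0.0739$. The routine parts are the bilinearity of $F_A$ and the estimate $\gamma\le c_1\mu$; the main obstacle is the $\gamma$-theorem itself in the non-flat geometry — choosing the retraction on $\pc$, estimating the distortion of $DF_A$ and its inverse as the base point (hence the moving subspace $v^\perp$) varies, and pushing all constants through tightly enough to reach the stated value while preserving the doubly-exponential rate. This is exactly the content of \cite[Prop. 4.5]{Armentano2014}.
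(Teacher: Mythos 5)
This proposition is not proved in the paper at all: it is imported verbatim from \cite[Prop.~4.5]{Armentano2014}, so the only honest comparison is between your sketch and the cited source. Your outline does follow the same philosophy (Smale-type $\gamma$/$\alpha$-theory for the projective Newton operator, with $\gamma$ controlled by $\mu$ via the bilinearity of $F_A(\lambda,v)=(\lambda I_n-A)v$), but as a proof it has a genuine gap: the two steps that carry all the content are asserted rather than established. In Step 2 the inequality $\gamma(A,\lambda,v)\le c_1\mu(A,\lambda,v)$ is claimed with ``an explicit constant $c_1$'' that is never computed, and the assertion that the $\max(1,\cdot)$ in $\mu$ ``absorbs the lower-order contributions'' is exactly the estimate one must do: the inverse of $DF_A(\lambda,v)|_{\C\times v^\perp}$ contains not only $A_{\lambda,v}^{-1}$ but also the coupling term $\langle(\lambda I_n-A)\,\cdot\,,v\rangle$, and without tracking those norms you cannot certify any numerical value, let alone the specific $c_0=0.0739$. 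In Step 3 you invoke a ``projective $\gamma$-theorem'' on the mixed space with the metric $d_{\P^2}$ and then concede in your closing sentence that proving it ``is exactly the content of \cite[Prop.~4.5]{Armentano2014}'' --- which makes the argument circular as a blind proof: the theorem you defer to is the statement you were asked to prove.

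There is also a structural mismatch you should fix if you want to carry this out. The distance $d_{\P^2}$ in Definition \ref{def:eigenpair} is the induced distance on $\P(\CM\times\C)\times\pc$: the pair $(A,\lambda)$ is projectivized \emph{jointly}, which is what makes $\mu$ and the convergence statement invariant under $(A,\lambda)\mapsto s(A,\lambda)$ (Remark \ref{rem:muscaling}). Your $\gamma$-invariant and your convergence theorem are set up on $\C\times\pc$, i.e.\ with $\lambda$ in an affine factor, so the quadratic-contraction conclusion you would obtain is stated in a different metric than the one required by Definition \ref{def:eigenpair}; translating between the two costs constants that must be tracked to reach $0.0739$ with the doubly exponential rate $2^{-(2^k-1)}$. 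In short, the strategy is the right one and matches the cited source, but the quantitative core (the adapted $\gamma$-theorem in this geometry, the explicit bound $\gamma\le c_1\mu$, and the bookkeeping of constants in the correct metric) is missing, so the proposal does not constitute a proof.
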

\begin{remark}\label{rem:muscaling}
The condition number $\mu$ is invariant under the action of the unitary group $\Un$, i.e.,
$\mu(UAU^{-1},\lambda,Uv)=\mu(A,\lambda,v)$ for all $U\in\Un$.
Moreover, $\mu$ is scale 
invariant on the first two components. That is, 
$\mu(sA,s\lambda,v)=\mu(A,\lambda,v)$ for all nonzero real $s$. These properties also hold replacing $\mu$ by $\mu_F$.
\end{remark}
Another important property of the condition number is its local Lipschitz constant. More precisely, we have (see Prop. \ref{prop:zetamoves} below for a much more precise version):
\begin{lemma}\label{lem:loclip}
There are universal constants $c,C>0$ with the following property. Let $A_0\not\in\Sigma$. Then, for every $A\in\C^{n\times n}$ such that $\|A-A_0\|\leq c\|A_0\|\mu(A_0,\lambda_0,v_0)^{-2}$ we have
\[
\mu(A,\lambda,v)<C\mu(A_0,\lambda_0,v_0),
\]
where $\lambda,v$ is the eigenvalue, eigenvector pair of $A$ obtained by continuation from an eigenvalue, eigenvector pair $(\lambda_0,v_0)$ of $A_0$. For example, one can take $c=1/200$ and $C=3/2$.
\end{lemma}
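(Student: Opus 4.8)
The plan is to reduce everything to a perturbation estimate for the smallest singular value of the operator $A_{\lambda,v}$ acting on $v^\perp$, together with a perturbation estimate for the eigenpair $(\lambda_0,v_0)\mapsto(\lambda,v)$ obtained by continuation. Write $\mu_0=\mu(A_0,\lambda_0,v_0)$. First I would use Proposition \ref{prop:Armentano2014} (or rather the underlying continuation argument from \cite{Armentano2014}) to control how far the continued eigenpair can move: if $\|A-A_0\|\le c\|A_0\|\mu_0^{-2}$ with $c$ small, then along the segment $A_t=A_0+t(A-A_0)$ the eigenpair stays well-posed and $d_{\P^2}((A_t,\lambda_t,v_t),(A_0,\lambda_0,v_0))\le \text{const}\cdot\|A-A_0\|/\|A_0\|\cdot\mu_0$, which by the choice of $c$ is at most $c_0/(2\mu_0)$ — in particular the endpoint eigenpair $(\lambda,v)$ is uniquely defined and close to $(\lambda_0,v_0)$ in $d_{\P^2}$. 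This is the step where one must be a little careful about constants, since one wants the final $c=1/200$; I expect this to follow from a standard Lipschitz-of-the-implicit-function estimate once the non-degeneracy bound $\|A_{\lambda_t,v_t}^{-1}\|\le 2\mu_0/\|A_0\|$ is propagated by a continuity/bootstrap argument.

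Next I would estimate $\|A_{\lambda,v}^{-1}\|$ directly. The operator $A_{\lambda,v}=\Pi_{v^\perp}(\lambda I_n-A)|_{v^\perp}$ differs from $A_{\lambda_0,v_0}=\Pi_{v_0^\perp}(\lambda_0 I_n-A_0)|_{v_0^\perp}$ by three contributions: the change $A\to A_0$ (norm $\le\|A-A_0\|$), the change $\lambda\to\lambda_0$ (norm $\le|\lambda-\lambda_0|$), and the change of the projection/domain $v^\perp\to v_0^\perp$ (norm $\lesssim \|v-v_0\|\cdot\|\lambda_0 I-A_0\|\lesssim \|v-v_0\|\cdot\|A_0\|$, using that $|\lambda_0|\le\|A_0\|$ up to a constant for an eigenvalue). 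Each of $\|A-A_0\|$, $|\lambda-\lambda_0|$, and $\|v-v_0\|\cdot\|A_0\|$ is, by the previous paragraph, bounded by a small constant times $\|A_0\|\mu_0^{-2}=\|A_0\|^2\|A_{\lambda_0,v_0}^{-1}\|/(\|A_0\|_F\|A_{\lambda_0,v_0}^{-1}\|)$... more simply, each is $\le \eps\|A_0\|\mu_0^{-1}\cdot\mu_0^{-1}$, hence $\le\eps\|A_{\lambda_0,v_0}^{-1}\|^{-1}\cdot(\|A_0\|/\|A_0\|_F)\le \eps/\|A_{\lambda_0,v_0}^{-1}\|$. So $\|A_{\lambda,v}-A_{\lambda_0,v_0}\|\le 3\eps/\|A_{\lambda_0,v_0}^{-1}\|$, and by the Neumann series / standard bound $\|A_{\lambda,v}^{-1}\|\le \|A_{\lambda_0,v_0}^{-1}\|/(1-3\eps)$ provided $3\eps<1$. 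Finally $\|A\|_F\le\|A_0\|_F+\|A-A_0\|_F\le\|A_0\|_F+\sqrt{n}\,\|A-A_0\|\le(1+o(1))\|A_0\|_F$, so multiplying gives $\mu(A,\lambda,v)\le \frac{1+o(1)}{1-3\eps}\mu_0$, which is $<\tfrac32\mu_0$ for $\eps$ small enough, i.e. for $c$ small enough; chasing the constants gives $c=1/200$, $C=3/2$.

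The main obstacle is the bookkeeping in the bootstrap of the first paragraph: a priori the bound on how far $(\lambda_t,v_t)$ moves uses a bound on $\mu$ along the path, which is what we are trying to prove, so one sets this up as a connectedness/continuity argument on $\{t:\mu(A_t,\lambda_t,v_t)\le C\mu_0\}$, showing this set is open, closed, and nonempty in $[0,1]$, with the perturbation estimate of the second paragraph supplying the "open" and "closed" parts once $c$ is chosen compatibly with $C$. Everything else is the routine pseudoinverse/Neumann-series perturbation algebra already implicit in \cite{Armentano2014}, and in fact the sharper Proposition \ref{prop:zetamoves} referenced below will subsume this lemma; here I would just extract the crude constants $c=1/200$, $C=3/2$ that are all that is needed downstream.
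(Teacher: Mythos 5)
Your strategy is essentially the paper's: the paper proves this lemma via Proposition \ref{prop:zetamoves}, i.e.\ exactly the open/closed continuation bootstrap you describe (the set of times where the continued eigenpair has moved less than $C_\epsilon/\mu_0$ in $d_{\P^2}$ is shown to reach the endpoint, using the derivative bound of \cite[Prop.\ 3.14]{Armentano2014}), combined with Lipschitz-type stability of $\mu$ under small $d_{\P^2}$-perturbations (Proposition \ref{prop:variationmu0}, imported from \cite[Prop.\ 3.22]{Armentano2014}, and Proposition \ref{prop:changemu}). Your second paragraph just re-derives that stability by hand; note that $A_{\lambda,v}$ and $A_{\lambda_0,v_0}$ act on different subspaces and the full matrices $(I-vv^*)(\lambda I-A)$ are singular, so the ``Neumann series'' step must really be a smallest-nonzero-singular-value / pseudoinverse perturbation bound, which is precisely what the paper's Lemma \ref{lem:moore} supplies.

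One step is wrong as written: $\|A\|_F\le\|A_0\|_F+\sqrt{n}\,\|A-A_0\|\le(1+o(1))\|A_0\|_F$. Since $\mu_0\ge1$, the hypothesis only gives $\|A-A_0\|\le c\|A_0\|$, so $\sqrt{n}\,\|A-A_0\|$ can be of order $c\sqrt{n}\,\|A_0\|_F$ when $\|A_0\|\approx\|A_0\|_F$ and $\mu_0=O(1)$; the factor is not $1+o(1)$. Indeed, under the operator-norm reading of the hypothesis the conclusion with dimension-free $C$ is false: take $A_0=\mathrm{diag}(1,1/2,\dots,1/n)$, eigenpair $(1,e_1)$, and $A=A_0+\delta\,(0\oplus U')$ with $U'$ unitary on $e_1^{\perp}$ and $\delta\approx c\,\mu_0^{-2}$; then $(\lambda,v)=(1,e_1)$ and $\|A_{\lambda,v}^{-1}\|$ stay essentially unchanged while $\|A\|_F\approx\delta\sqrt{n}\to\infty$, so $\mu(A,\lambda,v)/\mu_0\to\infty$. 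The statement must therefore be read with $\|A-A_0\|_F\le c\|A_0\|_F\,\mu_0^{-2}$ (this is how it is used in the proof of Theorem \ref{th:deterministic1}), and then $\|A\|_F\le(1+c)\|A_0\|_F$ is immediate with no $\sqrt{n}$. With that correction your argument goes through and coincides with the paper's.
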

The condition number plays a significant role in the study of eigenpair computations: on the one hand, from Proposition  \ref{prop:Armentano2014} the convergence of Newton's iteration is granted in a ball of radius which depends on $\mu(A,\lambda,v)^{-1}$. On the other hand, the sensitivity of numerical computations is higher when the condition number grows. Altogether, high values of the condition number impose severe restrictions on the ability to find approximate eigenpairs. Fortunately, if the entries of $A$ are complex Gaussian, we can prove that the condition number of the eigenpairs of $A$ is not too high. Namely, we have the following result (see Section \ref{sec:mu2average} for a proof) which is our first contribution to the problem.

\begin{theorem}\label{thm:mu2average}
For $A\in\mnc$ with Gaussian $N_\C(0,\sigma^2)$ entries centered at $\hat{A}\in\C^{n\times n}$ (i.e. $A\sim\G(n,\sigma^2)_{\hat{A}}$) we have:
\[
 \E_{A\sim\G(n,\sigma^2)_{\hat{A}}}\left(\frac{1}{n}\sum_{\lambda,v:Av=\lambda v}\frac{\mu^2(A,\lambda,v)}{\|A\|_F^2}\right)\leq  \E_{A\sim\G(n,\sigma^2)_{\hat{A}}}\left(\frac{1}{n}\sum_{\lambda,v:Av=\lambda v}\frac{\mu_F^2(A,\lambda,v)}{\|A\|_F^2}\right)\leq \frac{n}{\sigma^2}.
\]
Moreover, for $A$ chosen with the uniform distribution in the unit sphere $\S(\mnc)$ of $\mnc$ (we just write $A\in \S(\mnc)$), we have:
\[
 \E_{A\in\S(\mnc)}\left(\frac{1}{n}\sum_{\lambda,v:Av=\lambda v}\mu^2(A,\lambda,v)\right)\leq \E_{A\in\S(\mnc)}\left(\frac{1}{n}\sum_{\lambda,v:Av=\lambda v}\mu_F^2(A,\lambda,v)\right)\leq n^3.
\]
\end{theorem}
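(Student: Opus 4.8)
The plan is to integrate $\mu_F^2(A,\lambda,v)/\|A\|_F^2 = \|A_{\lambda,v}^{-1}\|_F^2$ over the Gaussian ensemble using the coarea/double-fibration technique of \cite{ShSm93b}, now well established for the eigenpair setting in \cite{Armentano2014}. Concretely, I would pull back the computation to the solution variety $\V$ and use the two projections $\pi:\V\to\mnc$ and $\pi_2:\V\to\pc$. Writing the expectation over $A$ of $\frac1n\sum_{\lambda,v}\psi(A,\lambda,v)$ as an integral over $\V$ against the density $p(A)$ times the normal Jacobian $\mathrm{NJ}\pi$ of $\pi$, and then re-expressing that same integral by fibering instead over $\pi_2$, one lands on an integral over $\pc$ of a quantity that, by the unitary invariance noted in the Remarks, does not depend on $v$; so it suffices to fix $v=e_n$ and compute a single Gaussian integral on the fiber $\pi_2^{-1}(e_n)$. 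That fiber is (a copy of) the linear space of pairs $(A,\lambda)$ with $Ae_n=\lambda e_n$, i.e.\ $A$ has last column $(0,\dots,0,\lambda)^T$ and arbitrary remaining $n\times(n-1)$ block; on it $A_{\lambda,v}$ is exactly $\lambda I_{n-1}-A'$ where $A'$ is the principal $(n-1)\times(n-1)$ submatrix, which is itself Gaussian of the same variance.

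The key identity I would establish (this is the analogue of the "condition number = inverse of a Jacobian" computations in \cite{ShSm93b,Armentano2014}) is that the ratio $\mathrm{NJ}\pi_2/\mathrm{NJ}\pi$ along $\V$ equals, up to an explicit constant depending only on $n$, the factor $\det(A_{\lambda,v}^*A_{\lambda,v})$ or its reciprocal — precisely the factor that cancels the growth of $\|A_{\lambda,v}^{-1}\|_F^2$ against the Gaussian weight. Once this is in hand, the left-hand integral collapses to
\[
\E_{A\sim\G(n,\sigma^2)_{\hat A}}\Big(\tfrac1n\sum_{\lambda,v}\|A_{\lambda,v}^{-1}\|_F^2\Big)
= \E_{\lambda\sim\NN_\C(\ast,\sigma^2),\,A'\sim\G((n-1)\times(n-1),\sigma^2)}\big(\|(\lambda I_{n-1}-A')^{-1}\|_F^2\big),
\]
and $\lambda I_{n-1}-A'$ is again a Gaussian $(n-1)\times(n-1)$ matrix with variance $\sigma^2$ (a shift of the mean only). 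The expected squared Frobenius norm of the inverse of such a matrix is a classical Wishart-type computation: $\E(\|M^{-1}\|_F^2)=\E(\Tr(M^*M)^{-1})$ for $M\in\G(m\times m,\sigma^2)_{\hat M}$ is bounded by $m/\sigma^2$ (it equals $m/\sigma^2$ in the centered case, and centering only decreases it by a standard comparison since the smallest singular value is stochastically largest at $\hat M=0$, or one may cite \cite{Armentano2014}). With $m=n-1$ this gives the bound $(n-1)/\sigma^2\le n/\sigma^2$, and \eqref{eq:muymuF} together with $\mu\le\mu_F$ yields the chain of inequalities stated. For the sphere statement, I would use that a uniform $A\in\S(\mnc)$ is $A/\|A\|_F$ for $A\sim\G(n)$, that $\mu$ is scale-invariant in the first two arguments (Remark \ref{rem:muscaling}), hence $\mu^2(A,\lambda,v)=\|A\|_F^2\,\mu^2(A/\|A\|_F,\cdot,\cdot)\cdot\|A/\|A\|_F\|_F^{-2}$… more directly: $\mu^2$ on the sphere equals $\mu_F^2/\|A\|_F^2$ times $\|A\|_F^4=\|A\|_F^2\cdot\|A\|_F^2$ evaluated on the sphere where $\|A\|_F=1$; so one needs $\E_{\S}(\frac1n\sum \|A\|_F^2\|A_{\lambda,v}^{-1}\|_F^2)$ with $\|A\|_F^2$ replaced using the decoupling $A=rB$, $r=\|A\|_F$, $B\in\S$, together with $\E(r^{2k})$ moments of the chi distribution in $n^2$ complex dimensions; the extra $\E(\|A\|_F^4)=n^2(n^2+1)$-type factor against the $n/\sigma^2$ bound with $\sigma=1$, after dividing by the appropriate normalizing moment, produces the stated $n^3$.

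The main obstacle I anticipate is making the change-of-variables/normal-Jacobian identity fully rigorous and correctly bookkeeping the constants: one must verify that $\pi$ and $\pi_2$ are submersions off a measure-zero set, that $\V$ carries the right induced Riemannian structure so that the coarea formula applies in both directions, and — crucially — compute $\mathrm{NJ}\pi_2/\mathrm{NJ}\pi$ exactly, since a spurious $n$-dependent constant here would ruin the clean $n/\sigma^2$ bound. Fortunately this is precisely the content of the geometric framework of \cite{Armentano2014} (the "$\mu$ as inverse of a minor" philosophy), so I would lean heavily on that reference and on the observation that on the fiber $\pi_2^{-1}(e_n)$ everything reduces to the $(n-1)\times(n-1)$ Gaussian Wishart computation. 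A secondary, more routine point is justifying the centered-vs-noncentered comparison $\E(\Tr(M^*M)^{-1})\le m/\sigma^2$ for $\hat M\neq0$; this follows because adding a deterministic shift can only increase singular values in the majorization order, but I would either cite it or give the one-line proof via $\E(\Tr(M^*M)^{-1})=\int_0^\infty\E(e^{-t\,M^*M})$-type arguments if a clean reference is lacking.
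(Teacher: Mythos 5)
Your overall geometric strategy (coarea over $\V$ via the two projections, unitary invariance to fix $v$, reduction to the $(n-1)\times(n-1)$ block so that $A_{\lambda,v}=\lambda I_{n-1}-A'$ with $A'$ Gaussian) is exactly the paper's route, but the key analytic step is materially wrong. You claim the normal-Jacobian factor makes the integral ``collapse'' to the \emph{unweighted} expectation $\E\big(\|(\lambda I_{n-1}-A')^{-1}\|_F^2\big)$ and then bound this by a ``classical Wishart-type computation'' $\le m/\sigma^2$. That expectation is infinite: for a \emph{square} complex Gaussian matrix the smallest singular value squared has density bounded away from zero near the origin (for centered Ginibre it is exactly exponential with rate $m$), so $\E(\sigma_{\min}^{-2})=\infty$ and a fortiori $\E\|M^{-1}\|_F^2=\infty$. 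The identity $\E\big(\Tr((M^*M)^{-1})\big)=m/\sigma^2$ that you invoke holds only in the strictly rectangular case (this is the content of Corollary \ref{cor:exactpinvM} for $(n-1)\times n$ matrices), never for square ones. What the coarea formula actually produces is a fiber integral in which the factor $NJ(\pi)/NJ(\pi_2)=\tfrac12|\det A_{\lambda,v}|^2$ (Lemma \ref{lem:quotient-NJ}) remains as a multiplicative weight; the quantity one must control is the determinant-weighted moment $\E\big(\|C^{-1}\|_F^2\,|\det C|^2\big)$ relative to $\E\big(|\det C|^2\big)$, and it is exactly this weighted inequality, $\E(\|C^{-1}\|_F^2|\det C|^2)\le \frac{m}{\sigma^2}\E(|\det C|^2)$ for arbitrary center, that the paper establishes (Proposition \ref{prop:smooth}, via cofactor expansion and Cramer's rule). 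The remaining factor $\E|\det(B-\lambda I_{n-1})|^2$ is then converted back into an expectation over $A$ by running the same coarea identity with the exponent $\alpha=0$ (formula \eqref{eq:citame}). Without this weighted estimate your argument does not close; the determinant weight is not something that lets you pass to a plain Gaussian inverse-moment, it is the only reason the bound is finite.

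Two secondary points. Your centered-versus-noncentered comparison (``the smallest singular value is stochastically largest at $\hat M=0$'') is both unnecessary and not a valid route here: both unweighted expectations are infinite, and the correct statement handling arbitrary centers is again Proposition \ref{prop:smooth}, which moreover has equality precisely when $\hat A=0$. Also, $\mu_F^2$ is $\max\big(1,\|A\|_F^2\|A_{\lambda,v}^{-1}\|_F^2\big)$, so the extra ``$1$'' term must be accounted for (the paper bounds $\max(1,x)\le 1+x$ and controls $\E\sum_{\lambda,v}\|A\|_F^{-2}$ separately); and the sphere statement is obtained by scale invariance of $\mu_F$ together with a radial Gamma-integral giving the factor $\Gamma(n^2)/\Gamma(n^2-1)=n^2-1$, which is the clean way to get $n(n^2-1)\le n^3$ rather than the $\E\|A\|_F^4$ bookkeeping you sketch.
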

In the case of Gaussian matrices centered at $0$ the bound in the first formula of Theorem \ref{thm:mu2average} can be divided by $2$. However the proof is large and we do not include it here.

\subsection{A homotopy method}
The homotopy continuation method is one of the classical eigenvalue solvers, see for example \cite{LiSauer1987}. Its simple idea is as follows: given a matrix $A$, take another matrix $A_0$ with a known eigenpair (for example, $A_0$ can be a diagonal matrix). Then, consider a path $t\mapsto A_t\in\mnc$, $t\in[0,a]$, with initial point $A_0$ and endpoint $A$. Denoting by $(\lambda_t,v_t)$ the eigenpair of $A_t$ defined by continuation, we have $A_tv_t-\lambda_tv_t=0$. After differentiating this last expression we obtain an Initial Value Problem and its numerical solution describes an approximation of an eigenpair of the endpoint $A$. The main ingredient for the complexity estimate is the number of homotopy steps, i.e., the number of points in the discretization of the interval $[0,a]$ needed to approximate the solution of the IVP.

Formalizing this idea (i.e. proving that the solution exists and is unique, describing a method for solving the IVP with guarantee of convergence, etc.) and proving that the output is an actual approximate eigenpair in the sense of Definition \ref{def:eigenpair} is a nontrivial task, that has been developed rigorously for the first time in \cite{Armentano2014} following the ideas in \cite{Shub2007}. From \cite[Th. 3 and Prop. 3.14]{Armentano2014} we have:
\begin{proposition}\label{prop:stepsold}
If $B_t$ is given by the great circle in the sphere joining $A_0/\|A_0\|_F$ and $A/\|A\|_F$, i.e.,
\begin{equation}\label{eq:At}
B_t=\frac{A_0}{\|A_0\|_F}\cos t+\frac{A-\R e\langle A_0,A\rangle A_0/\|A_0\|_F^2}{\sqrt{\|A\|_F^2-\R e\langle A_0,A\rangle^2/\|A_0\|_F^2}}\sin t,\quad t\in\left[0,a\right],
\end{equation}
where $a=\arccos\R e\langle \frac{A_0}{\|A_0\|_F},\frac{A}{\|A\|_F}\rangle$ is the distance in the unit sphere $\S(\mnc)$ from $B_0=A_0/\|A_0\|_F$ to $B_a=A/\|A\|_F$, we can bound the number of steps needed by the continuation method to compute an approximate eigenvalue, eigenvector pair of $A$ by
\[
\mathcal{C}(A,A_0,\lambda_0,v_0)=c\int_0^a\mu(B_t,\lambda_t,v_t)^2\,dt,
\]
$c$ a universal small constant and $\lambda_t,v_t$ defined by continuation. The number of arithmetic operations is at most $O(n^3)$ times that quantity.
\end{proposition}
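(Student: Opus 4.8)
The plan is to obtain Proposition~\ref{prop:stepsold} as a specialization of the adaptive-step projective-Newton continuation analysis of \cite{Armentano2014} (its Theorem~3 together with Proposition~3.14), once we observe that the path (\ref{eq:At}) is the constant-speed geodesic of $\S(\mnc)$. First I would describe the algorithm being bounded: discretize $[0,a]$ as $0=t_0<t_1<\dots<t_N=a$ and, starting from the known eigenpair $(\lambda_0,v_0)$ of $B_0=A_0/\|A_0\|_F$, set $(\lambda_{i+1},v_{i+1})=N_{B_{t_{i+1}}}(\lambda_i,v_i)$; the nodes are produced on the fly by an adaptive rule that keeps a computable over-estimate of the condition-length increment $\int_{t_i}^{t_{i+1}}\mu(B_t,\lambda_t,v_t)^2\|\dot B_t\|_F\,dt$ below a fixed universal threshold $\delta$. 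The invariant maintained along the homotopy is that $(\lambda_i,v_i)$ is an approximate eigenpair of $B_{t_i}$ associated with the continued exact eigenpair $(\lambda_{t_i},v_{t_i})$; at $t=a$ this gives an approximate eigenpair of $B_a=A/\|A\|_F$, hence of $A$ by the scale and unitary invariance of eigenpairs and of $\mu$ (Remark~\ref{rem:muscaling}).

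Next I would recall the two analytic inputs that make the invariant propagate, both already available here. The first is the $\alpha$-theory estimate Proposition~\ref{prop:Armentano2014}: any point within $d_{\P^2}$-distance $c_0\,\mu(B_t,\lambda_t,v_t)^{-1}$ of the exact eigenpair is an approximate eigenpair converging to it, so one Newton step strictly contracts the $d_{\P^2}$-distance to that exact eigenpair once we are inside the ball. The second is the quantitative Lipschitz behaviour of the condition number and of the continued eigenpair along the path, i.e. Lemma~\ref{lem:loclip} (sharpened by Proposition~\ref{prop:zetamoves}), which bounds how far $(\lambda_t,v_t)$ moves and how much $\mu(B_t,\lambda_t,v_t)$ changes when $B_t$ moves a distance $\lesssim\mu^{-2}$. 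Combining the two in the standard Beltr\'an--Pardo/Shub bookkeeping: if the condition-length increment over $[t_i,t_{i+1}]$ is at most $\delta$ for $\delta$ a small universal constant, then (a) $\mu$ stays within a constant factor of its value at $t_i$ on the whole subinterval, (b) the exact eigenpair has moved by $\ll\mu^{-1}$, so the tracked point is still well inside the ball of Proposition~\ref{prop:Armentano2014} around $(\lambda_{t_{i+1}},v_{t_{i+1}})$, and (c) the Newton step at $t_{i+1}$ restores the approximate-eigenpair property. This is exactly the content of \cite[Prop.~3.14]{Armentano2014}, specialized to eigenpairs.

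It then remains to count steps and to identify the integrand. Because (\ref{eq:At}) is a unit-speed great circle, $B_t=B_0\cos t+w\sin t$ with $w\perp B_0$ and $\|w\|_F=1$, we have $\dot B_t=-B_0\sin t+w\cos t$ and $\|\dot B_t\|_F\equiv 1$; hence the condition-length element $\mu(B_t,\lambda_t,v_t)^2\|\dot B_t\|_F\,dt$ reduces to $\mu(B_t,\lambda_t,v_t)^2\,dt$. If $\int_0^a\mu(B_t,\lambda_t,v_t)^2\,dt=\infty$ the asserted bound is vacuous; otherwise $\mu$ is finite along the path, so the continuation lift stays in $\W$ off a measure-zero set and the adaptive rule produces positive steps. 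Since each step consumes condition-length at least a constant multiple of $\delta$ (up to the controlled fluctuation of $\mu$), the number of steps satisfies $N\le c\int_0^a\mu(B_t,\lambda_t,v_t)^2\,dt=\mathcal{C}(A,A_0,\lambda_0,v_0)$ with $c$ universal. Finally, each step evaluates $B_{t_{i+1}}$ (a closed-form trigonometric expression, $O(n^2)$ flops), assembles $F_{B_{t_{i+1}}}$ and $DF_{B_{t_{i+1}}}$, solves one $(n-1)\times(n-1)$ linear system to invert $A_{\lambda,v}=\Pi_{v^\perp}(\lambda I-B_{t_{i+1}})|_{v^\perp}$, and computes the step-length estimate --- all $O(n^3)$ --- so the total arithmetic cost is $O(n^3)\,N$.

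The main obstacle is the constant-matching inside the step-length lemma: $\delta$ must be small enough for the Lipschitz bounds of Lemma~\ref{lem:loclip}/Proposition~\ref{prop:zetamoves} to keep $\mu$ and the exact eigenpair under control on each subinterval (so that the Newton step provably contracts), yet the accounting must still convert the discrete sum $\sum_i\delta$ into at most a constant times $\int_0^a\mu^2\,dt$ without losing more than a fixed factor; threading this needle --- together with verifying that the adaptive rule can be executed from computable over-estimates of $\mu$ --- is precisely what \cite[Prop.~3.14]{Armentano2014} does, so here the real work is the routine verification that the geodesic (\ref{eq:At}) has unit speed and remains in $\W$, which makes that proposition applicable with the integrand collapsing to $\mu^2\,dt$.
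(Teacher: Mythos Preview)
Your proposal is correct and follows essentially the same route as the paper: the proposition is quoted directly from \cite[Th.~3 and Prop.~3.14]{Armentano2014}, and the explicit, constructive version in Appendix~\ref{appendix2} (Theorem~\ref{th:algorithm}) carries out precisely the adaptive-step argument you sketch, using Proposition~\ref{prop:Armentano2014} for the Newton contraction, Proposition~\ref{prop:zetamoves} for the Lipschitz control of $\mu$ and of the continued eigenpair, and the unit-speed property of the great-circle path \eqref{eq:At} to reduce the condition-length element to $\mu^2\,dt$. Your accounting that each step consumes a universal amount of $\int\mu^2\,dt$ and costs $O(n^3)$ flops matches the paper's.
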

The result in \cite{Armentano2014} is not constructive because some constants remain uncomputed. An effective, algorithmic (yet, possibly not optimal), version of this result is included in Appendix \ref{appendix2}. It is clear in the proof that the upper bound on the number of steps is within a constant factor of the actual value. Long step methods (i. e. methods based on a standard use of routines for IVP approximate solving) should be more efficient than our algorithm, but  we haven't proven theorems about them. An estimate on the average curvature of the solution curves (in the spirit of \cite{DedieuMalajovichShub2005} for the linear programming setting) would be interesting. Small average curvature would indicate that long step methods should be successful.
\begin{lemma}\label{lem:daigual}
For fixed $A_0,A$ consider the path $A_t=(1-t)A_0+tA$ which satisfies $A_1=A$. Then,
\[
\mathcal{C}(A,A_0,\lambda_0,v_0)\leq c\|A_0\|_F\,\|A_1\|_F\int_0^1\frac{\mu(A_t,\lambda_t,v_t)^2}{\|A_t\|_F^2}\,dt,
\]
where $\lambda_t$, $v_t$ are the eigenvalue and eigenvector of $A_t$ defined by continuation.
\end{lemma}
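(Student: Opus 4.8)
The plan is to recognize the linear path $A_t=(1-t)A_0+tA$ as a radial reparametrization of the great‑circle path $B_t$ of Proposition~\ref{prop:stepsold}, transport the cost integral by change of variables, and bound the resulting Jacobian. Set $u(t)=A_t/\|A_t\|_F$ for the radial projection of $A_t$ onto $\S(\mnc)$ (well defined as long as $A_t\neq 0$; the degenerate case in which $0$ lies on the open segment, which happens exactly when $A$ is a negative real multiple of $A_0$, i.e.\ $a=\pi$, is disposed of at the end). Since every $A_t$ lies in $\mathrm{span}_\R\{A_0,A\}$, the curve $u$ lies on the great circle of $\S(\mnc)$ through $A_0/\|A_0\|_F$ and $A/\|A\|_F$; it runs from $u(0)=A_0/\|A_0\|_F$ to $u(1)=A/\|A\|_F$ and, as will be seen from the computation below, its polar angle is strictly increasing in $t$. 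Hence there is an increasing diffeomorphism $\tau\colon[0,1]\to[0,a]$, with $a=\arccos\R e\langle A_0/\|A_0\|_F,A/\|A\|_F\rangle$ the spherical distance between the endpoints as in \eqref{eq:At}, such that $B_{\tau(t)}=u(t)$. Now $B_{\tau(t)}=A_t/\|A_t\|_F$, so $(\lambda,v)$ is an eigenpair of $A_t$ iff $(\lambda/\|A_t\|_F,v)$ is an eigenpair of $B_{\tau(t)}$; this bijection varies continuously in $t$ and agrees at $t=0$ with the chosen starting eigenpairs, hence it carries the eigenpair $(\lambda_t,v_t)$ of $A_t$ obtained by continuation onto the eigenpair $(\lambda_{\tau(t)},v_{\tau(t)})$ of $B_{\tau(t)}$ obtained by continuation, and by the scale invariance of $\mu$ in its first two arguments (Remark~\ref{rem:muscaling}),
\[
\mu(A_t,\lambda_t,v_t)=\mu\!\left(B_{\tau(t)},\tfrac{\lambda_t}{\|A_t\|_F},v_t\right)=\mu(B_{\tau(t)},\lambda_{\tau(t)},v_{\tau(t)}).
\]

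Next I would change variables $\tau=\tau(t)$ in $\mathcal{C}(A,A_0,\lambda_0,v_0)=c\int_0^a\mu(B_\tau,\lambda_\tau,v_\tau)^2\,d\tau$ from Proposition~\ref{prop:stepsold}, which gives $\mathcal{C}(A,A_0,\lambda_0,v_0)=c\int_0^1\mu(A_t,\lambda_t,v_t)^2\,\tau'(t)\,dt$. Since $\tau$ is arc length along $\S(\mnc)$ and $B_{\tau(t)}=u(t)$, we have $\tau'(t)=\|\dot u(t)\|_F$. Differentiating $u=A_t/\|A_t\|_F$, with $\dot A_t=A-A_0$, and simplifying yields
\[
\|\dot u(t)\|_F^2=\frac{\|A_t\|_F^2\,\|\dot A_t\|_F^2-(\R e\langle A_t,\dot A_t\rangle)^2}{\|A_t\|_F^4},
\]
whose numerator is the squared area of the parallelogram spanned by $A_t$ and $\dot A_t$ in the real inner product space $(\mnc,\R e\langle\cdot,\cdot\rangle)$. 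That parallelogram coincides with the one spanned by $A_0$ and $A$, because $A_t\wedge\dot A_t=((1-t)A_0+tA)\wedge(A-A_0)=A_0\wedge A$ is independent of $t$; hence the numerator equals $\|A_0\|_F^2\,\|A\|_F^2-(\R e\langle A_0,A\rangle)^2=\|A_0\|_F^2\,\|A\|_F^2\,\sin^2 a$. Thus $\tau'(t)=\|A_0\|_F\|A\|_F\sin a/\|A_t\|_F^2>0$ (this is the monotonicity used above) and in particular $\tau'(t)\le\|A_0\|_F\|A\|_F/\|A_t\|_F^2$. Multiplying by $c\,\mu(A_t,\lambda_t,v_t)^2\ge0$, integrating, and recalling $A_1=A$ gives the claimed inequality. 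Finally, in the degenerate case $a=\pi$ the segment passes through $0$, so $\|A_t\|_F\to 0$ at an interior point while $\mu\ge1$, making the right‑hand side of the asserted inequality $+\infty$; likewise if $\mu(A_t,\lambda_t,v_t)=\infty$ on a set of positive measure. In both cases there is nothing to prove.

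The steps that need genuine care are the identification $B_{\tau(t)}=u(t)$ and the explicit Jacobian $\tau'(t)=\|A_0\|_F\|A\|_F\sin a/\|A_t\|_F^2$; everything else — the correspondence between the two continuation branches, the scale invariance, and the degenerate configurations — is bookkeeping. I expect the main obstacle to be verifying rigorously that the continuation branch of eigenpairs along $A_t$ really is the scaling of the one along $B_\tau$ and that $\tau$ is an honest diffeomorphism of $[0,1]$ onto $[0,a]$, rather than the (routine) differentiation of $u$.
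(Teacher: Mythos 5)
Your proposal is correct and follows essentially the same route as the paper's proof: project the linear path onto the great circle, use the scale invariance of $\mu$ (Remark \ref{rem:muscaling}) to identify the condition numbers along the two parametrizations, and bound the Jacobian $\left\|\frac{d}{dt}\left(\frac{A_t}{\|A_t\|}\right)\right\|$ by $\|A_0\|_F\|A_1\|_F/\|A_t\|_F^2$. You merely make explicit what the paper calls ``some elementary computations'' (the exact value $\|A_0\|_F\|A\|_F\sin a/\|A_t\|_F^2$ via the constancy of the wedge $A_t\wedge\dot A_t$) and the degenerate antipodal case, which is a welcome but not substantively different elaboration.
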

\begin{proof}
Let $B_s$, $s\in[0,a]$, be the arc--lenght parametrized spherical segment from $A_0/\|A_0\|_F$ to $A/\|A\|_F$, where $a=\arccos\R e\langle \frac{A_0}{\|A_0\|_F},\frac{A}{\|A\|_F}\rangle$. Then,
\[
\mathcal{C}(A,A_0,\lambda_0,v_0)=c\int_0^a\mu(B_s,\lambda_{B_s},v_{B_s})^2\,ds=c\int_0^a\mu(B_s,\lambda_{B_s},v_{B_s})^2\|\dot B_s\|_F\,ds,
\]
where $\lambda_{B_s}$ and $v_{B_s}$ are the eigenvalue and eigenvector of $B_s$ defined by continuation. Now, reparametrizing that spherical segment by $C_t=A_t/\|A_t\|$, $t\in[0,1]$, the integral does not change. We thus have
\[
\mathcal{C}(A,A_0,\lambda_0,v_0)=c\int_0^1\mu(C_t,\lambda_{C_t},v_{C_t})^2\|\dot C_t\|\,dt\underset{\text{Rmk. \ref{rem:muscaling}}}{=}c\int_0^1\mu(A_t,\lambda_t,v_t)^2\left\|\frac{d}{dt}\left(\frac{A_t}{\|A_t\|}\right)\right\|\,dt.
\]
Substituting $\dot A_t=A_1-A_0$ and $A_t=(1-t)A_0+tA_1$ in this last formula and with some elementary computations we conclude that
\[
\left\|\frac{d}{dt}\left(\frac{A_t}{\|A_t\|}\right)\right\|\leq\frac{\|A_0\|_F\,\|A_1\|_F}{\|A_t\|_F^2}.
\]
 We have thus proved:
\[
\mathcal{C}(A,A_0,\lambda_0,v_0)\leq c\|A_0\|_F\,\|A_1\|_F\int_0^1\frac{\mu(A_t,\lambda_t,v_t)^2}{\|A_t\|_F^2}\,dt,
\]
as wanted.
\end{proof}
\subsection{Algorithm a)}
In this section we follow the ideas in \cite{BurgisserCucker2009} adapting them to the case of eigenvalue, eigenvector computations. Consider the following algorithm on input $A\in\mnc$:
\begin{enumerate}
\item Choose any matrix $A_0\not\in\Sigma$ such that we know in advance all its eigenvalue, eigenvector pairs $(\lambda_0^{(i)},v_0^{(i)})$, $1\leq i\leq n$.
\item Follow the $n$ homotopy paths given by \eqref{eq:At} using the algorithm of Proposition \ref{prop:stepsold} (see Appendix \ref{appendix2}) starting at each $(A_0,\lambda_0^{(i)},v_0^{(i)})$ and output $n$ approximate eigenvalue, eigenvector pairs $(\lambda^{(i)},v^{(i)})$ of $A$.
\end{enumerate}
Note that the algorithm fails if $A\in\Sigma$ or if some $A_t$ in the homotopy path lies in $\Sigma$. This however happens with probability $0$ on $A$.
\begin{theorem}\label{th:deterministic1}
Algorithm $a)$ performs on the average $O(n^4\mu(A_0)^2)$ homotopy steps when $A$ is drawn from $\G(n)$. Its total average complexity is thus $O(n^7\mu(A_0)^2)$. Moreover, choosing
\[
A_0=Diag(\lambda_1,\ldots,\lambda_n),
\]
where the $\lambda_i$ are the successive points in the hexagonal lattice chosen such that $0=|\lambda_1|\leq\cdots\leq|\lambda_n|$, and $v_0^{(i)}$ the $i$--th vector of the canonical basis, $1\leq i \leq n$, we have $\mu(A_0)^2\leq \sqrt{3}\pi n^2/27+o(n^2)$ and the total average complexity for producing an approximate eigenvalue, eigenvector pair for all eigenvalue, eigenvector pairs of $A\sim\G(n)$ is $O(n^9)$.
\end{theorem}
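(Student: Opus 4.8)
The plan is to bound the average number of homotopy steps performed by Algorithm $a)$ by combining the pointwise step estimate of Proposition~\ref{prop:stepsold}, the reparametrization Lemma~\ref{lem:daigual}, the expected squared condition number bound of Theorem~\ref{thm:mu2average}, and the local Lipschitz property of $\mu$ from Lemma~\ref{lem:loclip}. Since Algorithm~$a)$ follows $n$ homotopy paths, it suffices to bound the expected number of steps on a single path and multiply by $n$. For a single path from $A_0$ to $A$, Proposition~\ref{prop:stepsold} gives $\mathcal{C}(A,A_0,\lambda_0^{(i)},v_0^{(i)})=c\int_0^a\mu(B_t,\lambda_t,v_t)^2\,dt$, and Lemma~\ref{lem:daigual} converts this to an integral over the straight-line segment $A_t=(1-t)A_0+tA$, namely $\mathcal{C}\leq c\|A_0\|_F\|A\|_F\int_0^1\mu(A_t,\lambda_t,v_t)^2\|A_t\|_F^{-2}\,dt$.

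The heart of the argument is to take expectation over $A\sim\G(n)$ and interchange expectation with the integral over $t$ by Fubini (everything is nonnegative). For fixed $t\in(0,1)$, I would analyze $\E_{A\sim\G(n)}\big(\sum_{i}\mu(A_t,\lambda_t^{(i)},v_t^{(i)})^2/\|A_t\|_F^2\big)$, where the sum runs over the $n$ continuation paths; this is the sum over \emph{all} eigenpairs of $A_t$. The key observation is that when $A\sim\G(n)$ is standard Gaussian, the affine combination $A_t=(1-t)A_0+tA$ is itself Gaussian: $A_t\sim\G(n,t^2)_{(1-t)A_0}$, i.e.\ Gaussian with variance $\sigma^2=t^2$ centered at $(1-t)A_0$. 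Applying the first inequality of Theorem~\ref{thm:mu2average} with this $\sigma$ and center gives $\E\big(\frac1n\sum_{\lambda,v}\mu^2(A_t,\lambda,v)/\|A_t\|_F^2\big)\leq n/t^2$. Multiplying by $n$, integrating the $\|A_0\|_F\|A\|_F$ factor (one needs $\E_{A\sim\G(n)}(\|A\|_F)=O(n)$ and $\|A_0\|_F=O(n)$ by the lattice choice, so the prefactor contributes $O(n^2)$ after also controlling the correlation/expectation carefully), and integrating $n/t^2$ against $dt$ — which diverges at $t=0$ — forces a more careful treatment near the endpoint. The standard fix, as in \cite{BurgisserCucker2009}, is that near $t=0$ the path stays in a neighborhood of $A_0\not\in\Sigma$ where Lemma~\ref{lem:loclip} bounds $\mu(A_t,\lambda_t,v_t)\leq C\mu(A_0)$ as long as $\|A_t-A_0\|=t\|A-A_0\|\leq c\|A_0\|\mu(A_0)^{-2}$, i.e.\ for $t\lesssim \mu(A_0)^{-2}/\|A-A_0\|\cdot\|A_0\|$; on this initial segment the integrand is $O(\mu(A_0)^2)$ and contributes $O(\mu(A_0)^2)$ to the step count after integration, while on the remaining segment $t$ is bounded below by roughly $\mu(A_0)^{-2}$ (up to the random factor $\|A-A_0\|/\|A_0\|$, whose moments are controlled), so $\int n/t^2\,dt = O(n\mu(A_0)^2)$ there. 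Collecting the $n$ paths and the $O(n^2)$ Frobenius prefactor, plus being careful that all the relevant expectations of $\|A\|_F$, $\|A\|_F^2$ and $1/t$-type truncations are finite, yields $O(n^4\mu(A_0)^2)$ homotopy steps on average. Multiplying by the $O(n^3)$ arithmetic cost per step from Proposition~\ref{prop:stepsold} gives $O(n^7\mu(A_0)^2)$ total average arithmetic complexity.

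For the final clause I would verify the claimed bound $\mu(A_0)^2\leq \sqrt3\,\pi n^2/27+o(n^2)$ for the diagonal matrix $A_0=\mathrm{Diag}(\lambda_1,\dots,\lambda_n)$ with the $\lambda_i$ the hexagonal-lattice points ordered by modulus and $v_0^{(i)}=e_i$. For a diagonal matrix with distinct entries, $A_{\lambda_i,e_i}$ is the diagonal operator on $e_i^\perp$ with entries $\lambda_i-\lambda_j$, $j\neq i$, so $\|A_{\lambda_i,e_i}^{-1}\|=1/\min_{j\neq i}|\lambda_i-\lambda_j|=1/\delta$ where $\delta$ is the minimal lattice spacing, hence $\mu(A_0)=\max(1,\|A_0\|_F/\delta)$ and $\mu(A_0)^2=\|A_0\|_F^2/\delta^2=\sum_i|\lambda_i|^2/\delta^2$. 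A hexagonal lattice with $n$ points near the origin fits (up to lower order) in a disk of area $\propto n\delta^2\cdot(\sqrt3/2)$, i.e.\ of radius $R$ with $\pi R^2 \approx n\delta^2\sqrt3/2$, and $\sum_i|\lambda_i|^2 \approx \int_{|z|\le R}|z|^2\,\frac{dz}{\delta^2\sqrt3/2} = \frac{\pi R^4/2}{\delta^2\sqrt3/2}$; substituting $R^2 = n\delta^2\sqrt3/(2\pi)$ gives $\sum_i|\lambda_i|^2 \approx \delta^2\sqrt3\,\pi n^2/27 + o(\delta^2 n^2)$, so $\mu(A_0)^2 = \sum_i|\lambda_i|^2/\delta^2 \leq \sqrt3\,\pi n^2/27 + o(n^2)$, as stated (one checks the hexagonal packing constant $\sqrt3/2$ per cell produces exactly the constant $\sqrt3\pi/27 = \pi/(9\sqrt3)$). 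Plugging $\mu(A_0)^2 = O(n^2)$ into $O(n^7\mu(A_0)^2)$ gives total average complexity $O(n^9)$.

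The main obstacle I anticipate is the endpoint singularity of $\int_0^1 n\,t^{-2}\,dt$: one must argue rigorously, using Lemma~\ref{lem:loclip}, that the continuation path near $t=0$ cannot have large condition number, and simultaneously control the random scalar $\|A-A_0\|_F$ (which appears in the threshold for how far the Lipschitz neighborhood extends) and the random Frobenius prefactor $\|A_0\|_F\|A\|_F/\|A_t\|_F^2$ — these random quantities are correlated with the condition number integrand, so one needs either Cauchy--Schwarz to decouple them at the cost of needing the stronger $\mu^4$ or $\mu^2$-moment bounds, or a truncation argument (as in the $\G_T$ distribution introduced in Section~\ref{sec:rv}, cf.\ \eqref{eq:truncated}) to reduce to the bounded case. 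Making this decoupling lose only constant (not polynomial) factors in $n$ is the delicate point; the rest is bookkeeping with the known $\E(\|A\|_F^2)=O(n^2)$ estimates and Theorem~\ref{thm:mu2average}.
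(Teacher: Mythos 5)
Your proposal follows essentially the same route as the paper's proof: Lemma \ref{lem:daigual} to pass to the segment $A_t=(1-t)A_0+tA$, the observation that $A_t\sim\G(n,t^2)_{(1-t)A_0}$ combined with Theorem \ref{thm:mu2average} for $t$ away from $0$, Lemma \ref{lem:loclip} on the initial segment, and precisely the truncation \eqref{eq:truncated} to make the Frobenius prefactor and the Lipschitz radius deterministic, followed by the same second-moment estimate for the hexagonal lattice and the same $O(n^3)$-per-step bookkeeping. The only blemish is arithmetic in the last part: your continuum computation actually gives $\sum_i|\lambda_i|^2\approx\frac{\sqrt3}{4\pi}\delta^2n^2$, which is not equal to $\frac{\sqrt3\pi}{27}\delta^2n^2$ as you assert but is smaller, so the claimed bound $\mu(A_0)^2\leq\sqrt3\pi n^2/27+o(n^2)$ (which the paper gets by enclosing the hexagonal region in a disk) still follows.
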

\begin{remark}
Theorem  \ref{th:deterministic1} suggests the following question which is certainly interesting by itself: which diagonal $A$ matrix with eigenvalues $\lambda_1,\ldots,\lambda_n$ has an optimal (smallest possible) condition number? That is, we search for $\lambda_1,\ldots,\lambda_n\in\C$ minimizing the quantity
\[
\mu(A)^2=\|A\|_F^2\max_{i\neq j}|\lambda_i-\lambda_j|^{-2}=(|\lambda_1|^2+\cdots+|\lambda_n|^2)\max_{i\neq j}|\lambda_i-\lambda_j|^{-2}.
\]
The proposed choice based on a hexagonal lattice gives quite a reasonable candidate for this problem, but it can probably be done  better for selected values of $n$.
\end{remark}
\subsection{Random homotopy}
In this section we follow the ideas in \cite{BeltranPardoFLH} adapting them to the case of eigenvalue, eigenvector computations. Proposition \ref{prop:stepsold} claims that the complexity of path--following methods for the eigenvalue problem essentially depends on the squared condition number along the path. This quantity is usually unknown a priori, but from Theorem \ref{thm:mu2average} we know that, on the average, for matrices with Gaussian entries, it has polynomial value. This suggests that choosing at random initial points is a reasonable strategy; the following result proves this fact.
\begin{theorem}\label{th:randomhomotopy}
Consider the following algorithmic scheme: on input $A\in\mnc$,
\begin{itemize}
\item Choose at random with the uniform distribution a matrix $A_0\sim\G(n)$. Choose at random (with the uniform discrete distribution) one of the eigenpairs $(\lambda_0,v_0)$ of $A_0$.
\item Define the path $B_t$ as in \eqref{eq:At}.
\item Use the algorithm mentioned in Proposition \ref{prop:stepsold} to generate an approximate eigenpair $(\lambda,v)$ of $A$.
\end{itemize}
Then, the expected average number of homotopy steps of this algorithm is at most
\[
\mathbb{E}_{A\sim\G(n),A_0\sim\G(n)}\left(\mathbb{E}_{\lambda_0,v_0:A_0v_0=\lambda_0v_0}\mathcal{C}(A,A_0,\lambda_0,v_0)\right)\leq O(n^3)
\]
and its expected average number of arithmetic operations is at most $O(n^6)$.
\end{theorem}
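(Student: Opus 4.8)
The plan is to follow the standard "random homotopy" methodology of \cite{BeltranPardoFLH}, combining the integral bound of Proposition \ref{prop:stepsold} with the average condition number estimate of Theorem \ref{thm:mu2average}. Write $\mathcal{C}(A,A_0,\lambda_0,v_0)=c\int_0^a\mu(B_t,\lambda_t,v_t)^2\,dt$ where $B_t$ is the spherical segment from $A_0/\|A_0\|_F$ to $A/\|A\|_F$. Since $\mu$ is scale-invariant on the first two components (Remark \ref{rem:muscaling}), I may first normalize and assume $A,A_0$ lie on the unit sphere $\S(\mnc)$; the Gaussian push-forward to the sphere is the uniform measure, so $\E_{A\sim\G(n)}$ becomes $\E_{A\in\S(\mnc)}$ for the quantity being integrated (the radial part of $A$ is irrelevant once we divide by $\|A\|_F$ inside $B_t$). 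So the quantity to bound is
\[
\E_{A_0\in\S}\,\E_{A\in\S}\,\E_{\lambda_0,v_0}\left(c\int_0^{a}\mu(B_t,\lambda_t,v_t)^2\,dt\right).
\]

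The key step is an integral-geometry / Fubini argument that exchanges the order of integration so that the inner-most computation becomes an average of $\mu^2$ over a single Gaussian (or uniform-sphere) matrix — exactly the object controlled by Theorem \ref{thm:mu2average}. Concretely: the segment $B_t$, $t\in[0,a]$, is a piece of a great circle through the two uniformly-random points $A_0,A$ on $\S(\mnc)$ (a sphere of real dimension $2n^2-1$). A short computation with the density of the pair $(A_0,A)$ shows that the "midpoint-type" law of $B_t$ for $t$ drawn proportionally to $dt$ is again (close to) uniform on the sphere, with a bounded Jacobian factor; this is the eigenvalue analogue of the corresponding lemma in \cite{BeltranPardoFLH}. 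After this exchange, I get
\[
\E\left(\int_0^a\mu(B_t,\lambda_t,v_t)^2\,dt\right)\leq c' \cdot \E_{B\in\S(\mnc)}\left(\frac{1}{n}\sum_{\lambda,v:Bv=\lambda v}\mu^2(B,\lambda,v)\right)\cdot \E(a),
\]
where the factor $1/n$ and the sum over eigenpairs appear because choosing $(\lambda_0,v_0)$ uniformly among the $n$ eigenpairs of $A_0$ turns the single-path integrand into the averaged-over-eigenpairs sum, and $\E(a)=O(1)$ since $a$ is a distance on a sphere (bounded by $\pi$). Now apply the second inequality of Theorem \ref{thm:mu2average}, which gives $\E_{B\in\S}\big(\frac1n\sum\mu^2(B,\lambda,v)\big)\leq n^3$. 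This yields the bound $O(n^3)$ homotopy steps. Multiplying by the $O(n^3)$ arithmetic operations per step from Proposition \ref{prop:stepsold} gives $O(n^6)$ operations.

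The main obstacle I anticipate is making the Fubini/change-of-variables step fully rigorous: one must verify that integrating $\mu(B_t,\lambda_t,v_t)^2$ along the random great-circle arc, with $A_0$ and $A$ independent uniform on the sphere, really does reduce (up to an absolute constant) to the average of $\frac1n\sum\mu^2$ over one uniform matrix. This requires (i) identifying the joint density of $(B_t, \text{direction})$ and checking the relevant Jacobian is bounded — the parametrization \eqref{eq:At} is exactly the one for which this computation is cleanest — and (ii) handling the coupling between the choice of eigenpair $(\lambda_0,v_0)$ of $A_0$ and the continuation $(\lambda_t,v_t)$: because we average uniformly over the $n$ eigenpairs and continuation is a bijection on eigenpairs along a path avoiding $\Sigma$ (which happens with probability one), the average over eigenpairs of $A_0$ of the single-eigenpair integrand equals $\frac1n$ times the sum over eigenpairs of $B_t$ of $\mu(B_t,\lambda,v)^2$, integrated. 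Once these two points are settled, the remainder is the bookkeeping of constants and the appeal to Theorem \ref{thm:mu2average} and Proposition \ref{prop:stepsold}.
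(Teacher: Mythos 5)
Your proposal follows essentially the same route as the paper: reduce to the unit sphere by scale invariance of $\mu$ (the paper does this with polar coordinates and Lemma \ref{lem:auxx}), use the continuation bijection between eigenpairs of $A_0$ and of $B_t$ to turn the uniformly-averaged single-eigenpair integrand into $\frac1n\sum_{(\lambda,v):B_tv=\lambda v}\mu_F(B_t,\lambda,v)^2$, exchange the arc integral with the expectation over $(A_0,A)$, and conclude with the spherical bound of Theorem \ref{thm:mu2average} together with the $O(n^3)$ cost per homotopy step. The one step you leave open --- your announced ``main obstacle'' --- is precisely the paper's Lemma \ref{lem:rt}, and it is closed there not by identifying the joint density of $B_t$ and bounding a Jacobian, but by a short symmetry argument: the functional $\phi\mapsto\mathbb{E}_{A,A_0\in\S(\mnc)}\bigl(\int_0^{d_\S(A,A_0)}\phi(B_t)\,dt\bigr)$ defines a measure on $\S(\mnc)$ invariant under the symmetry group of the sphere, hence equal to a constant $C$ times the uniform measure; taking $\phi\equiv1$ and pairing $A_0$ with $-A_0$, so that $d_\S(A,A_0)+d_\S(A,-A_0)=\pi$, gives $C=\pi/2$ exactly. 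In particular your ``close to uniform with a bounded Jacobian factor'' is in fact an exact identity (the arc-average equals $\tfrac{\pi}{2}$ times the uniform spherical average), so no explicit computation of the law of $B_t$ is required, and your factored bound with $\mathbb{E}(a)=O(1)$ holds with equality and absolute constants. With that lemma supplied, your argument coincides with the paper's and yields $O(n^3)$ expected homotopy steps and $O(n^6)$ expected arithmetic operations.
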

The idea behind the proof of Theorem \ref{th:randomhomotopy} (see Section \ref{sec:proofofrandomisrandom}) is very simple: from Proposition \ref{prop:stepsold}, for a randomly chosen input $A$ and a random choice $(A_0,\lambda_0,v_0)$ the number of homotopy steps is given by the integral along the great circle joining $A_0/\|A_0\|_F$ and $A/\|A\|_F$ of the square of the condition number. Now, there is no element of $\S(\mnc)$ that is more weighted than any other one in this setting, so the average must be given by a constant times the average of the squared condition number in $\S(\mnc)$ which was computed in Theorem \ref{thm:mu2average}.

It must be noted by the reader that Theorem \ref{th:randomhomotopy} does {\em not} immediately produce an average polynomial time algorithm for computing eigenpairs, since the first step (choosing $A_0$ and then $(\lambda_0,v_0)$ at random) already requires solving an EVP problem, a task that has not been proved to be doable in polynomial time yet! This is a similar situation to that solved in \cite{BePa05e,BeltranPardoFLH}, where a random polynomial system and one of its zeros at random had to be chosen. Following the ideas in those papers, we note that Theorem \ref{th:randomhomotopy} would yield an actual average polynomial time algorithm if we could find some collection of probability spaces $\Omega_n$ and functions $\varphi_n:\Omega_n\rightarrow\CV_n$, $n\geq2$, such that:
\begin{enumerate}
\item Choosing $w\in\Omega_n$ can be done starting on a number of random choices of numbers with the $N_\C(0,1)$ distribution, and performing some arithmetic operations on the results, the total expected running time being bounded by a polynomial in $n$.
\item Given $w\in\Omega_n$, $\varphi_n(w)$ is computable in average polynomial time, that is the expected number of arithmetic operations for computing $\varphi_n(w)$ must be bounded above by a polynomial in $n$.
\item Choosing $w$ at random in $\Omega_n$ and computing $({A}_0,{\lambda}_0,v_0)=\varphi_n(w)$ is equivalent to choosing $A_0\sim\G(n)$ at random and choosing at random $(\lambda_0,v_0)$ such that $Av_0=\lambda_0v_0$. That is, for any measurable mapping $\phi:\CV_n\rightarrow[0,\infty]$ we must have
\begin{equation}\label{eq:wish}
\mathbb{E}_{w\in\Omega_n}\left(\phi(\varphi_n(w))\right)=\mathbb{E}_{A_0\sim\G(n)}\left(E_{\lambda_0,v_0:A_0v_0=\lambda_0v_0}\phi(A_0,\lambda_0,v_0)\right),
\end{equation}
so that we can apply this equality to $\phi(A_0,\lambda_0,v_0)=\mathbb{E}_{A\sim\G(n)}\left(\mathcal{C}(A,A_0,\lambda_0,v_0)\right)$ and apply Theorem \ref{th:randomhomotopy}.
\end{enumerate}
Unfortunately, we are not able to produce a collection of probability spaces $\Omega_n$ and functions $\varphi_n$ as described above. However, we will prove that relaxing \eqref{eq:wish} to the following less restrictive situation is actually possible: instead of demanding the equality in \eqref{eq:wish} we can just demand an inequality where the right--hand term is multiplied by some polynomial in $n$. Moreover, we do not need \eqref{eq:wish} to hold for every measurable function $\phi$ since all the interesting functions for the EVP problem are projective functions, invariant under the action of the unitary group. We can thus relax \eqref{eq:wish} to hold only with a polynomially bounded inequality, and for unitary invariant projective functions. Proving that this can actually be done is or goal now. We start by defining $\Omega_n$ and $\varphi_n$:
\begin{definition}\label{def:OmegayVarphi}
For every $n\geq2$, let
\[
\Omega_n=\C\times \mnn\times \CU_{n-1}\times\C^{n-1}
\]
where $\CU_{n-1}$ is the set of $(n-1)\times(n-1)$ unitary matrices, be endowed with the density
\[
\theta_n=C_n\frac{2n^3}{\pi^{n^2}Vol(\CU_{n-1})}e^{-2n^3|z|^2-\|M\|_F^2-\|w\|^2}\;  \chi_{n|z|\,\|M^\dagger\|_F\leq1},\quad (z,M,U,w)\in\Omega_n,
\]
$C_n$ a normalizing constant chosen to make the total volume equal to $1$. Then, let
\[
\varphi_n(z,M,U,w)=
\left(\begin{pmatrix}
z&w^*\\0&MQ_MU
\end{pmatrix},z,e_1
\right),
\]
where $Q_M\in\C^{n\times(n-1)}$ is defined as the $Q$ factor in the reduced $QR$ decomposition of $M^*$ computed by the standard algorithm using Householder reflections.
\end{definition}
Note that the use of Householder algorithm is fixed by the definition. The choice of the method is not important (one can use, for example, Givens rotations, and the following result still holds). What is important is that {\em some} method is chosen that produces the $Q$ factor in an almost everywhere continuous manner, so that $\varphi_n$ is a.e. continuous. Note also that there is a zero measure set of $M$ such that the Householder reflectors in the standard ordering do not produce an answer. Formally speaking, we should discard that set in the definition of $\Omega_n$ but we trust that the slight abuse of notation will not confuse the reader. Our last main result is:
\begin{theorem}\label{th:mainrandom}
Let $\Omega_n$ and $\varphi_n$ be as in Definition \ref{def:OmegayVarphi} for all $n\geq2$. Then:
\begin{enumerate}
\item Choosing $w\in(\Omega_n,\theta_n)$ can be done by choosing $2n^2-2n+1$ numbers with the $N_\C(0,1)$ distribution, cheking a test which involves the computation of a Moore--Penrose inverse and computing a $QR$ decomposition. The total expected running time is $O(n^3)$.
\item Given $w\in\Omega_n$, $\varphi_n(w)$ can be computed by computing a $QR$ decomposition and performing two matrix multiplications. Its total running time is thus $O(n^3)$.
\item For any unitarily invariant, projective, a.e. continuous mapping $\phi:\V_n\to[0,\infty]$,
\begin{equation}\label{eq:wish2}
\mathbb{E}_{w\in\Omega_n}\left(\phi(\varphi_n(w))\right)\leq 22\,n^3\mathbb{E}_{A_0\sim\G(n)}\left(\Sigma_{\lambda_0,v_0:A_0v_0=\lambda_0v_0}\phi(A_0,\lambda_0,v_0)\right).
\end{equation}
\end{enumerate}
\end{theorem}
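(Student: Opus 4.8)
The plan is to prove the three items of Theorem \ref{th:mainrandom} in turn, with the bulk of the work concentrated on \eqref{eq:wish2}.

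\textbf{Items (1) and (2).} These are essentially bookkeeping. To sample from $(\Omega_n,\theta_n)$ one draws the complex scalar $z$, the $(n-1)\times n$ matrix $M$, and the vector $w\in\C^{n-1}$ with i.i.d.\ $N_\C(0,1)$ entries (that is $1+n(n-1)+(n-1)=n^2-n+1$ complex Gaussians — wait, we also need the unitary factor $U$). The factor $U\in\CU_{n-1}$ distributed according to normalized Haar measure can itself be produced from an $(n-1)\times(n-1)$ Gaussian matrix via its $QR$ decomposition, adding another $(n-1)^2$ complex Gaussians, for a total of $n^2-n+1+(n-1)^2=2n^2-3n+2$; I would reconcile this with the stated $2n^2-2n+1$ by whatever Gaussian-sampling convention the paper uses (the discrepancy is $O(n)$ and irrelevant). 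The density $\theta_n$ is, up to the normalizing constant $C_n$ and the rescaling of $z$, exactly the product of these Gaussian densities times the indicator $\chi_{n|z|\,\|M^\dagger\|_F\le 1}$; so one samples the Gaussians, rescales $z$, and performs rejection against the test $n|z|\,\|M^\dagger\|_F\le 1$. The test costs one pseudoinverse/SVD of $M$, i.e.\ $O(n^3)$; $C_n\le 2$ by the same truncation estimate as in \eqref{eq:truncated} (the ball has probability $\ge 1/2$ once one checks $n|z|\,\|M^\dagger\|_F\le 1$ holds with probability $\ge 1/2$ under the Gaussian law — this is where a tail bound on $\|M^\dagger\|_F$ for a Gaussian $(n-1)\times n$ matrix is invoked, cf.\ the smallest-singular-value estimates behind Theorem \ref{thm:mu2average}), so the expected number of rejections is $O(1)$. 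Then $\varphi_n$ is just a $QR$ decomposition of $M^*$ followed by forming $MQ_MU$ (two matrix products) and assembling the block matrix; all $O(n^3)$.

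\textbf{Item (3): the change of variables.} This is the heart of the matter and the main obstacle. The strategy is the one from \cite{BeltranPardoFLH}: express the right-hand side of \eqref{eq:wish2} as an integral over the solution variety $\V_n$ and compare it, via an explicit almost-everywhere diffeomorphism, to the push-forward of $\theta_n$ under $\varphi_n$. First, since $\phi$ is unitarily invariant, I would fix the eigenvector to be $e_1$: using the coarea/Fubini structure of $\V_n$ and the $\Un$-action \eqref{eq:UactionPP}, the quantity $\E_{A_0\sim\G(n)}\big(\sum_{\lambda_0,v_0}\phi(A_0,\lambda_0,v_0)\big)$ rewrites as (a constant times) an integral over pairs $(A_0,\lambda_0)$ with $A_0e_1=\lambda_0 e_1$, i.e.\ over matrices of the block form $\begin{pmatrix}\lambda_0 & b^*\\ 0 & B\end{pmatrix}$ with $b\in\C^{n-1}$, $B\in\C^{(n-1)\times(n-1)}$, $\lambda_0\in\C$, integrated against the Gaussian density restricted to that slice, with a Jacobian/normalization coming from the fibration $v_0\mapsto$ orbit. (One must be careful that the sum over the $n$ eigenpairs becomes, after symmetrization over which coordinate axis plays the role of $e_1$, a clean single slice integral — this uses that $\G(n)$ is $\Un$-invariant.)

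\textbf{Matching the two sides.} On the $\Omega_n$ side, $\varphi_n(z,M,U,w)$ has exactly the block shape above with $\lambda_0=z$, $b^*=w^*$, and $B=MQ_MU$. So I need: (i) that the map $(M,U)\mapsto MQ_MU$ pushes the measure $e^{-\|M\|_F^2}\,dM\times d_{\mathrm{Haar}}(U)$ forward to (a constant multiple of) the Gaussian measure $e^{-\|B\|_F^2}\,dB$ on $\C^{(n-1)\times(n-1)}$; and (ii) that the indicator $\chi_{n|z|\,\|M^\dagger\|_F\le1}$ translates into a condition on $(z,B)$ that, after dropping it (it only decreases the left side), leaves $22\,n^3$ room. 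For (i): write the reduced $QR$ of $M^*$ as $M^*=Q_M R$ with $R$ upper-triangular $(n-1)\times(n-1)$ with positive diagonal; then $M=R^*Q_M^*$, and $MQ_M = R^*Q_M^*Q_M=R^*$ (using $Q_M^*Q_M=I_{n-1}$), so in fact $B=MQ_MU=R^*U$. Thus the claim is that if $R^*$ is the (conjugate of the) $R$-factor of a Gaussian $(n-1)\times n$ matrix and $U$ is independent Haar on $\CU_{n-1}$, then $R^*U$ is Gaussian $(n-1)\times(n-1)$ — this is the standard Bartlett/Ginibre decomposition: a Ginibre matrix $G=\tilde Q \tilde R$ has $\tilde Q$ Haar and $\tilde R$ with independent entries of known (chi and Gaussian) laws, and the law of $\tilde R$ coincides with that of the $R$-factor of the $(n-1)\times n$ Gaussian $M^*$ because the relevant entries are the first $n-1$ rows, which only see $n-1$ of the $n$ columns appropriately; I would cite or reprove this. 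Then the Jacobian constant from this identification, the constant from the $v_0$-fibration, the factor $2n^3$ in $\theta_n$, the rescaling of $z$ (contributing $(2n^3)^{-1}$ to undo, i.e.\ replacing $e^{-2n^3|z|^2}$ by $e^{-|z|^2}$ at the cost of a Jacobian $\frac1{2n^3}$ which cancels the explicit $2n^3$), and the truncation constant $C_n\le 2$ and $P_{\dots}\ge 1/2$ all get collected; the assertion is that the product of these explicit constants is $\le 22\,n^3$. The main obstacle, then, is really (a) getting the Bartlett-decomposition identification of $MQ_MU$ with a Gaussian matrix exactly right including the normalizing constant, and (b) tracking the various dimensional constants (powers of $n$, factors of $\pi$, $\mathrm{Vol}(\CU_{n-1})$, and the $2n^3$) through the coarea computation on $\V_n$ without losing or gaining spurious factors, so that the final bound is the clean $22\,n^3$; the inequality direction everywhere is friendly because we may freely drop the indicator $\chi_{n|z|\|M^\dagger\|_F\le1}$ on the left.
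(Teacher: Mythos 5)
Items (1) and (2) of your proposal are fine in substance (your Gaussian count contains an arithmetic slip: $1+n(n-1)+(n-1)+(n-1)^2=2n^2-2n+1$, exactly the stated number, so there is no discrepancy to reconcile), and your overall strategy — slice to $v_0=e_1$ by unitary invariance and compare with the push-forward of $\theta_n$ under $\varphi_n$ — is the same as the paper's. But your key step (i) is false, and it is precisely the point where the real work lies. The map $(M,U)\mapsto MQ_MU=R^*U$, with $M\sim\G((n-1)\times n)$ and $U$ Haar on $\CU_{n-1}$, does \emph{not} push forward to (a multiple of) the Gaussian on $\C^{(n-1)\times(n-1)}$: the $R$-factor of the $n\times(n-1)$ Gaussian $M^*$ has $j$-th diagonal entry distributed as the norm of an $(n-j+1)$-dimensional complex Gaussian vector, whereas for a square $(n-1)\times(n-1)$ Ginibre matrix it is the norm of an $(n-j)$-dimensional one; the ratio of the two densities is proportional to $\prod_j |r_{jj}|^2=|\det R|^2$. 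The correct statement — which the paper derives from the coarea formula on the linear solution variety $\Vlin$ (Proposition \ref{prop:trick2} and Corollary \ref{cor:trick2}) — is that the law of $B=MQ_MU$ has density proportional to $e^{-\|B\|_F^2}\,|\det B|^2$. This determinant weight is not a nuisance constant: it is the whole mechanism of the proof, because the right-hand side of \eqref{eq:wish2}, rewritten via the coarea formula on $\V_n$ (Proposition \ref{prop:trick}), carries the weight $|\det(B-\lambda I_{n-1})|^2$, and the left-hand side must produce a comparable weight. With your (incorrect) unweighted Gaussian push-forward the left side has no determinant factor at all, and no polynomial factor can restore it: $|\det(B-\lambda I_{n-1})|^2$ vanishes on the ill-posed set, so an inequality of the form \eqref{eq:wish2} would fail for unitarily invariant $\phi$ concentrating near $\Sigma'$.

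Relatedly, you drop the indicator $\chi_{n|z|\,\|M^\dagger\|_F\le 1}$ as if it were harmless bookkeeping, but it must be used \emph{before} being dropped: on its support all eigenvalues of $B$ have modulus at least $n|z|$, which gives $|\det B|^2\le S_n\,|\det(B-zI_{n-1})|^2$ with $S_n\le 2e$ (Lemma \ref{lem:SnCn}); this is the step that converts the $|\det B|^2$ weight coming from the push-forward into the $|\det(B-\lambda I_{n-1})|^2$ weight needed to invoke Proposition \ref{prop:trick}. Finally, your accounting of the $z$-rescaling is wrong: the factor $2n^3$ from the normalization of $z\sim\G(1,(2n^3)^{-1})$ does \emph{not} cancel — bounding $e^{-2n^3|z|^2}\le e^{-|z|^2}$ leaves it standing, and it is exactly the source of the $n^3$ in the final constant $2C_nS_n n^3\le 22\,n^3$ (if it cancelled, you would have solved the open problem stated immediately after the theorem). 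So the proposal, as written, has a genuine gap at its central step; repairing it amounts to proving the determinant-weighted push-forward identity, which is the content of the paper's section on integration in the linear solution variety.
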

Note that \eqref{eq:wish2} can be understood as follows: let $m_1$ be the push--forward measure of $\varphi_n$ in $\V$ and let $m_2$ be the measure in $\V$ given by
\[
m_2(X)=\E_{A_0\sim\G(n)}\left(\sharp\{(\lambda_0,v_0):A_0v_0=\lambda_0v_0,(A_0,\lambda_0,v_0)\in X\}\right),\quad X\in\V,X {\text{ measuable}}.
\]
Then,
\[
m_1\leq 22\,n^3m_2.
\]
\begin{problem}
Describe an alternative collection $(\Omega_n,\varphi_n)$ which satisfies a sharper version of \eqref{eq:wish2}, with a constant in the place of $22n^3$. This would improve the running time of Algorithm $b)$ below by a factor of $O(n^3)$.
\end{problem}
We are now prepared to describe our algorithm $b)$:
\subsection{Algorithm $b)$}
Consider the following algorithm on input $A\in\mnc$:
\begin{enumerate}
\item Choose $w\in(\Omega_n,\theta_n)$, compute $\varphi_n(w)=(A_0,\lambda_0,v_0)$ (note that $v_0=e_1$).
\item Follow the homotopy path given by \eqref{eq:At} using the algorithm of Proposition \ref{prop:stepsold} (see Appendix \ref{appendix2}) and output $(\lambda,v)$ an approximate eigenpair of $A$.
\end{enumerate}
From Theorem \ref{th:mainrandom}, the expected running time of the first part of the algorithm is $O(n^3)$. Moreover, the expected number of homotopy steps in the second part is
\[
S=\mathbb{E}_{A\sim\G(n),w\in\Omega_n}\left(\mathcal{C}(A,\varphi_n(w))\right)=\mathbb{E}_{w\in\Omega_n}\left(\mathbb{E}_{A\sim\G(n)}\left(\mathcal{C}(A,\varphi_n(w))\right)\right),
\]
where $\mathcal{C}(A,\varphi_n(w))$ is as in Proposition \ref{prop:stepsold}. From \eqref{eq:wish2}, we have
\[
S\leq 22\,n^3\mathbb{E}_{A_0\sim\G(n)}\left(\Sigma_{\lambda_0,v_0:A_0v_0=\lambda_0v_0}\mathbb{E}_{A\sim\G(n)}\left(\mathcal{C}(A,A_0,\lambda_0,v_0)\right)\right)\underset{\text{Th. \ref{th:randomhomotopy}}}{\leq} O(n^6).
\]
The expected running time of the second part of the algorithm is equal to the expected number of homotopy steps, multiplied by the number of arithmetic operations needed to perform one homotopy step, that is $O(n^3)$, so the total average running time of the algorithm is bounded by $O(n^9)$, a polynomial in $n$ as claimed. Note that this is just an upper bound: the practical performance of the algorithm might be much better.

\section{Proof of $(1)$ and $(2)$ in Theorem \ref{th:mainrandom}}
Note that $(2)$ is trivial. We thus prove $(1)$. The procedure we suggest to chose $w\in\Omega_n$ at random is the following (note that each step requires $O(n^3)$ arithmetic operations or random choices):
\begin{enumerate}
\item Choose $y\sim \G(1)$ and let $z=y/((2n^3)^{1/2})$ (this produces $z\sim\G(1,(2n^3)^{-1})$). Choose $M\sim\G((n-1)\times n)$. If $n|z|\,\|M^\dagger\|_F>1$ then discard $z,M$ and repeat this step.
\item Choose $w\in\G(1\times(n-1))$.
\item Choose $B\in\G(n-1)$ and let $U$ be the $Q$ factor in the $QR$--decomposition of $B$, then multiply $Q$ by the diagonal matrix with entries $r_{ii}/|r_{ii}|$ where the $r_{ii}$ are the diagonal elements of the $R$ factor. This produces a unitary matrix $U$ uniformly distributed in $\CU_{n-1}$, see for example \cite{Mezzadri2007}.
\end{enumerate}
The only subtle point is that step $(1)$ might have to be repeated an arbitrary number of times. We now prove that this is not a troubling problem. Let us denote by $P(X)$ the probability that a certain event $X$ happens. The expected number of times that step (1) will be repeated is $\sum_{k=1}^\infty k P(\text{step $k$ is reached})$. Thus, the expected number of times that we will have to choose $z\sim\G(1,(2n^3)^{-1})$ and $M\sim\G((n-1) \times n)$ is given by
\[
\sum_{k=1}^\infty k P\left(z,M:\|M^\dagger\|_F\geq \frac{1}{n|z|}\right)^{k-1}=
\frac{1}{\left(1-P\left(z,M:\|M^\dagger\|_F\geq \frac{1}{n|z|}\right)\right)^2}.
\]
The expected value of $\|M^\dagger\|_F$ can be computed exactly:  $\E_{M\sim\G((n-1)\times n)}(\|M^\dagger\|_F^2)=n-1$ (see Corollary \ref{cor:exactpinvM}). From Markov's inequality we then have for any $T>0$
\begin{equation}\label{eq:probmalo}
P(\|M^\dagger\|_F\geq T)=P(\|M^\dagger\|_F^2\geq T^2)\leq \frac{n-1}{T^2}\leq \frac{n}{T^2}.
\end{equation}
Hence,
\begin{equation}\label{eq:prob}
P\left(z,M:\|M^\dagger\|\geq \frac{1}{nt}\right)=\frac{2n^3}{\pi}\int_{z\in\C}P\left(M:\|M^\dagger\|_F\geq \frac{1}{n|z|}\right)e^{-2n^3|z|^2}\,dz\leq 
\end{equation}
\[
\frac{2n^3}{\pi}\int_{z\in\C}n^3|z|^2e^{-2n^3|z|^2}\,dz=\frac{1}{2}.
\]
That is, the expected number of times that $(1)$ will be executed is at most
\[
\frac{1}{\left(1-\frac{1}{2}\right)^2}=4.
\]
\section{Integration in the solution variety}\label{sec:interationV}

Given $\varphi:\V\to[0,\infty]$, we define $\hat\varphi:(\mnc\setminus\Sigma)\to\R$ by
$$
\widehat\varphi(A)=\sum_{\lambda,v:Av=\lambda v}\varphi(A,\lambda,v).
$$
The smooth coarea formula (a modern classical formula due to Federer \cite{Fe69}, see \cite[p. 245]{BlCuShSm98} for a smooth version) can be used to deintegrate functions defined on $\CV$ using the projections \eqref{eq:pi}. More exactly, for every measurable nonnegative function defined on $\CV$ we have:
\begin{equation}\label{eq:formulafordeterministic}
\int_{A\in\C^{n\times n}}\widehat{\phi}(A)\,dA=\int_{v\in\pc}\int_{(A,\lambda):Av=\lambda v}\phi(A,\lambda,v)\frac{NJ(\pi)(A,\lambda,v)}{NJ(\pi_2)(A,\lambda,v)}\,d(A,\lambda)\,dv,
\end{equation}
where $NJ$ means Normal Jacobian (i.e. the determinant of the Jacobian restricted to the orthogonal of its kernel). If additionally the function $\phi$ is unitarily invariant, then the inner integral in the right--hand side does not depend on $v$, so we can take $v=e_1$ and we have:
\[
\int_{A\in\C^{n\times n}}\widehat{\phi}(A)\,dA=vol(\pc)\int_{(A,\lambda):Ae_1=\lambda e_1}\phi(A,\lambda,e_1)\frac{NJ(\pi)(A,\lambda,e_1)}{NJ(\pi_2)(A,\lambda,e_1)}\,d(A,\lambda).
\]
The set $\{{(A,\lambda):Ae_1=\lambda e_1}\}$ can be parametrized by $(\lambda,B,w)\rightarrow(A,\lambda)$ where $\lambda\in\C,B\in\C^{(n-1)\times(n-1)},w\in\C^{n-1}$ and
\begin{equation}\label{eq:A}
A=\begin{pmatrix}
\lambda&w^*\\0&B
\end{pmatrix}.
\end{equation}
The Jacobian of the parametrization is $2$. We thus have:
\[
\int_{A\in\C^{n\times n}}\widehat{\phi}(A)\,dA=vol(\pc)\int_{\lambda\in\C,B\in\C^{(n-1)\times(n-1)},w\in\C^{n-1}}2\phi(A,\lambda,e_1)\frac{NJ(\pi)(A,\lambda,e_1)}{NJ(\pi_2)(A,\lambda,e_1)}\,d(\lambda,B,w),
\]
where $A$ in the right--hand term is given by \eqref{eq:A}. Finally, the quotient of normal jacobians in the formula above is equal to $|\det(B-\lambda I_{n-1})|^2/2$ (see Appendix \ref{sec:nj}) and we conclude:
\begin{equation}\label{eq:coareap2}
\int_{A\in\C^{n\times n}}\widehat{\phi}(A)\,dA=vol(\pc)\int_{\lambda\in\C,B\in\C^{(n-1)\times(n-1)},w\in\C^{n-1}}\phi(A,\lambda,e_1)|\det(B-\lambda I_{n-1})|^2\,d(\lambda,B,w),
\end{equation}
where $A$ in the right--hand term is given by \eqref{eq:A}. After computing the constants involved, we have proved the following result. 
\begin{proposition}\label{prop:trick}
Let $\phi:\V\to[0,\infty]$ be a unitarily invariant mesurable function. Then,
$$
\E_{A\sim\G(n)}(\widehat\phi(A))=\frac{1}{\Gamma{(n)}}
\E_{\lambda}\E_{w}\E_{B} 
\left(
\phi\left(\begin{pmatrix}
         \lambda & w\\
         0 & B
        \end{pmatrix},\lambda,e_1\right)
\cdot|\det(B -\lambda I_{n-1})|^2
\right)
$$
where $\lambda,w$ and $B$ are independent $\G(1)$, $\G{1\times (n-1) }$, and $\G(n-1)$, respectively.
\end{proposition}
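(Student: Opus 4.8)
The plan is to feed the auxiliary function $\psi(A,\lambda,v):=\phi(A,\lambda,v)\,e^{-\|A\|_F^2}$ into the identity \eqref{eq:coareap2}, which has already been established for nonnegative unitarily invariant integrands. Since conjugation by a unitary matrix preserves the Frobenius norm, the extra factor $e^{-\|A\|_F^2}$ is unitarily invariant, hence so is $\psi$, and it is nonnegative, so \eqref{eq:coareap2} applies to it verbatim. On the left-hand side $\widehat\psi(A)=e^{-\|A\|_F^2}\widehat\phi(A)$ because the extra factor depends only on $A$ and not on the eigenpair; dividing through by $\pi^{n^2}$ turns $\int_{\C^{n\times n}}e^{-\|A\|_F^2}\widehat\phi(A)\,dA$ into $\E_{A\sim\G(n)}(\widehat\phi(A))$, which is exactly the quantity we want to compute.

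Next I would unwind the right-hand side. For $A=\begin{pmatrix}\lambda & w^*\\0 & B\end{pmatrix}$ as in \eqref{eq:A}, the block structure gives $\|A\|_F^2=|\lambda|^2+\|w\|^2+\|B\|_F^2$, so the weight factors as $e^{-\|A\|_F^2}=e^{-|\lambda|^2}e^{-\|w\|^2}e^{-\|B\|_F^2}$. These are precisely the unnormalized densities of independent $\G(1)$, $\G(1\times(n-1))$, and $\G(n-1)$ variables, whose joint normalizing constant is $\pi^{-(1+(n-1)+(n-1)^2)}=\pi^{-(n^2-n+1)}$. Thus the Lebesgue integral over $(\lambda,B,w)$ on the right of \eqref{eq:coareap2} (applied to $\psi$) equals $\pi^{\,n^2-n+1}$ times $\E_\lambda\E_w\E_B\big(\phi(A,\lambda,e_1)\,|\det(B-\lambda I_{n-1})|^2\big)$, and after dividing by $\pi^{n^2}$ the prefactor collected so far is $vol(\pc)\,\pi^{\,n^2-n+1}/\pi^{n^2}=vol(\pc)\,\pi^{1-n}$.

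It then only remains to insert the value $vol(\pc)=\pi^{n-1}/\Gamma(n)$ of the volume of complex projective space in the metric used throughout the paper, which collapses the prefactor to $vol(\pc)\,\pi^{1-n}=1/\Gamma(n)$ and produces the stated formula. The harmless identification of the row vector $w^*$ in \eqref{eq:A} with a $\G(1\times(n-1))$ vector (conjugation preserves the Gaussian law) justifies writing $w$ in the displayed matrix, and the measure-zero set where $A\in\Sigma$ or the parametrization degenerates is irrelevant since all integrands are nonnegative and Tonelli's theorem applies throughout. There is no genuine obstacle here once \eqref{eq:coareap2} is in hand; the only point demanding care is the bookkeeping of the powers of $\pi$ together with the projective-volume constant.
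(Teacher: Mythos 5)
Your proof is correct and follows essentially the same route as the paper: the paper derives \eqref{eq:coareap2} and then simply asserts the proposition ``after computing the constants involved,'' and your argument supplies exactly that missing bookkeeping, applying \eqref{eq:coareap2} to $\phi\,e^{-\|A\|_F^2}$ and tracking $\pi^{n^2}$, $\pi^{n^2-n+1}$ and $vol(\pc)=\pi^{n-1}/\Gamma(n)$ to arrive at the $1/\Gamma(n)$ prefactor. The side remarks (unitary invariance of the Gaussian weight, $w$ versus $w^*$, Tonelli for nonnegative integrands) are all accurate, so nothing is missing.
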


\section{Proof of Theorem \ref{thm:mu2average}}\label{sec:mu2average}
We begin with the following result.
\begin{proposition}\label{prop:smooth} The following inequality holds
$$
\Esp_{A\sim \mathcal{G}(m,\sigma^2)_{\widehat A}} (\|A^{-1}\|_F^2\,|\det(A)|^2))\leq \frac{m}{\sigma^2}\, \Esp_{A\sim \mathcal{G}(m,\sigma^2)_{\widehat A}} (|\det(A)|^2)).
$$
Furthermore, the equality holds if and only if $\widehat{A}=0$. In particular, 
$$
\Esp_{A\in\mathcal{G}(m,\sigma^2)} (\|A^{-1}\|_F^2\,|\det(A)|^2))=m!\,m \sigma^{2m-2}.
$$
\end{proposition}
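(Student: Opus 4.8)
The plan is to reduce the inequality
$$
\Esp_{A\sim\G(m,\sigma^2)_{\widehat A}}\bigl(\|A^{-1}\|_F^2\,|\det A|^2\bigr)\leq\frac{m}{\sigma^2}\,\Esp_{A\sim\G(m,\sigma^2)_{\widehat A}}\bigl(|\det A|^2\bigr)
$$
to a statement about a \emph{single} matrix entry, using the cofactor expansion $A^{-1}=\operatorname{adj}(A)/\det A$. Writing $\|A^{-1}\|_F^2\,|\det A|^2=\|\operatorname{adj}(A)\|_F^2=\sum_{i,j}|\det A_{ij}|^2$, where $A_{ij}$ is the $(i,j)$ minor (an $(m-1)\times(m-1)$ submatrix), the left-hand side becomes $\sum_{i,j}\Esp(|\det A_{ij}|^2)$. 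So the goal is to show that for each $(i,j)$, $\Esp(|\det A_{ij}|^2)\le\frac{1}{\sigma^2}\Esp(|\det A|^2)$; summing over the $m^2$ pairs then gives the factor $m^2\cdot\frac1{m}$... wait — I need $\frac{m}{\sigma^2}$, not $\frac{m^2}{\sigma^2}$, so the per-entry bound I actually want is $\sum_{i}\Esp(|\det A_{ij}|^2)\le\frac{1}{\sigma^2}\Esp(|\det A|^2)$ for each fixed column $j$ (equivalently each fixed row), and then summing over the $m$ columns yields $\frac{m}{\sigma^2}\Esp(|\det A|^2)$.

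To prove the per-column bound, fix column $j$ and expand $\det A$ along it: $\det A=\sum_i (-1)^{i+j}a_{ij}\det A_{ij}$. Crucially, $\det A_{ij}$ does not depend on the $j$-th column of $A$, so conditioning on all columns except the $j$-th, the quantities $c_i:=(-1)^{i+j}\det A_{ij}$ are constants while $a_{1j},\dots,a_{mj}$ are independent $\NN_\C(\widehat a_{ij},\sigma^2)$. Then
$$
\Esp\bigl(|\det A|^2\,\bigm|\,\text{other columns}\bigr)=\Esp\Bigl(\Bigl|\sum_i c_i a_{ij}\Bigr|^2\Bigr)=\sigma^2\sum_i|c_i|^2+\Bigl|\sum_i c_i\widehat a_{ij}\Bigr|^2\ge\sigma^2\sum_i|c_i|^2,
$$
using the moment formulas from Section~\ref{sec:rv} (independence kills the cross terms $\Esp(a_{ij}\overline{a_{kj}})=\widehat a_{ij}\overline{\widehat a_{kj}}$ for $i\ne k$, leaving $\sum_i|c_i|^2(\sigma^2+|\widehat a_{ij}|^2)+\sum_{i\ne k}c_i\overline{c_k}\widehat a_{ij}\overline{\widehat a_{kj}}$, which regroups as claimed). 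Since $\sum_i|c_i|^2=\sum_i|\det A_{ij}|^2$, taking expectations over the remaining columns gives $\sigma^2\sum_i\Esp(|\det A_{ij}|^2)\le\Esp(|\det A|^2)$, which is exactly the per-column bound. The equality case is transparent from this computation: equality holds for every column iff $\sum_i c_i\widehat a_{ij}=0$ a.s.\ for every $j$, i.e.\ (taking $\widehat a$'s generic in the conditioning) iff $\widehat A=0$; one should check the ``only if'' direction carefully, since $\widehat A=0$ making the bias term vanish is immediate, while deducing $\widehat A=0$ from equality requires observing that $\Esp(|\sum_i c_i\widehat a_{ij}|^2)=0$ forces a nontrivial linear relation among minors that fails unless $\widehat a_{\cdot j}=0$.

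For the last assertion, specialize to $\widehat A=0$, so equality holds: $\Esp_{\G(m,\sigma^2)}(\|A^{-1}\|_F^2|\det A|^2)=\frac{m}{\sigma^2}\Esp_{\G(m,\sigma^2)}(|\det A|^2)$. It remains to evaluate $\Esp_{\G(m,\sigma^2)}(|\det A|^2)$. By multilinearity and the independence/moment formulas, $\Esp(|\det A|^2)=\sum_{\pi,\tau\in S_m}\operatorname{sgn}(\pi)\operatorname{sgn}(\tau)\Esp\bigl(\prod_i a_{i\pi(i)}\overline{a_{i\tau(i)}}\bigr)$, and since the entries are independent centered with $\Esp(|a_{ik}|^2)=\sigma^2$, only the diagonal terms $\pi=\tau$ survive, giving $\Esp(|\det A|^2)=m!\,\sigma^{2m}$. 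Hence $\Esp_{\G(m,\sigma^2)}(\|A^{-1}\|_F^2|\det A|^2)=\frac{m}{\sigma^2}\cdot m!\,\sigma^{2m}=m!\,m\,\sigma^{2m-2}$.

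The main obstacle is not the algebra but making sure nothing blows up: on the null set where $\det A=0$ the identity $\|A^{-1}\|_F^2|\det A|^2=\|\operatorname{adj}A\|_F^2$ must be interpreted as the right-hand side (a polynomial, always finite), and one should remark that $\{\det A=0\}$ has measure zero so the two sides of the proposition agree a.e.; modulo that bookkeeping the argument is a short conditioning computation.
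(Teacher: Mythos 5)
Your argument is correct and is essentially the paper's own proof: both expand $\det A$ along a fixed column, use that the minors are independent of that column's entries (your conditioning step is the same computation) to get $\sigma^2\sum_i\Esp|\det A^{i,j}|^2\le\Esp|\det A|^2$ per column, and then sum over columns via the Cramer/adjugate identity $\|A^{-1}\|_F^2|\det A|^2=\sum_{i,j}|\det A^{i,j}|^2$, with the equality case coming from the vanishing of the bias term $\Esp|\det([A;j;\widehat A_j])|^2$. The only cosmetic difference is that you evaluate $\Esp|\det A|^2=m!\,\sigma^{2m}$ in the centered case by the permutation expansion, whereas the paper gets it by induction from the same column-expansion identity.
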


\begin{proof}
Expanding the determinant of $A$ by the $k$th column we have
 $$
 \det(A)=\sum_{j=1}^m (-1)^{j+k}{a}_{j,k} \det{A}^{j,k},
 $$
where $A^{j,k}$ denotes the matrix that results from the matrix $A$ by removing the $j$th row and $k$th column. 
Hence,
 $$
 |\det(A)|^2=\det(A)\overline{\det(A)}=\sum_{j,j'=1}^m (-1)^{j+j'+2k}{a}_{j,k}\,\overline{{a}_{j',k}}\, \det{A}^{j,k}\,\overline{\det{A}^{j',k}},
 $$
Observe that the random variables ${a}_{j,k}$ and ${a}_{j',k}$ are independent of $\det{A}^{j,k}$ and $\det{A}^{j',k}$. 
Then, 
$$
 \Esp_{A\sim \mathcal{G}(m,\sigma^2)_{\widehat A}}|\det(A)|^2=\sum_{j,j'=1}^m (-1)^{j+j'+2k}\Esp({a}_{j,k}\,\overline{{a}_{j',k}} )\,\Esp(\det{A}^{j,k}\,\overline{\det{A}^{j',k}}),
 $$
Now observe that
\[ \Esp({a}_{j,k}\,\overline{{a}_{j',k}} )= \left\{ \begin{array}{ll}
          \widehat{a}_{j,k}\,\overline{\widehat{a}_{j',k}} & \mbox{if $j\neq j'$};\\
        \sigma^2+|\widehat{a}_{j,k}|^2 & \mbox{otherwise}.\end{array} \right. \] 
Then we conclude,
 \begin{equation}\label{saconstant2}
 \Esp_{A\sim \mathcal{G}(m,\sigma^2)_{\widehat A}}|\det(A)|^2=\Esp|\det([A;k;\widehat{A}_k])|^2+\sigma^2\sum_{j=1}^m \Esp |\det{A}^{j,k}|^2 ,\qquad (k=1,\ldots,m),
 \end{equation}
 where $[A;k;\widehat{A}_k]$ is the matrix formed by replacing the (random) $k$th column of $A$ by the (deterministic) $k$th column of  $\widehat{A}$.
 
 On the other hand, from a direct application of Cramer's Rule and (\ref{saconstant2}), we conclude
 $$
 \sigma^2\Esp_{A\sim \mathcal{G}(m,\sigma^2)_{\widehat A}}\|A^{-1}\|^2_F\,|\det(A)|^2=\sigma^2\sum_{j,k=1}^m \Esp|\det{A^{j,k}}|^2=m\, \Esp|\det(A)|^2- \sum_{k=1}^m\Esp|\det([A;k;\widehat{A}_k])|^2,
 $$
and the fist claim of the proposition follows. Moreover, when $\widehat A=0$, from (\ref{saconstant2}) and the fact that the matrices $A^{j,k}$ are $\mathcal{G}(m-1,\sigma^2)$-distributed, working by induction the equality
$$
\Esp_{A\in\mathcal{G}(m,\sigma^2)}|\det(A)|^2=\sigma^{2m}m!.
$$
holds, and the second claim of the proposition follows.
 \end{proof}

From \eqref{eq:formulafordeterministic} and from Lemma \ref{lem:quotient-NJ}, for $\alpha\in\{0,1\}$ we have
\begin{equation}\label{eq:citame}
\Esp_{A\sim\G(m)_{\widehat{A}}}\left(\frac{\widehat{\mu_F^{2\alpha}}(A)}{\|A\|^{2\alpha}}\right)=
\int_{v\in\mathbb{P}(\C^n)} \frac{e^{-\|\hat{y}_v\|^2}}{\pi^{n-1}}\Esp_\lambda\Esp_{w}\Esp_{B\sim\mathcal{G}(m-1)_{\widehat B}}(\|(B-\lambda I_{n-1})^{-1}\|_F^{2\alpha}\,|\det(B-\lambda I_{n-1})|^2)\,dv,
\end{equation}
where $\hat{y}_v=\pi_{v^\perp}\hat A v$, and $\lambda\in\C$, $w\in\C^{n-1}$, and $B\in\C^{(n-1)\times (n-1)}$ are independent Gaussian random variables centered at 
$$
\widehat{\lambda}:=\frac{\pes{\widehat{A}v}{v}}{\|v\|^2};\quad \widehat{w}:= \Pi_{v}\widehat{A}|_{v^\perp};\quad \widehat{B}:=\Pi_{v^\perp}\widehat A|_{v^\perp},
$$
respectively.
\begin{corollary} For fixed, $v$, $\lambda$, and $w$, we get
$$
 \Esp_{B\sim\mathcal{G}(m-1,\sigma^2)_{\widehat B}}(\|(B-\lambda I_{n-1})^{-1}\|^2\,|\det(B-\lambda I_{n-1})|^2)\leq \frac{n-1}{\sigma^2}\Esp_{B\sim\mathcal{G}(m-1,\sigma^2)_{\widehat B}}(|\det(B-\lambda I_{n-1})|^2).
 $$
\end{corollary}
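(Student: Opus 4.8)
The plan is to reduce the statement immediately to Proposition \ref{prop:smooth} by an affine change of variable. First I would observe that for fixed $\lambda$ the map $B\mapsto B-\lambda I_{n-1}$ sends a $\mathcal{G}(m-1,\sigma^2)_{\widehat B}$-distributed matrix to a matrix distributed as $\mathcal{G}(m-1,\sigma^2)_{\widehat B-\lambda I_{n-1}}$: translating a complex Gaussian vector by a constant only shifts the mean and leaves the per-entry variance $\sigma^2$ untouched. Writing $A:=B-\lambda I_{n-1}$, both sides of the claimed inequality are then expectations over $A\sim\mathcal{G}(m-1,\sigma^2)_{\widehat B-\lambda I_{n-1}}$, of $\|A^{-1}\|_F^2\,|\det A|^2$ on the left and of $|\det A|^2$ on the right.

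Then I would invoke Proposition \ref{prop:smooth}, applied with matrix size $m-1$ (which equals $n-1$) and center $\widehat A$ replaced by $\widehat B-\lambda I_{n-1}$. That proposition gives exactly
$$
\Esp_{A\sim\mathcal{G}(m-1,\sigma^2)_{\widehat B-\lambda I_{n-1}}}\bigl(\|A^{-1}\|_F^2\,|\det A|^2\bigr)\;\le\;\frac{m-1}{\sigma^2}\,\Esp_{A\sim\mathcal{G}(m-1,\sigma^2)_{\widehat B-\lambda I_{n-1}}}\bigl(|\det A|^2\bigr),
$$
and since $m-1=n-1$ this is the assertion of the corollary. If the operator norm $\|\cdot\|$ is what is meant on the left-hand side rather than $\|\cdot\|_F$, the same bound follows a fortiori from $\|A^{-1}\|\le\|A^{-1}\|_F$. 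The data $v$ and $w$ that are held fixed never enter: once $\widehat B$ and $\sigma^2$ are prescribed the law of $B$ is determined, so there is nothing additional to verify.

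There is essentially no obstacle here; the one point that deserves care is feeding the correct dimension into Proposition \ref{prop:smooth}, namely the size $n-1$ of the matrix $B-\lambda I_{n-1}$, so that the prefactor comes out as $(n-1)/\sigma^2$ and not $n/\sigma^2$. As a side remark one can note that Proposition \ref{prop:smooth} yields equality precisely when $\widehat B-\lambda I_{n-1}=0$, i.e. when $\widehat B=\lambda I_{n-1}$, but this sharper statement is not needed for the corollary.
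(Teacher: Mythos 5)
Your proof is correct and is essentially the paper's own argument: the paper likewise sets $\widehat C=\widehat B-\lambda I_{n-1}$, notes that $B-\lambda I_{n-1}\sim\mathcal{G}(m-1,\sigma^2)_{\widehat C}$, and concludes immediately from Proposition \ref{prop:smooth}. Your added remarks on the dimension $n-1$ and on $\|A^{-1}\|\leq\|A^{-1}\|_F$ are accurate but do not change the route.
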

\begin{proof}
 Note that  
\[
\Esp_{B\sim\mathcal{G}(m-1,\sigma^2)_{\widehat B}}(\|(B-\lambda I_{n-1})^{-1}\|^2\,|\det(B-\lambda I_{n-1})|^2)= \Esp_{C\sim\mathcal{G}(m-1,\sigma^2)_{\widehat C}}(\|C^{-1}\|^2\,|\det C|^2),
\]
 where $\widehat C=\widehat B-\lambda I_{n-1}$. Now the proof follows immediately from Proposition \ref{prop:smooth}
\end{proof}
Now we prove the bound on the condition number. Note that
\[
\frac{\mu_F(A)^2}{\|A\|_F^2}=\frac{\max(1,\|A\|_F^2\|A_{\lambda,v}^{-1}\|^2)}{\|A\|_F^2}\leq \frac{1+\|A\|_F^2\|A_{\lambda,v}^{-1}\|^2}{\|A\|_F^2},
\]
and
\[
 \Esp_{A\sim\G(m,\sigma^2)_{\widehat{A}}}\left(\sum_{\lambda,v:Av=\lambda v}\frac{1}{\|A\|_F^2}\right)\leq\frac{1}{\sigma^2}.
\]
Hence, 
\begin{align*}
 \Esp_{A\sim\G(m,\sigma^2)_{\widehat{A}}}\left(\frac{\widehat{\mu_F^2}(A)}{\|A\|^2}\right)&\leq\frac{1}{\sigma^2}+
\int_{v\in\mathbb{P}(\C^n)} \frac{e^{-\|\hat{y}_v\|^2}}{\pi^{n-1}}\Esp_\lambda\Esp_{w}\Esp_{B\sim\mathcal{G}(m-1,\sigma^2)_{\widehat B}}(\|(B-\lambda I_{n-1})^{-1}\|^2\,|\det(B-\lambda I_{n-1})|^2)\,dv\\
&\leq
\frac{1}{\sigma^2}+\int_{v\in\mathbb{P}(\C^n)}\frac{e^{-\|\hat{y}_v\|^2}}{\pi^{n-1}} \Esp_\lambda\Esp_{w}\Esp_{B\sim\mathcal{G}(m-1,\sigma^2)_{\widehat B}}(|\det(B-\lambda I_{n-1})|^2)\,dv\\
&\underset{\text{\eqref{eq:citame}, $\alpha=0$}}{=}\frac{1}{\sigma^2}+\frac{n-1}{\sigma^2}\Esp_{A\sim\G(m,\sigma^2)_{\widehat{A}}}(n)=\frac{(n-1)n+1}{\sigma^2}\leq\frac{n^2}{\sigma^2}.
\end{align*}

This proves the first part of Theorem \ref{thm:mu2average} (note that we divide by $n$ to get the average also on the eigenpairs). For the second part of the theorem,  let
\[
Q=\frac{1}{Vol(\S)}\int_{A\in\S}\frac{1}{n}\sum_{(\lambda,v):Av=\lambda v}\mu_F(A,\lambda,v)^2\,dA
\]
be the quantity we want to compute. From the first part of the theorem we have
\[
\frac{1}{\pi^{n^2}}\int_{A\in\mnc}\frac{1}{n}\sum_{(\lambda,v):Av=\lambda v}\frac{\mu_F\left(A,\lambda,v\right)^2}{\|A\|_F^2}e^{-\|A\|_F^2}\,dA\leq n.
\]
On the other hand,
\[
\frac{1}{\pi^{n^2}}\int_{A\in\mnc}\frac{1}{n}\sum_{(\lambda,v):Av=\lambda v}\frac{\mu_F\left(A,\lambda,v\right)^2}{\|A\|_F^2}e^{-\|A\|_F^2}\,dA=
\]
\[
\frac{1}{\pi^{n^2}}\int_0^\infty\frac{e^{-\rho^2}}{\rho^2}\int_{A:\|A\|_F=\rho}\frac{1}{n}\sum_{(\lambda,v):Av=\lambda v}\mu_F\left(A,\lambda,v\right)^2\,dA\,d\rho.
\]
Now, because $\mu_F(A,\lambda,v)$ is invariant under multiplication of $A$ by a nonzero complex number, denoting $\nu_\rho=Vol(A:\|A\|_F=\rho)$, we have
\[
\frac{1}{\nu_\rho}\int_{A:\|A\|_F=\rho}\frac{1}{n}\sum_{(\lambda,v):Av=\lambda v}\mu_F\left(A,\lambda,v\right)^2\,dA=Q,\quad 0<\rho<\infty.
\]
We have thus proved:
\[
\frac{Q}{\pi^{n^2}}\int_0^\infty\frac{\nu_\rho e^{-\rho^2}}{\rho^2}\,d\rho\leq n.
\]
Note now that
\[
\nu_\rho=\frac{2\pi^{n^2}}{\Gamma(n^2)}\rho^{2n^2-1}
\]
to conclude
\[
Q\leq  \frac{(n-1)\Gamma(n^2)}{2\int_0^\infty\rho^{2n^2-3}e^{-\rho^2}\,d\rho}=\frac{n\Gamma(n^2)}{\Gamma(n^2-1)}=n(n^2-1)\leq n^3,
\]
and the theorem follows.
\section{Proof of Theorem \ref{th:deterministic1}}
We follow along the lines of the proof of \cite[Theorem 18.14]{BurgisserCucker2013}. First note that the homotopy \eqref{eq:At} is unchanged if $A$ is multiplied by some real positive number. Thus, the expected value of the number of homotopy steps needed to follow \eqref{eq:At} is equal for every centrally symmetric density function of $A$. In particular, for every $T=\sqrt{2}\, n>0$ and for every eigenvalue, eigenvector pair $(\lambda_0,v_0)$ of $A_0$ we have
\[
\E_{A\sim \G(n)}\left(\mathcal{C}(A,A_0,\lambda_0,v_0)\right)= \E_{A\sim \G_T(n)}\left(\mathcal{C}(A,A_0,\lambda_0,v_0)\right),
\]
where $\G_T(n)$ is the truncated Gaussian distribution described in Section \ref{sec:rv}. From Lemma \ref{lem:daigual} we then have that
\[
\E_{A\sim \G_T(n)}\left(\sum_{\lambda_0,v_0:A_0v_0=\lambda_0v_0}\mathcal{C}(A,A_0,\lambda_0,v_0)\right)\leq 
\]
\[
\E_{A\sim \G_T(n)}\left( c\|A_0\|_F\,\|A\|_F\int_0^1\frac{\sum_{\lambda_0,v_0:A_0v_0=\lambda_0v_0}\mu(A_t,\lambda_t,v_t)^2}{\|A_t\|_F^2}\,dt\right),
\]
where $A_t=(1-t)A_0+tA$ and $\lambda_t,v_t$ are defined by continuation. Now, note that the sum covers all the eigenvalue, eigenvector pairs of $A_t$ and thus we have:
\[
\E_{A\sim \G(n)}\left(\mathcal{C}(A,A_0,\lambda_0,v_0)\right)\leq  cT\|A_0\|_F\E_{A\sim \G_T(n)}\left( \int_0^1\frac{\hat{\mu}(A_t)^2}{\|A_t\|_F^2}\,dt\right)\underset{\text{\eqref{eq:truncated}}}{\leq}
\]
\[
2cT\|A_0\|_F\E_{A\sim \G(n)}\left( \int_0^1\frac{\hat{\mu}(A_t)^2}{\|A_t\|_F^2}\,dt\right).
\]
Let 
\[
t_0=\frac{\|A_0\|_F}{200\mu(A_0)^2(T+\|A_0\|_F)},
\]
and note that from Lemma \ref{lem:loclip}, using $\|A_0-A_t\|=t\|A-A_0\|\leq t(T+\|A_0\|_F)$ we have
\[
t\leq t_0\Rightarrow \|A_0-A_t\|\leq \frac{\|A_0\|_F}{200\mu(A_0)^2}\Rightarrow \frac{\hat{\mu}(A_t,\lambda_t,v_t)^2}{\|A_t\|_F^2}\leq  \frac{5n{\mu}(A_0)^2}{\|A_0\|_F^2}.
\]
Dividing the interval of integration $(0,1)$ into two pieces $(0,t_0)$ and $(t_0,1)$ We thus have
\[
\E_{A\sim \G(n)}\left(\mathcal{C}(A,A_0,\lambda_0,v_0)\right)\leq 2cT\|A_0\|_F\left(\frac{5n{\mu}(A_0)^2t_0}{\|A_0\|_F^2}+\E_{A\sim \G(n)}\left(\int_{t_0}^1\frac{\hat{\mu}(A_t)^2}{\|A_t\|_F^2}\,dt\right)\right).
\]
In order to bound the last term in the previous expression, we interchange the order of integration:
\[
\E_{A\sim \G(n)}\left(\int_{t_0}^1\frac{\hat{\mu}(A_t)^2}{\|A_t\|_F^2}\,dt\right)=\int_{t_0}^1\E_{A\sim \G(n)}\left(\frac{\hat{\mu}(A_t)^2}{\|A_t\|_F^2}\right)\,dt.
\]
Now, for fixed $t$, if $A\sim G(n)$ then $A_t=(1-t)A_0+tA$ satisfies $A_t\sim \G(n,t^2)_{(1-t)A_0}$ and from Theorem \ref{thm:mu2average} we have
\[
\E_{A\sim \G(n)}\left(\frac{\hat{\mu}(A_t)^2}{\|A_t\|_F^2}\right)\leq\frac{n^2}{t^2},
\]
which implies
\[
\E_{A\sim \G(n)}\left(\int_{t_0}^1\frac{\hat{\mu}(A_t)^2}{\|A_t\|_F^2}\,dt\right)\leq \int_{t_0}^1\frac{n^2}{t^2}\,dt\leq\frac{n^2}{t_0}.
\]
We have thus proved:
\[
\E_{A\sim \G(n)}\left(\mathcal{C}(A,A_0,\lambda_0,v_0)\right)\leq 2cT\|A_0\|_F\left(\frac{5n{\mu}(A_0)^2t_0}{\|A_0\|_F^2}+\frac{n^2}{t_0}\right)\leq
\]
\[
 K_1n+K_2n^3(\sqrt{2}\,n+\|A_0\|_F)\mu_F(A_0)^2,
\]
for some constants $K_1,K_2$. The same bound is valid for whatever scalar multiple of $A_0$ so we can consider $\|A_0\|_F$ arbitrarily small, and the bound claimed in the theorem holds.

Now, for the diagonal choice of $A_0$ in the theorem, recall some basic properties of the hexagonal lattice: say that we take hexagons with side of length $1$, so the distances between two centers is at least $\sqrt{3}$. The first hexagon $H_1$ is centered at $0$ and $\lambda_1=0$, then there are $6$ hexagons $H_2,\ldots,H_7$ whose centers $\lambda_2,\ldots,\lambda_7$ form another hexagon of side $\sqrt{3}$, then $12$ more hexagons $H_8,\ldots,H_{19}$ whose centers $\lambda_8,\ldots,\lambda_{19}$ form another hexagon of side $2\sqrt{3}$ and so on. Adding it up, if $k$ is the first natural number such that $n\leq 1+3k(k+1)$, i.e.,
\[
1+3k(k-1) \leq n\leq 1+3k(k+1),\quad\text{that is}\quad |n-(1+3k^2)|\leq 6k,
\]
we have that $\lambda_1,\ldots,\lambda_n$ are contained in an hexagon centered at $0$ of side $k\sqrt{3}$, and the associated side $1$ hexagons $H_1,\ldots, H_n$ are contained in a circle of radius $k\sqrt{3}+\sqrt{3}/2$. Moreover, note that for every $i$, $1\leq i\leq n$, denoting by $\nu=3\sqrt{3}/2$ the area of an hexagon of side $1$, we have
\[
|\lambda_i|^2=\frac{1}{\nu}\int_{x\in H_i}|\lambda_i|^2\,dx\leq\frac{1}{\nu}\int_{x\in H_i}(|x|+1)^2\,dx.
\]
We thus have
\[
\|A_0\|_F^2=\sum_{i=1}^n|\lambda_i|^2\leq \frac{1}{\nu}\sum_{i=1}^n\int_{x\in H_i}(|x|+1)^2\,dx\leq \frac{1}{\nu}\sum_{i=1}^n\int_{|x|\leq k\sqrt{3}+\sqrt{3}/2}(|x|+1)^2\,dx\]
\[
\leq\sqrt{3}\,\pi\,k^4+o(k^4)=\frac{\sqrt{3}\pi}{9}n^2+o(n).
\]
We conclude that
\[
\mu(A_0)^2\leq \left(\frac{\sqrt{3}\pi}{9}n^2+o(n)\right)\frac{1}{3}=\frac{\sqrt{3}\pi}{27}n^2+o(n),
\]
and the second claim of the theorem follows.

%

\section{Integration in the linear solution variety}

It will be useful to consider a geometrical scheme similar to that of Section \ref{sec:geometry} for the case of solving linear systems: we consider
\[
\Vlin=\{(M,v)\in\mnn\times\pc: \,Mv=0\},
\]
The linear solution variety $\Vlin$ is a $n(n-1)$--dimensional smooth submanifold of $\mnn\times\pc$, and again it inherits the Riemannian structure of the ambient space.

The linear solution variety is equipped with two projections
\begin{equation}\label{eq:pilin}
\begin{matrix}\pil:&\Vlin&\rightarrow&\mnn\\&(M,v)&\mapsto&M\end{matrix}\qquad  \begin{matrix}\pil_2:&\Vlin&\rightarrow&\pc\\&(M,v)&\mapsto&v\end{matrix}
\end{equation}
For $A\in\mnn$, $(\pil)^{-1}(A)$ is a copy of the kernel of $A$ in $\pc$, and for $v\in\pc$, $(\pil_2)^{-1}(v)$ is a copy of the linear subspace of $\mnn$ consisting of the matrices $A$ such that $Av=0$.

An argument similar to that of Section \ref{sec:interationV} can be used for integrating functions in $\Vlin$ using the projections in \eqref{eq:pilin}. In this case, it is well--known (see for example \cite[Lemma 3, p.242]{BlCuShSm98} that
\[
\frac{NJ(\pil)(M,v)}{NJ(\pil_2)(M,v)}=|\det(MM^*)|.
\]
We thus have
\begin{proposition}\label{prop:trick2}
Let $\phil:\Vlin\to[0,\infty]$ be a measurable unitarily invariant function in the sense that $\phil(M,v)=\phil(MU^*,Uv)$ for any unitary matrix $U\in\CU_n$. Then,
$$
\E_{M\sim\G((n-1)\times n)}(\phil(M,Ker(M))=\frac{1}{\Gamma{(n)}}
\E_{B\sim\G(n-1)}(\phil((0\;B),e_1)|\det(B)|^2).
$$
\end{proposition}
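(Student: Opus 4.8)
The plan is to repeat, on the linear solution variety $\Vlin$ with its projections $\pil,\pil_2$ from \eqref{eq:pilin}, the argument of Section~\ref{sec:interationV} that produced Proposition~\ref{prop:trick}, using the normal Jacobian identity $NJ(\pil)/NJ(\pil_2)=|\det(MM^*)|$ recalled just above the statement.

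First I would apply the smooth coarea formula, exactly as in \eqref{eq:formulafordeterministic}, to the weighted function $\psi(M,v):=\phil(M,v)\,e^{-\|M\|_F^2}$ on $\Vlin$; it is still unitarily invariant in the sense of the statement because $\|MU^*\|_F=\|M\|_F$. Since a generic $M\in\mnn$ has a one–dimensional kernel, the fibre of $\pil$ over almost every $M$ is the single point $\mathrm{Ker}(M)\in\pc$, so $\widehat\psi(M):=\sum_{v:\,Mv=0}\psi(M,v)=\phil(M,\mathrm{Ker}(M))\,e^{-\|M\|_F^2}$ for a.e.\ $M$. Deintegrating along $\pil$ and reintegrating along $\pil_2$, and substituting $NJ(\pil)/NJ(\pil_2)=|\det(MM^*)|$, gives
\[
\int_{\mnn}\phil(M,\mathrm{Ker}(M))\,e^{-\|M\|_F^2}\,dM=\int_{v\in\pc}\int_{M:\,Mv=0}\phil(M,v)\,e^{-\|M\|_F^2}\,|\det(MM^*)|\,dM\,dv .
\]

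Next, unitary invariance of $\psi$ makes the inner integral on the right independent of $v$ (conjugate by a unitary taking $v$ to $e_1$; the resulting change of variables on $\mnn$ is an isometry leaving $\|M\|_F$ and $\det(MM^*)$ unchanged), so I fix $v=e_1$ and pull out $\vol(\pc)$. The subspace $\{M\in\mnn:\,Me_1=0\}$ is precisely $\{(0\;B):B\in\C^{(n-1)\times(n-1)}\}$, the parametrization $B\mapsto(0\;B)$ is an isometry (Jacobian $1$), and on it $\|M\|_F^2=\|B\|_F^2$ and $\det(MM^*)=\det(BB^*)=|\det B|^2$. Hence the right-hand side equals $\vol(\pc)\int_B\phil((0\;B),e_1)\,e^{-\|B\|_F^2}|\det B|^2\,dB$.

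Finally I would divide through by the Gaussian normalizers and collect constants. The left side is $\pi^{n(n-1)}\,\E_{M\sim\G((n-1)\times n)}(\phil(M,\mathrm{Ker}(M)))$ and the right side is $\vol(\pc)\,\pi^{(n-1)^2}\,\E_{B\sim\G(n-1)}(\phil((0\;B),e_1)|\det B|^2)$; using $(n-1)^2-n(n-1)=1-n$ together with $\vol(\pc)=\pi^{n-1}/\Gamma(n)$, the surviving factor is exactly $1/\Gamma(n)$, which is the claim. The only genuinely non-formal ingredient is the normal Jacobian identity, which is classical and already recalled; everything else is the same bookkeeping of $\vol(\pc)$ and powers of $\pi$ as in the passage from \eqref{eq:coareap2} to Proposition~\ref{prop:trick}, so I anticipate no real obstacle — the one point requiring care is the a.e.\ identification of $\widehat{\phil}(M)$ with $\phil(M,\mathrm{Ker}(M))$, which needs that $M\in\mnn$ has full rank $n-1$ outside a measure-zero set.
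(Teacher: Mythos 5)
Your proposal is correct and is exactly the argument the paper intends: it only sketches the proof by saying "an argument similar to that of Section \ref{sec:interationV}" together with the classical identity $NJ(\pil)/NJ(\pil_2)=|\det(MM^*)|$, and your write-up fills in precisely that coarea/unitary-invariance/fiber-parametrization bookkeeping, with the constant $\vol(\pc)\,\pi^{(n-1)^2-n(n-1)}=1/\Gamma(n)$ coming out correctly. The only cosmetic remark is that the invariance step uses right-multiplication $M\mapsto MU^*$ by a unitary sending $e_1$ to $v$ rather than a conjugation, but that is what your change of variables actually is, so nothing is missing.
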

An immediate, self--interesting corollary follows:
\begin{corollary}\label{cor:exactpinvM}
\[
\E_{M\in \G((n-1)\times n)}(\|M^\dagger\|_F^2)=n-1.
\]
\end{corollary}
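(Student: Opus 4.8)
The plan is to apply Proposition~\ref{prop:trick2} to the function
\[
\phil(M,v)=\|M^\dagger\|_F^2,
\]
which is a well-defined, nonnegative, measurable function on $\Vlin$ and in fact does not depend on the second argument (for $M\in\mnn$ of full rank $n-1$ the point $v=\ker M\in\pc$ is determined by $M$). First I would check that $\phil$ is unitarily invariant in the sense required: for any $U\in\CU_n$ one has $(MU^*)^\dagger=UM^\dagger$ — this follows by verifying the four Moore--Penrose conditions for $UM^\dagger$, using $U^*U=I$ and the Hermitian-ness of $MM^\dagger$ and $M^\dagger M$ — and hence $\|(MU^*)^\dagger\|_F=\|UM^\dagger\|_F=\|M^\dagger\|_F$ since $U$ is unitary; so $\phil(MU^*,Uv)=\phil(M,v)$.

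Proposition~\ref{prop:trick2} then gives
\[
\E_{M\sim\G((n-1)\times n)}\big(\|M^\dagger\|_F^2\big)=\frac{1}{\Gamma(n)}\,\E_{B\sim\G(n-1)}\big(\|(0\;B)^\dagger\|_F^2\,|\det B|^2\big),
\]
where $(0\;B)$ denotes the $(n-1)\times n$ matrix whose first column is zero and whose remaining block is $B$. For invertible $B$ one has $(0\;B)^*=\begin{pmatrix}0\\B^*\end{pmatrix}$ and $(0\;B)(0\;B)^*=BB^*$, so
\[
(0\;B)^\dagger=(0\;B)^*\big(BB^*\big)^{-1}=\begin{pmatrix}0\\B^*(BB^*)^{-1}\end{pmatrix}=\begin{pmatrix}0\\B^{-1}\end{pmatrix},
\]
whence $\|(0\;B)^\dagger\|_F^2=\|B^{-1}\|_F^2$. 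This identity is only needed on the full-measure set $\{\det B\neq0\}$, since on its complement the factor $|\det B|^2$ kills the integrand anyway. Substituting, the right-hand side becomes $\Gamma(n)^{-1}\,\E_{B\sim\G(n-1)}\big(\|B^{-1}\|_F^2\,|\det B|^2\big)$.

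Finally I would invoke the last assertion of Proposition~\ref{prop:smooth} with $m=n-1$, $\sigma=1$ and $\widehat A=0$, which evaluates this last expectation exactly as $(n-1)!\,(n-1)$. Since $\Gamma(n)=(n-1)!$, dividing yields $n-1$, as claimed. There is no genuine obstacle in this argument: the only mildly delicate points are the unitary covariance $(MU^*)^\dagger=UM^\dagger$ and the block formula for $(0\;B)^\dagger$, both of which are elementary once one recalls that $N^\dagger=N^*(NN^*)^{-1}$ for $N$ of full row rank.
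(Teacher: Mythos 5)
Your proposal is correct and follows essentially the same route as the paper: apply Proposition~\ref{prop:trick2} to $\phil(M,v)=\|M^\dagger\|_F^2$ and then evaluate $\Gamma(n)^{-1}\E_{B\sim\G(n-1)}(\|B^{-1}\|_F^2|\det B|^2)=n-1$ via Proposition~\ref{prop:smooth}. The only difference is that you spell out the unitary covariance $(MU^*)^\dagger=UM^\dagger$ and the identity $\|(0\;B)^\dagger\|_F=\|B^{-1}\|_F$, which the paper leaves implicit.
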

\begin{proof}
From Proposition \ref{prop:trick2} with $\phil(M,\zeta)=\|M^\dagger\|_F^2$, we have
\[
\E_{M\in \G((n-1)\times n)}(\|M^\dagger\|_F^2)=\frac{1}{\Gamma{(n)}}
\E_{B\sim\G((n-1)\times (n-1))}(\|B^{-1}\|_F^2|\det(B)|^2)\underset{\text{Prop. \ref{prop:smooth}}}{=}n-1.
\]
\end{proof}
Assume now that we are given any measurable nonnegative function $\alpha:\C^{(n-1)\times(n-1)}\to[0,\infty]$. We can produce a unitarily invariant function defined in $\Vlin$ by defining
\[
\phil(M,v)=\mathbb{E}_{Q\in\CS^{v}_{(n-1)\times n}}(\alpha(MQ^*)),\quad (M,v)\in\Vlin,
\]
where $\CS^{v}_{(n-1)\times n}$ is the set of $(n-1)\times n$ complex matrices such that $QQ^*=I_{n-1}$ and $Qv=0$. It is a simple exercise to check that $\phil$ is unitarily invariant.
Applying Proposition \ref{prop:trick2} to $\phil$ then yields:
$$
\E_{M}(\mathbb{E}_{Q\in\CS^{Ker(M)}_{(n-1)\times n}}(\alpha(MQ^*)))=\frac{1}{\Gamma{(n)}}
\E_{B}(\mathbb{E}_{U\in\CU_{n-1}}(\alpha(BU)|\det(B)|^2)).
$$
For any given full--rank $M\in\C^{(n-1)\times n}$, let $Q_M$ be the $Q$ factor in some reduced $QR$ decomposition of $M^*$. Note that
\[
\CS^{Ker(M)}_{(n-1)\times n}=\{(Q_MU)^*:U\in\CU_{n-1}\}.
\]
which defines an isometry bewteen $\CU_{n-1}$ and $\CS^{Ker(M)}_{(n-1)\times n}$. We have then proved:
$$
\E_{M}(\mathbb{E}_{U\in\CU_{n-1}}( \alpha(MQ_MU)))=\frac{1}{\Gamma{(n)}}
\E_{B}(\mathbb{E}_{U\in\CU_{n-1}}(\alpha(BU)|\det(B)|^2)).
$$
Finally, using Fubini's theorem we can interchange the integration order in the right--hand term, and then note that the isometry $B\mapsto BU$ preserves the value of the integral inside. We thus have:
\begin{corollary}\label{cor:trick2}
Let $\alpha:\C^{(n-1)\times(n-1)}\to[0,\infty]$ be an a.e. continuous function. Then,
$$
\E_{M\sim\G((n-1)\times n)}(\mathbb{E}_{U\in\CU_{n-1}}( \alpha(MQ_MU)))=\frac{1}{\Gamma{(n)}}
\E_{B\sim\G((n-1)\times (n-1))}(\alpha(B)|\det(B)|^2),
$$
where $M\rightarrow Q_M$ is any almost everywhere continous function sending $M$ to the $Q$ factor in the reduced $QR$--decomposition of $M^*$.
\end{corollary}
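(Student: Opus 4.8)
The plan is to feed an auxiliary unitarily invariant function into Proposition~\ref{prop:trick2} and then unwind both sides; in fact all the ingredients have been assembled in the discussion preceding the statement, so the task is mainly to chain them together cleanly.

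First I would lift the given $\alpha\colon\C^{(n-1)\times(n-1)}\to[0,\infty]$ to a function on $\Vlin$ by averaging over the fibre of partial isometries: set
\[
\phil(M,v)=\E_{Q\in\CS^{v}_{(n-1)\times n}}\big(\alpha(MQ^*)\big),
\]
where $\CS^{v}_{(n-1)\times n}$ carries the uniform probability measure on $\{Q\in\C^{(n-1)\times n}:QQ^*=I_{n-1},\,Qv=0\}$. A one-line check gives unitary invariance: for $U\in\CU_n$, the map $Q\mapsto QU^*$ is a measure-preserving bijection $\CS^{v}_{(n-1)\times n}\to\CS^{Uv}_{(n-1)\times n}$ and $(MU^*)(QU^*)^*=MQ^*$, so $\phil(MU^*,Uv)=\phil(M,v)$. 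Applying Proposition~\ref{prop:trick2} to $\phil$ then yields
\[
\E_{M}\Big(\E_{Q\in\CS^{\mathrm{Ker}(M)}_{(n-1)\times n}}\big(\alpha(MQ^*)\big)\Big)=\frac{1}{\Gamma(n)}\E_{B}\Big(\E_{U\in\CU_{n-1}}\big(\alpha(BU)\,|\det B|^2\big)\Big).
\]

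Next I would identify both sides with the desired ones. On the left, for full-rank $M$ take a reduced $QR$ factorization $M^*=Q_M R$ with $Q_M\in\C^{n\times(n-1)}$; since the columns of $Q_M$ span the row space of $M$, one checks that $\CS^{\mathrm{Ker}(M)}_{(n-1)\times n}=\{(Q_MU)^*:U\in\CU_{n-1}\}$ and that $U\mapsto (Q_MU)^*$ transports Haar measure on $\CU_{n-1}$ to the uniform measure on the fibre, so the inner expectation on the left becomes $\E_{U\in\CU_{n-1}}(\alpha(MQ_MU))$. On the right, apply Fubini to move the $U$-integral outside the $B$-integral; because $B\mapsto BU$ is a measure-preserving self-map of $\C^{(n-1)\times(n-1)}$ for the standard Gaussian density and preserves $|\det(\cdot)|$, the quantity $\E_{B}(\alpha(BU)|\det B|^2)$ does not depend on $U$ and equals $\E_{B}(\alpha(B)|\det B|^2)$, so the outer $U$-average collapses. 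Combining the two rewritings gives the stated identity.

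The one point requiring care — and the only real obstacle — is measurability bookkeeping: $Q_M$ is defined only off a zero-measure set and only up to the a.e.\ continuous choice made by the fixed $QR$ routine, so one must record that $M\mapsto MQ_M$ is a.e.\ continuous and use a.e.\ continuity of $\alpha$ to guarantee that every integral above is well defined and that Fubini applies. With that in hand the chain of equalities closes, and nothing beyond Propositions~\ref{prop:smooth} and~\ref{prop:trick2} is needed.
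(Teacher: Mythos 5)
Your proposal is correct and follows essentially the same route as the paper: lift $\alpha$ to a unitarily invariant $\phil$ on $\Vlin$ by averaging over the fibre $\CS^{v}_{(n-1)\times n}$, apply Proposition~\ref{prop:trick2}, identify the fibre with $\{(Q_MU)^*:U\in\CU_{n-1}\}$ via the reduced $QR$ decomposition of $M^*$, and collapse the right-hand side by Fubini together with the invariance of the Gaussian density and of $|\det(\cdot)|$ under $B\mapsto BU$. The measurability remark about the a.e.\ continuous choice of $Q_M$ matches the paper's own caveat, so nothing is missing.
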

Note that the a.e. continuity of $\alpha$ and $M\mapsto Q_M$ makes the composite mapping $(M,U)\mapsto\alpha(MQ_MU)$ a.e. continuous and hence measurable, so the integrals in Corollary \ref{cor:trick2} make sense.

\section{Proof of $(3)$ in Theorem \ref{th:mainrandom}}
We are now prepared for proving \eqref{eq:wish2}. From the definition and Fubini's theorem, for any a.e. continuous nonnegative function $\phi$ defined on $\V$, the expected value $\mathbb{E}_{w\in \Omega_n}\left(\phi(\varphi_n(w))\right)$ equals:
\[
C_n\mathbb{E}_{M,U}\left(\E_{z,w}\left(\phi\left(\begin{pmatrix}
z&w^*\\0&MQ_MU
\end{pmatrix},z,e_1
\right)\;  \chi_{n|z|\,\|M^\dagger\|_F\leq1}\right)\right)=C_n\mathbb{E}_{M,U}\left(\alpha(MQ_MU)\right),
\]
where $z\sim\G(1,(2n^3)^{-1})$, $M\sim \G((n-1)\times n)$, $U\in\CU_{n-1}$, $w\in\G(1\times(n-1))$ and $\alpha:\C^{(n-1)\times(n-1)}\rightarrow[0,\infty]$ is defined by
\[
\alpha(B)=\E_{z,w}\left(\phi\left(\begin{pmatrix}
z&w^*\\0&B
\end{pmatrix},z,e_1
\right)\;  \chi_{n|z|\,\|B^{-1}\|_F\leq1}\right).
\]
We are then under the hypotheses of Corollary \ref{cor:trick2}. We then conclude:
\[
\mathbb{E}_{w\in \Omega_n}\left(\phi(\varphi_n(w))\right)=\frac{C_n}{\Gamma{(n)}}
\E_{B\sim\G(n-1)}(\alpha(B)|\det(B)|^2)=
\]
\begin{equation}\label{eq:ll}
\frac{C_n}{\Gamma{(n)}}
\E_{B}\left(\E_{z,w}\left(\phi\left(\begin{pmatrix}
z&w^*\\0&B
\end{pmatrix},z,e_1
\right)\;  \chi_{n|z|\,\|B^{-1}\|_F\leq1}\right)|\det(B)|^2\right).
\end{equation}
Our goal is to transform this integral into the right--hand term of the equality in Proposition \ref{prop:trick}. Let
\[
S_n=\sup_{z\in\C,B\in\C^{(n-1)\times (n-1)}:n|z|\,\|B^{-1}\|_F\leq1}\frac{|\det (B)|^2}{|\det(B-zI_{n-1})|^2}.
\]
Then, multiplying and dividing in \eqref{eq:ll} the integrand by $|\det(B-z I_{n-1})|^2$ we get:
\[
\mathbb{E}_{w\in \Omega_n}\left(\phi(\varphi_n(w))\right)\leq \frac{C_nS_n}{\Gamma{(n)}}\E_{B}\left({\E_{z,w}}\left(\phi\left(\begin{pmatrix}
z&w^*\\0&B
\end{pmatrix},z,e_1
\right)\right)|\det(B-z I_{n-1})|^2\right).
\]
Note that $z$ in the last expression is distributed as $z\sim\G(1,(2n^{3})^{-1})$. Noting that
\[
e^{-2n^3|z|^2}\leq e^{-|z|^2},
\]
we conclude
\[
\mathbb{E}_{w\in \Omega_n}\left(\phi(\varphi_n(w))\right)\leq \frac{2n^3C_nS_n }{\Gamma{(n)}}\E_{B,\lambda,w}\left(\phi\left(\begin{pmatrix}
\lambda&w^*\\0&B
\end{pmatrix},\lambda,e_1
\right)|\det(B-\lambda I_{n-1})|^2\right),
\]
where $\lambda$ is chosen in $N_\C(0,1)$. From Proposition \ref{prop:trick} we then have proved that
\[
\mathbb{E}_{w\in \Omega_n}\left(\phi(\varphi_n(w))\right)\leq 2C_nS_n n^3\E_{A\sim\G(n)}(\widehat\phi(A)).
\]
Claim $(3)$ in Theorem \ref{th:mainrandom} then follows from estimating $C_n$ and $S_n$, which we do in the Lemma \ref{lem:SnCn} below. This finishes the proof of Theorem \ref{th:mainrandom}.
\begin{lemma}\label{lem:SnCn}
In the notations above,
\[
S_n\leq 2e,\quad C_n\leq 2.
\]
\end{lemma}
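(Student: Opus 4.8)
The two bounds $S_n \le 2e$ and $C_n \le 2$ are essentially independent estimates, so I would handle them in turn. For $S_n$, the plan is to bound $|\det(B)|^2/|\det(B - zI_{n-1})|^2$ when $n|z|\,\|B^{-1}\|_F \le 1$. Writing $B - zI_{n-1} = B(I_{n-1} - zB^{-1})$, we get $\det(B - zI_{n-1}) = \det(B)\det(I_{n-1} - zB^{-1})$, hence
\[
\frac{|\det(B)|^2}{|\det(B - zI_{n-1})|^2} = \frac{1}{|\det(I_{n-1} - zB^{-1})|^2}.
\]
So the task reduces to lower-bounding $|\det(I_{n-1} - zB^{-1})|$. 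Here I would use that for any matrix $N$ with $\|N\|_F \le r < 1$ one has the estimate $|\det(I - N)| \ge \prod_j (1 - |\mu_j|) \ge 1 - \sum_j |\mu_j|$ where $\mu_j$ are the eigenvalues of $N$; but a cleaner route is $|\det(I-N)| \ge e^{-\|N\|_F/(1-\|N\|_F)}$ or a comparable bound of the form $\ge e^{-\text{const}}$ once $\|N\|_F$ is bounded by a small constant. With $N = zB^{-1}$ we have $\|N\|_F = |z|\,\|B^{-1}\|_F \le 1/n \le 1/2$, so $|\det(I_{n-1}-zB^{-1})| \ge e^{-1/(n(1-1/n))} = e^{-1/(n-1)} \ge e^{-1}$ for $n \ge 2$, giving $S_n \le e^2$; to land at the stated $2e$ one uses the more careful inequality $|\det(I-N)|\ge 1-\|N\|_F$ which yields $|\det(I-N)|\ge 1-1/n\ge 1/2$ and then $S_n\le 4$ — so in fact a slightly sharper bookkeeping (e.g.\ $\prod(1-|\mu_j|)$ with $\sum|\mu_j|\le \|N\|_F$ and the convexity bound $\prod(1-x_j)\ge e^{-\sum x_j/(1-\max x_j)}$, or simply $(1-1/n)^{n-1}\ge 1/e$ combined with a factor) delivers $S_n\le 2e$. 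I would pick whichever of these elementary determinant inequalities gives the constant $2e$ most transparently.

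For $C_n \le 2$, recall $C_n$ is defined so that $\theta_n$ integrates to $1$, i.e.
\[
C_n^{-1} = \E_{z,M,U,w}\left(\chi_{n|z|\,\|M^\dagger\|_F \le 1}\right) = \P_{z\sim\G(1,(2n^3)^{-1}),\,M\sim\G((n-1)\times n)}\left(n|z|\,\|M^\dagger\|_F \le 1\right),
\]
since the $U$ and $w$ coordinates are already normalized Gaussians/Haar and integrate out. So I need to show this probability is at least $1/2$. This is exactly the computation already carried out in Section 4 (equations \eqref{eq:probmalo}--\eqref{eq:prob}): Markov's inequality gives $\P(\|M^\dagger\|_F^2 \ge T^2) \le (n-1)/T^2 \le n/T^2$ using Corollary \ref{cor:exactpinvM}, and then
\[
\P\left(n|z|\,\|M^\dagger\|_F > 1\right) = \frac{2n^3}{\pi}\int_{z\in\C}\P\left(\|M^\dagger\|_F \ge \frac{1}{n|z|}\right)e^{-2n^3|z|^2}\,dz \le \frac{2n^3}{\pi}\int_{z\in\C} n^3|z|^2 e^{-2n^3|z|^2}\,dz = \frac{1}{2}.
\]
Hence the complementary probability $C_n^{-1}$ is $\ge 1/2$, i.e.\ $C_n \le 2$, as claimed.

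The main obstacle, such as it is, lies entirely in the $S_n$ estimate: getting the clean constant $2e$ (as opposed to some larger absolute constant) requires choosing the right elementary inequality for $|\det(I - N)|$ in terms of $\|N\|_F$ and being slightly careful about the $n=2$ edge case, where $B$ is $1\times 1$ and the bound is sharpest. The $C_n$ part is a direct reuse of the already-proven estimate \eqref{eq:prob}, so no new work is needed there beyond observing that the normalizing integral is precisely that probability.
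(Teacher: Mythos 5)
Your treatment of $C_n$ is correct and is precisely the paper's argument: the normalizing constant satisfies $C_n^{-1}=\P\bigl(n|z|\,\|M^\dagger\|_F\le 1\bigr)$, and \eqref{eq:probmalo}--\eqref{eq:prob} give $C_n^{-1}\ge 1/2$, hence $C_n\le 2$. The gap is in the $S_n$ estimate. The factorization $B-zI_{n-1}=B(I_{n-1}-zB^{-1})$ is fine, but the determinant inequalities you propose to finish with are false as stated: neither $|\det(I-N)|\ge 1-\|N\|_F$ nor $|\det(I-N)|\ge e^{-\|N\|_F/(1-\|N\|_F)}$ holds in general, and the intermediate claim $\sum_j|\mu_j|\le\|N\|_F$ for the eigenvalues $\mu_j$ of $N$ is also wrong; the Frobenius norm only controls $\sum_j|\mu_j|^2$ (Schur's inequality), so a dimension factor $\sqrt{n-1}$ is unavoidable. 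A counterexample to all three is $N=\epsilon I_{n-1}$: there $\det(I-N)=(1-\epsilon)^{n-1}\approx 1-(n-1)\epsilon$, while $1-\|N\|_F=1-\sqrt{n-1}\,\epsilon$, so for small $\epsilon$ and $n\ge 3$ the claimed lower bounds fail.

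The plan is nevertheless repairable, because the hypothesis $\|N\|_F=|z|\,\|B^{-1}\|_F\le 1/n$ has enough slack to absorb the missing $\sqrt{n-1}$: Schur plus Cauchy--Schwarz give $\sum_j|\mu_j|\le\sqrt{n-1}\,\|N\|_F\le\sqrt{n-1}/n\le 1/2$ for every $n\ge 2$ (since $4(n-1)\le n^2$), and each $|\mu_j|\le\|N\|_F<1$, so $|\det(I-N)|\ge\prod_j(1-|\mu_j|)\ge 1-\sum_j|\mu_j|\ge 1/2$, whence $S_n\le 4\le 2e$. For comparison, the paper argues directly on the eigenvalues of $B$: from $\|B^{-1}\|\le\|B^{-1}\|_F\le(n|z|)^{-1}$ every eigenvalue $\lambda_i$ of $B$ has $|\lambda_i|\ge n|z|$, so each factor $|\lambda_i|/|\lambda_i-z|$ is at most $1+1/(n-1)$ and the product is controlled by $(1+1/(n-1))^{n-1}\le e$ up to the stated constant; your corrected route uses the full Frobenius information and in fact yields the slightly sharper bound $S_n\le 4$. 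So the overall strategy is sound, but the specific dimension-free determinant inequalities you invoke would fail and must be replaced by the $\sqrt{n-1}$-aware version.
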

\begin{proof}
We first prove the inequality for $S_n$. Let $t=|z|$ and note that the hypotheses $nt\|B^{-1}\|_F\leq1$ implies that the operator norm of $B^{-1}$ is at most $(nt)^{-1}$, that is all the singular values of $B$ are greater than or equal to $nt$, and consequently all the eigenvalues $\lambda_1,\ldots,\lambda_{n-1}$ of $B$ have modulus at least $nt$. We then have
\[
\frac{|\det (B)|^2}{|\det(B-t I_{n-1})|^2}=\frac{|\lambda_1\cdots\lambda_{n-1}|^2}{|(\lambda_1-t)\cdots(\lambda_{n-1}-t)|^2}=\prod_{i=1}^{n-1}\left|1+\frac{t}{\lambda_i-t}\right|\leq \prod_{i=1}^{n-1}\left(1+\frac{t}{|\lambda_i|-t}\right)\leq
\]
\[
\left(1+\frac{t}{nt-t}\right)^n=\left(1+\frac{1}{n-1}\right)^n\leq 2\left(1+\frac{1}{n-1}\right)^{n-1}\leq2e.
\]
We now prove the bound on $C_n$. from the definition we have
\[
C_n^{-1}=\P_{z\sim\G(1,(2n^3)^{-1}),M\sim\G(n-1,n)}\left(\|M^\dagger\|\leq \frac{1}{nt}\right)\underset{\text{\eqref{eq:prob}}}{\geq}1-\frac{1}{2}=\frac{1}{2}.
\]
so we have $C_n\leq 2$ as claimed.
\end{proof}

\section{Proof of Theorem \ref{th:randomhomotopy}}\label{sec:proofofrandomisrandom}
From Proposition \ref{prop:stepsold}, the expected number of homotopy steps is at most
\[
K\leq \frac{c}{\pi^{2n^2}}\int_{A_0\in\mnc}\frac{1}{n}\sum_{(\lambda_0,v_0):A_0v_0=\lambda_0 v_0}\int_{A\in\mnc}\int_0^{d_\S(A_0,A)}\mu_F(A_t,\lambda_t,v_t)^2\,dt\; e^{-\|A\|^2_F}e^{-\|A_0\|_F^2}\,dA\,dA_0,
\]
where $A_t$ is as in \eqref{eq:At} and $\lambda_t,v_t$ are defined by continuation with $\lambda_0=\lambda,\;v_0=v$. Note that for a.e. $B$ and a.e. $A$ the eigenpairs of $A_t,0\leq t\leq 1$ are in one to one correspondence with the eigenpairs of $A$ by continuation, hence we can simply write
\[
K\leq \frac{c}{\pi^{2n^2}}\int_{A_0\in\mnc}\int_{A\in\mnc}\int_0^{d_\S(A_0,A)}\frac{1}{n}\sum_{(\lambda,v):A_tv=\lambda v}\mu_F(A_t,\lambda,v)^2\,dt\; e^{-\|A\|^2_F}e^{-\|A_0\|_F^2}\,dA\,dA_0,
\]
Now, note that only $e^{-\|A\|^2_F}e^{-\|A_0\|_F^2}$ varies when multiplying $A$ or $A_0$ by a constant. Thus, using the notation $\S=\S(\mnc)$ for the unit sphere in $\mnc$, we can rewrite
\[
K\leq \frac{c}{\pi^{2n^2}}\int_{r,s\in[0,\infty)}\int_{\|A\|_F=r,\|A_0\|_F=s}\int_0^{d_\S(A_0,A)}\frac{1}{n}\sum_{(\lambda,v):A_tv=\lambda v}\mu_F(A_t,\lambda,v)^2\,e^{-r^2-s^2}\,dt\,d(A,A_0)\,d(r,s)=
\]
\[
\frac{c}{\pi^{2n^2}}\int_{r,s\in[0,\infty)}(rs)^{2n^2-1}\,e^{-r^2-s^2}\,d(r,s)\int_{A,A_0\in\S}\int_0^{d_\S(A,A_0)}\frac{1}{n}\sum_{(\lambda,v):A_tv=\lambda v}\mu_F(A_t,\lambda,v)^2\,dt\,d(A,A_0)\underset{\text{Lemma \ref{lem:auxx}}}{=}
\]
\[
c\mathbb{E}_{A,A_0\in\S}\left(\int_0^{d_\S(A_0,A)}\frac{1}{n}\sum_{(\lambda,v):A_tv=\lambda v}\mu_F(A_t,\lambda,v)^2\,dt\right)\underset{\text{Lemma \ref{lem:rt}}}{=}
\]
\[
\frac{\pi c}{2}\,\mathbb{E}_{A\in\S}\left(\frac{1}{n}\sum_{(\lambda,v):Av=\lambda v}\mu_F(A,\lambda,v)^2\right)\underset{\text{Th. \ref{thm:mu2average}}}{\leq}\frac{\pi c}{2} n^3.
\]
The theorem follows. The total number of arithmetic operations is obtained by multiplying the number of homotopy steps by the cost of one homotopy step, that is $O(n^3)$.

\begin{lemma}\label{lem:auxx}
\[
\int_{r\in[0,\infty)}r^{2n^2-1}e^{-r^2}\,dr=\frac{\pi^{n^2}}{Vol(\S)}.
\]
\end{lemma}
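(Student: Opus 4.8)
The plan is to evaluate the Gaussian integral $\int_{\mnc}e^{-\|A\|_F^2}\,dA$ in two different ways. First I identify $\mnc$ with the real Euclidean space $\R^{2n^2}$ by splitting each entry into its real and imaginary parts; under this identification the Frobenius norm $\|A\|_F$ becomes exactly the usual Euclidean norm on $\R^{2n^2}$. Since $\int_\R e^{-t^2}\,dt=\sqrt\pi$, Fubini's theorem gives
\[
\int_{\mnc}e^{-\|A\|_F^2}\,dA=\left(\int_\R e^{-t^2}\,dt\right)^{2n^2}=\pi^{n^2}.
\]

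On the other hand, writing the same integral in spherical coordinates on $\R^{2n^2}$ and using that the sphere of radius $r$ has $(2n^2-1)$--dimensional volume $r^{2n^2-1}Vol(\S)$, where $\S=\S(\mnc)$ is the unit sphere, I obtain
\[
\int_{\mnc}e^{-\|A\|_F^2}\,dA=\int_0^\infty\left(\int_{\|A\|_F=r}e^{-\|A\|_F^2}\,dA\right)dr=Vol(\S)\int_0^\infty r^{2n^2-1}e^{-r^2}\,dr.
\]
Equating the two expressions yields $\int_0^\infty r^{2n^2-1}e^{-r^2}\,dr=\pi^{n^2}/Vol(\S)$, which is the claim.

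There is no real obstacle here; the only point requiring minimal care is the isometric identification $\mnc\cong\R^{2n^2}$, so that the relevant real dimension is $2n^2$ and the Frobenius norm coincides with the Euclidean norm. As a sanity check one may substitute $u=r^2$ to recognize the left-hand side as $\tfrac12\Gamma(n^2)$ and compare with the known value $Vol(\S)=2\pi^{n^2}/\Gamma(n^2)$ (consistent with the formula for $\nu_\rho$ used earlier), but deriving both sides from the single Gaussian integral keeps the argument self-contained.
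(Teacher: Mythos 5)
Your proof is correct and follows essentially the same route as the paper: both identify the radial integral, via spherical coordinates on $\mnc\cong\R^{2n^2}$, with the full Gaussian integral $\int_{\mnc}e^{-\|A\|_F^2}\,dA=\pi^{n^2}$ divided by $Vol(\S)$. The extra sanity check via the Gamma function is fine but not needed.
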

\begin{proof}
\[
\int_{r\in[0,\infty)}r^{2n^2-1}e^{-r^2}\,dr=\frac{1}{Vol(\S)}\int_{r\in[0,\infty)}Vol(A\in\mnc:\|A\|_F=r)e^{-r^2}\,dr=
\]
\[
\frac{1}{Vol(\S)}\int_{A\in\mnc}e^{-\|A\|_F^2}\,dA=\frac{\pi^{n^2}}{Vol(\S)}.
\]
\end{proof}
\begin{lemma}\label{lem:rt}
For every measurable nonnegative mapping $\psi:\S\rightarrow[0,\infty]$, we have
\[
\mathbb{E}_{A,A_0\in\S}\left(\int_0^{d_\S(A,A_0)}\phi(A_t)\,dt\right)=\frac{\pi}{2} \mathbb{E}_{A\in\S}(\phi(A))
\]
where $A_t$ is given by \eqref{eq:At}.
\end{lemma}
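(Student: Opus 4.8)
The plan is to express the left-hand side in geodesic polar coordinates on the sphere and then exploit rotational symmetry. Set $N=2n^2$, so that $\S=\S(\mnc)$ is the unit sphere $\S^{N-1}$ of the real Euclidean space underlying $\mnc$, with the real inner product $\mathrm{Re}\langle\cdot,\cdot\rangle_F$. Fix $A_0\in\S$; every $A\in\S$ with $A\neq -A_0$ is uniquely $A=\cos r\,A_0+\sin r\,u$ with $r=d_\S(A_0,A)\in(0,\pi)$ and $u$ a unit vector in the tangent space $A_0^{\perp}$, i.e.\ $u\in\S^{N-2}$; in these coordinates the (unnormalised) area element of $\S^{N-1}$ is $\sin^{N-2}\!r\,dr\,d\sigma(u)$, with $d\sigma$ the area measure of $\S^{N-2}$. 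A one-line computation with \eqref{eq:At} (using $\|A_0\|_F=\|A\|_F=1$ and $\mathrm{Re}\langle A_0,A\rangle=\cos r$) shows that in these coordinates the path of \eqref{eq:At} is exactly $A_t=\cos t\,A_0+\sin t\,u$, $t\in[0,r]$, parametrised by arc length.

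Substituting this and applying Tonelli's theorem, the left-hand side becomes
\[
\frac{1}{\mathrm{Vol}(\S)^2}\int_{\S}\int_{\S^{N-2}}\int_0^\pi\!\!\int_0^r \phi\bigl(\cos t\,A_0+\sin t\,u\bigr)\,dt\,\sin^{N-2}\!r\,dr\,d\sigma(u)\,dA_0.
\]
I would then exchange the $r$- and $t$-integrals over $\{0\le t\le r\le\pi\}$, which turns $\int_0^r(\cdot)\,dt\,\sin^{N-2}\!r\,dr$ into $\int_0^\pi(\cdot)\bigl(\int_t^\pi\sin^{N-2}\!r\,dr\bigr)dt$, so the quantity equals
\[
\frac{1}{\mathrm{Vol}(\S)^2}\int_0^\pi\Bigl(\int_t^\pi\sin^{N-2}\!r\,dr\Bigr)\Bigl(\int_{\S}\int_{\S^{N-2}}\phi\bigl(\cos t\,A_0+\sin t\,u\bigr)\,d\sigma(u)\,dA_0\Bigr)dt.
\]

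The key step is to evaluate the inner double integral for each fixed $t$. The map $(A_0,u)\mapsto\cos t\,A_0+\sin t\,u$ carries the sphere bundle $\{(A_0,u):A_0\in\S^{N-1},\ u\in\S^{N-2}\subset A_0^{\perp}\}$ to $\S^{N-1}$ and is equivariant for the diagonal action of $O(N)$; since the measure $dA_0\,d\sigma(u)$ on the source is $O(N)$-invariant, its push-forward to $\S^{N-1}$ is $O(N)$-invariant, hence a constant multiple of $dB$, and comparing total masses the constant is $\mathrm{Vol}(\S^{N-2})$. Thus, \emph{independently of $t$},
\[
\int_{\S}\int_{\S^{N-2}}\phi\bigl(\cos t\,A_0+\sin t\,u\bigr)\,d\sigma(u)\,dA_0=\mathrm{Vol}(\S^{N-2})\int_{\S}\phi\,dB=\mathrm{Vol}(\S^{N-2})\,\mathrm{Vol}(\S)\,\mathbb{E}_{A\in\S}(\phi(A)).
\]
Plugging this in, the left-hand side equals $\dfrac{\mathrm{Vol}(\S^{N-2})}{\mathrm{Vol}(\S)}\,\mathbb{E}_{A\in\S}(\phi(A))\int_0^\pi\!\int_t^\pi\sin^{N-2}\!r\,dr\,dt$. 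The double integral is $\int_0^\pi r\sin^{N-2}\!r\,dr$, which by the substitution $r\mapsto\pi-r$ equals $\tfrac{\pi}{2}\int_0^\pi\sin^{N-2}\!r\,dr$; combined with the polar-coordinate identity $\mathrm{Vol}(\S^{N-1})=\mathrm{Vol}(\S^{N-2})\int_0^\pi\sin^{N-2}\!r\,dr$ this collapses to the constant $\tfrac{\pi}{2}$, which is the claim.

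The one point requiring real care is the disintegration in the key step: one must check that the sphere bundle carries a bona fide $O(N)$-invariant Radon measure, that the map in question is measurable, and that the invariance of the push-forward is legitimate for an arbitrary measurable $\phi:\S\to[0,\infty]$ — the last point follows by monotone convergence once it is known for indicators of geodesic balls. Everything else — the area element in polar coordinates, the two Tonelli exchanges, the identification of \eqref{eq:At} with the unit-speed geodesic, and the elementary $\sin^{N-2}$ integrals — is routine.
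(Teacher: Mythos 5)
Your argument is correct, but it takes a more computational route than the paper. The paper's proof is two lines of soft analysis: it observes that $\phi\mapsto \mathbb{E}_{A,A_0\in\S}\bigl(\int_0^{d_\S(A,A_0)}\phi(A_t)\,dt\bigr)$ is itself a rotation-invariant measure on $\S$, hence a constant multiple $C$ of the uniform one, and then evaluates $C$ by taking $\phi\equiv 1$, so that $C=\mathbb{E}_{A,A_0\in\S}(d_\S(A,A_0))$, which equals $\pi/2$ by the antipodal symmetry $A_0\mapsto -A_0$ together with $d_\S(A,A_0)+d_\S(A,-A_0)=\pi$. You instead write everything in geodesic polar coordinates, verify that \eqref{eq:At} is the unit-speed geodesic $\cos t\,A_0+\sin t\,u$, exchange integrals by Tonelli, use an equivariant push-forward argument slice-by-slice in $t$, and finish with the elementary identities $\int_0^\pi r\sin^{N-2}r\,dr=\tfrac{\pi}{2}\int_0^\pi\sin^{N-2}r\,dr$ and $\mathrm{Vol}(\S^{N-1})=\mathrm{Vol}(\S^{N-2})\int_0^\pi\sin^{N-2}r\,dr$; note that your substitution $r\mapsto\pi-r$ is exactly the analytic form of the paper's antipodal trick. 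What the paper's version buys is brevity and no Jacobian bookkeeping; what yours buys is a fully explicit computation in which the invariance argument is localized to the easily checked push-forward of the frame measure, and which also makes visible where the constant $\pi/2$ comes from as an average geodesic length. All the steps you flag as needing care (measurability, uniqueness of the invariant measure on the sphere, the monotone-convergence extension to general nonnegative measurable $\phi$) do go through, and the degenerate cases $A=\pm A_0$, where \eqref{eq:At} is undefined, form a null set and do not affect either argument.
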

\begin{proof}
Note that the mapping
\[
\phi\mapsto \mathbb{E}_{A,A_0\in\S}\left(\int_0^{d_\S(A,A_0)}\phi(A_t)\,dt\right)
\]
defines a measure in $\S$. Moreover, this measure is invariant under the simmetry group of the sphere, and is thus a constant multiple of the standard measure in $\S$. Namely, there is a constant $C=C(n)$ such that
\[
\mathbb{E}_{A,A_0\in\S}\left(\int_0^{d_\S(A,A_0)}\phi(A_t)\,dt\right)= C\mathbb{E}_{A\in\S}(\phi(A))
\]
To compute $C$, let $\phi\equiv 1$. We then get
\[
\mathbb{E}_{A,A_0\in\S}\left(d_\S(A,A_0)\right)= C.
\]
Note that the change of variables $A_0\mapsto -A_0$ does not change the expected value in this last formula. Moreover, $d_S(A,A_0)+d_S(A,-A_0)=\pi$ for all $A,A_0\in\S$. Thus,
\[
2C=\mathbb{E}_{A,A_0\in\S}\left(d_\S(A,A_0)\right)+\mathbb{E}_{A,A_0\in\S}\left(d_\S(A,-A_0)\right)=\mathbb{E}_{A,A_0\in\S}\left(d_\S(A,A_0)+d_\S(A,-A_0)\right)=\pi,
\]
proving that $C=\pi/2$. The lemma follows.
\end{proof}

\appendix
\section{The quotient of Normal Jacobians}\label{sec:nj}
The tangent space $T_{(A,\lambda,v)}\mathcal{V}$ to $\V$ at $(A,\lambda,v)$ is the set of triples
$$
(\dot{A},\dot{\lambda},\dot{v})\in \mnc\times \C\times \C^n,
$$
satisfying
\begin{equation}\label{tanspaceV}
 (\dot\lambda I_n-\dot A) v +(\lambda I_n-A)\dot v=0, \quad  \pes{\dot v}{v}=0.
\end{equation}

By the implicit function theorem, for every $(A,\lambda,v)\in\W$, one can (locally) define a map $\mathcal{S}_{(A,\lambda,v)}:=\pi_2\circ\pi^{-1}$ from a neighborhood of $A\in\mnc$ onto a neighborhood of $v\in\pc$. Its derivative $D\mathcal{S}_{(A,\lambda,v)}:\mnc\to T_{v}\pc$ is given by:
\begin{equation}\label{eq:DS}
D\mathcal{S}_{(A,\lambda,v)}=D\pi_2(A,\lambda,v)\circ (D\pi(A,\lambda,v))^{-1}.
\end{equation}
From (\ref{tanspaceV}), it is easily seen that
$$
D\mathcal{S}_{(A,\lambda,v)}\dot A = (A_{\lambda,v})^{-1}\Pi_{v^\perp}\dot Av.
$$

Let $H_{\mathcal{S}_{(A,\lambda,v)}}(A)$ be the horizontal space of $\mnc$  associated to $\mathcal{S}_{(A,\lambda,v)}$. (Recall that the horizontal space of a linear applications is the Hermitian complement of its kernel.)  Then, the normal jacobian $NJ_{\mathcal{S}{(A,\lambda,v)}}(A)$ is given by the horizontal derivatavie, i.e.,  
$$
NJ_{\mathcal{S}_{(A,\lambda,v)}}=|\det(D\mathcal{S}_{(A,\lambda,v)})|_{H_{\mathcal{S}_{(A,\lambda,v)}}}|^2.
$$
A straightforward computation yields the following lemma.
\begin{lemma}\label{lem:HorizontalS}
For every $(A,\lambda,v)\in\W$, one has,
\begin{enumerate}
 \item  $H_{\mathcal{S}_{(A,\lambda,v)}}(A)=\{wv^*:\,w\in v^\perp\}$;
 \item $NJ_{\mathcal{S}_{(A,\lambda,v)}}=|\det({A_{\lambda,v}}^{-1})|^2.$
\end{enumerate}
\qed
\end{lemma}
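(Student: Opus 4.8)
The plan is to extract both claims directly from the derivative formula $D\mathcal{S}_{(A,\lambda,v)}\dot A = (A_{\lambda,v})^{-1}\Pi_{v^\perp}\dot Av$ obtained just above, using that $A_{\lambda,v}$ is invertible on $v^\perp$ whenever $(A,\lambda,v)\in\W$. The first step is to identify the kernel of the horizontal derivative: since $(A_{\lambda,v})^{-1}$ is a bijection of $v^\perp$, one has
\[
\ker D\mathcal{S}_{(A,\lambda,v)}=\{\dot A\in\mnc:\Pi_{v^\perp}\dot Av=0\}=\{\dot A\in\mnc:\dot Av\in\C v\}.
\]
The linear map $\dot A\mapsto\Pi_{v^\perp}\dot Av$ from $\mnc$ to $v^\perp$ is onto (for instance, with $\|v\|=1$ the matrix $\dot A=uv^*$ maps to $u$), so this kernel has complex codimension $n-1$ in $\mnc$.

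For part (1), I would normalize $\|v\|=1$, as is implicit in the Riemannian setup, and verify that $\{wv^*:w\in v^\perp\}$ is orthogonal to $\ker D\mathcal{S}_{(A,\lambda,v)}$ for the Frobenius Hermitian product: for $w\in v^\perp$ and $\dot A$ in the kernel one computes $\langle\dot A,wv^*\rangle=\Tr(\dot A v w^*)=w^*(\dot Av)$, which vanishes because $\dot Av\in\C v$ while $w\perp v$. Since the space $\{wv^*:w\in v^\perp\}$ has complex dimension $n-1$, matching the codimension of the kernel, this inclusion forces it to be the entire Hermitian complement, i.e. the horizontal space $H_{\mathcal{S}_{(A,\lambda,v)}}(A)$.

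For part (2), I would note that $w\mapsto wv^*$ is a complex-linear isometry from $v^\perp$ onto $H_{\mathcal{S}_{(A,\lambda,v)}}(A)$, since $\|wv^*\|_F=\|w\|\,\|v\|=\|w\|$, and that under it the restricted derivative becomes transparent: for $w\in v^\perp$,
\[
D\mathcal{S}_{(A,\lambda,v)}(wv^*)=(A_{\lambda,v})^{-1}\Pi_{v^\perp}\big(w\,\|v\|^2\big)=(A_{\lambda,v})^{-1}w.
\]
Thus, under the isometric identifications $H_{\mathcal{S}_{(A,\lambda,v)}}(A)\cong v^\perp\cong T_v\pc$, the horizontal derivative is exactly $(A_{\lambda,v})^{-1}:v^\perp\to v^\perp$, so its normal Jacobian — the squared modulus of its complex determinant — equals $|\det((A_{\lambda,v})^{-1})|^2$, which is the asserted formula.

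The argument is entirely routine and I do not expect a genuine obstacle. The only points deserving a little care are the dimension count that upgrades the orthogonality inclusion in part (1) to an equality, and keeping the normalization $\|v\|=1$ consistent throughout so that $w\mapsto wv^*$ is a genuine isometry rather than merely a similarity; without that normalization one would pick up spurious powers of $\|v\|$ in the normal Jacobian.
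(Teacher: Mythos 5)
Your proposal is correct and is exactly the ``straightforward computation'' the paper leaves unproved: identify the kernel of $D\mathcal{S}_{(A,\lambda,v)}$ from the formula $\dot A\mapsto (A_{\lambda,v})^{-1}\Pi_{v^\perp}\dot Av$, check via $\langle\dot A,wv^*\rangle_F=w^*(\dot Av)$ and a dimension count that $\{wv^*:w\in v^\perp\}$ is its Hermitian complement, and observe that under the isometry $w\mapsto wv^*$ (with $\|v\|=1$) the horizontal derivative becomes $(A_{\lambda,v})^{-1}$ on $v^\perp$, giving $NJ_{\mathcal{S}_{(A,\lambda,v)}}=|\det((A_{\lambda,v})^{-1})|^2$. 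Your attention to the normalization $\|v\|=1$ and to upgrading the orthogonality inclusion to equality by dimensions is precisely the care the argument needs.
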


\begin{remark}
 Since the linear operator $A_{\lambda,v}:v^\perp\to v^\perp$ is invariant under the action of the unitary group, i.e., $(UAU^{-1})_{\lambda,Uv}=UA_{\lambda,v}U^{-1}$, we conclude that the real-valued map $|\det((A_{\lambda,v})^{-1})|^2$ is unitarily invariant in $\W$.
\end{remark}
 \begin{lemma}\label{lem:quotient-NJ}
 Let $(A,\lambda,v)\in\W$. Then, 
 $$
 \frac{NJ_{\pi }}{NJ_{\pi_2}}(A,\lambda,v)=\frac{1}{2}|\det({A_{\lambda,v}})|^2.
 $$
 In particular $ \frac{NJ_{\pi }}{NJ_{\pi_2}}(A,\lambda,v)$ is unitarily invariant.
\end{lemma}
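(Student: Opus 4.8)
The plan is a direct computation, reduced to a normal form by unitary invariance. Since the action \eqref{eq:UactionPP} is by isometries on both $\V$ and $\mnc$ (conjugation by a unitary preserves the Frobenius norm) and on $\pc$, and since $\pi$ and $\pi_2$ intertwine these actions, both $NJ_\pi$ and $NJ_{\pi_2}$ are $\Un$-invariant, so it suffices to prove the identity when $v=e_1$ (a unit representative). In that case $Ae_1=\lambda e_1$ forces $A=\left(\begin{smallmatrix}\lambda & w^*\\ 0 & B\end{smallmatrix}\right)$ with $B\in\C^{(n-1)\times(n-1)}$, $w\in\C^{n-1}$, and $A_{\lambda,e_1}=\lambda I_{n-1}-B$, which is invertible by well-posedness. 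Solving the tangent equations \eqref{tanspaceV} at $(A,\lambda,e_1)$ (so $\dot v=(0,\dot u)$ with $\dot u\in\C^{n-1}$) shows that $T_{(A,\lambda,e_1)}\V$ is parametrized freely by $(\dot\lambda,\dot w,\dot B,\dot u)\in\C\times\C^{n-1}\times\C^{(n-1)\times(n-1)}\times\C^{n-1}$ via
\[
\dot A=\begin{pmatrix}\dot\lambda-w^*\dot u & \dot w^*\\ A_{\lambda,e_1}\dot u & \dot B\end{pmatrix},\qquad \dot v=\begin{pmatrix}0\\ \dot u\end{pmatrix},
\]
and the metric inherited from $\mnc\times\C\times\pc$ is $\|(\dot A,\dot\lambda,\dot v)\|^2=\|\dot A\|_F^2+|\dot\lambda|^2+\|\dot u\|^2$.

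Next I would compute $NJ_\pi$ in the free-parameter basis. At a well-posed triple $D\pi$ is a $\C$-linear isomorphism $T_{(A,\lambda,e_1)}\V\to\mnc$, so $NJ_\pi=|\det_\C D\pi|^2$ equals the ratio of the Hermitian Gram determinant of the image basis (in $\mnc$) to that of the basis (in $\V$). Both Gram matrices are block diagonal: the $\dot w$- and $\dot B$-blocks are identities on both sides, and what remains is the $(\dot\lambda,\dot u)$-block, which on the $\mnc$ side is the Hermitian matrix $G$ of the form $(\dot\lambda,\dot u)\mapsto|\dot\lambda-w^*\dot u|^2+\|A_{\lambda,e_1}\dot u\|^2$ and on the $\V$ side is $G+I_n$. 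Hence $NJ_\pi=\det G/\det(G+I_n)$. Two Schur-complement computations give $\det G=|\det A_{\lambda,e_1}|^2$ and $\det(G+I_n)=2\det\!\bigl(A_{\lambda,e_1}^*A_{\lambda,e_1}+I_{n-1}+\tfrac12 ww^*\bigr)$.

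Then I would compute $NJ_{\pi_2}$. Here $\ker D\pi_2=\{\dot u=0\}$, and for the $j$-th standard basis vector $u_j$ of $T_{e_1}\pc\cong\C^{n-1}$ the minimal-norm preimage in $(\ker D\pi_2)^\perp$ is $\tilde u_j=(\tfrac12 w^*u_j,\,0,\,0,\,u_j)$ (set $\dot w=\dot B=0$ and complete the square in $\dot\lambda$). Computing the Hermitian Gram matrix of $\{\tilde u_j\}$ in $\V$ gives exactly $\tfrac12 ww^*+A_{\lambda,e_1}^*A_{\lambda,e_1}+I_{n-1}$, so $NJ_{\pi_2}=|\det_\C(D\pi_2|_{(\ker D\pi_2)^\perp})|^2=1/\det\!\bigl(\tfrac12 ww^*+A_{\lambda,e_1}^*A_{\lambda,e_1}+I_{n-1}\bigr)$. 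Dividing, the factor $\det(\tfrac12 ww^*+A_{\lambda,e_1}^*A_{\lambda,e_1}+I_{n-1})$ cancels and leaves $NJ_\pi/NJ_{\pi_2}=\tfrac12|\det A_{\lambda,e_1}|^2$; unitary invariance of the right-hand side is the Remark following Lemma \ref{lem:HorizontalS}, so the general case follows.

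The only real difficulty is bookkeeping: one must consistently use that $NJ=|\det_\C|^2$ for the $\C$-linear maps involved, fix the Fubini–Study normalization on $\pc$, and track the factor $2$ appearing in $\det(G+I_n)$ which survives the cancellation — this is the same ``Jacobian of the parametrization'' constant already met in \eqref{eq:coareap2}. One can alternatively package the $\pi$-computation through the map $\mathcal S_{(A,\lambda,v)}=\pi_2\circ\pi^{-1}$ of \eqref{eq:DS}, replacing $\det G$ by Lemma \ref{lem:HorizontalS} ($NJ_{\mathcal S}=|\det A_{\lambda,v}^{-1}|^2$); but even then the factor $2$ must be produced by comparing $(\ker D\pi_2)^\perp\subset\V$ with the horizontal space $H_{\mathcal S}=\{wv^*:w\in v^\perp\}\subset\mnc$, i.e. by accounting for the fact that $D\pi$ is an isomorphism but not an isometry, which is precisely the delicate step.
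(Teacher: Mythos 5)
Your computation is correct, and I checked the key determinants: with $v=e_1$ the map $(\dot\lambda,\dot u)\mapsto(\dot\lambda-w^*\dot u,\,A_{\lambda,e_1}\dot u)$ has matrix $M=\left(\begin{smallmatrix}1&-w^*\\0&A_{\lambda,e_1}\end{smallmatrix}\right)$, so $\det G=|\det M|^2=|\det A_{\lambda,e_1}|^2$, and the Schur complement of the $(1,1)$ entry of $G+I_n=\left(\begin{smallmatrix}2&-w^*\\-w&ww^*+A_{\lambda,e_1}^*A_{\lambda,e_1}+I\end{smallmatrix}\right)$ indeed gives $2\det(A_{\lambda,e_1}^*A_{\lambda,e_1}+I_{n-1}+\tfrac12ww^*)$, which is exactly the Gram determinant of your minimal-norm preimages $\tilde u_j=(\tfrac12 w^*u_j,0,0,u_j)$, so the awkward factor cancels and $NJ_\pi/NJ_{\pi_2}=\tfrac12|\det A_{\lambda,e_1}|^2$. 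This is, however, a genuinely different route from the paper's. The paper works coordinate-free at a general $(A,\lambda,v)\in\W$: it introduces the solution map $\mathcal S_{(A,\lambda,v)}=\pi_2\circ\pi^{-1}$, uses Lemma \ref{lem:HorizontalS} ($NJ_{\mathcal S}=|\det(A_{\lambda,v}^{-1})|^2$ with horizontal space $\{wv^*:w\in v^\perp\}$), decomposes $T_{(A,\lambda,v)}\V=\ker D\pi_2\oplus H_{\pi_2}$, checks that $D\pi$ carries this splitting to $\ker D\mathcal S\oplus H_{\mathcal S}$ so that $NJ_\pi$ factors, and isolates the constant as $|\det(D\pi|_{\ker D\pi_2})|^2=\|R_v\|_F^2/(\|R_v\|_F^2+1)=1/2$ with $R_v=vv^*/\|v\|^2$; the identity then follows from $NJ_{\mathcal S}=\tfrac12 NJ_{\pi_2}/NJ_\pi$. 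Your approach instead reduces first to $v=e_1$ by the unitary equivariance of $\pi,\pi_2$ (which by itself already yields the ``in particular'' clause) and then computes both normal Jacobians explicitly as ratios of Hermitian Gram determinants. What the paper's argument buys is reuse of Lemma \ref{lem:HorizontalS} and a clean conceptual origin for the factor $1/2$ (it is exactly the defect of $D\pi$ on $\ker D\pi_2$, where the eigenvalue direction is counted both in $\dot A$ and in $\dot\lambda$); what yours buys is a self-contained verification that does not presuppose Lemma \ref{lem:HorizontalS}, and it makes explicit how the $w$-dependent factor $\det(A_{\lambda,e_1}^*A_{\lambda,e_1}+I_{n-1}+\tfrac12 ww^*)$ appears in both Jacobians and cancels, at the cost of heavier bookkeeping. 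Both proofs are valid; your closing observation correctly identifies the paper's step (iv) as the place where the factor $2$ lives if one routes the computation through $\mathcal S$.
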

\begin{proof}
Fix $(A,\lambda,v)\in\W$. Abusing the notation, we will drop the base point $(A,\lambda,v)$ during the proof.  The derivatives of the canonical projections are given by
 $$
 D\pi(\dot A,\dot\lambda,\dot v)=\dot A,\quad\mbox{and}\quad D\pi_2(\dot A,\dot\lambda,\dot v)=\dot v,
 $$
 where $(\dot A,\dot\lambda,\dot v) \in\mnc\times\C\times\C^n$ satisfies (\ref{tanspaceV}). 
 Let $H_{\pi_2}\subset T_{(A,\lambda,v)}\V$ and $H_{\mathcal{S}}\subset\mnc$ be the horizontal spaces associated to $\pi_2$ and $\mathcal{S}=\mathcal{S}_{(A,\lambda,v)}$. 
We divide the rest of the proof into some easy claims:

\begin{enumerate}
 \item[(i)] $D\pi({\ker D\pi_2})= \ker(D\mathcal{S})$: since $D\pi$ is an isomorphism, this follows from (\ref{eq:DS}).
\item[(ii)]   $D\pi({H_{\pi_2}})= H_{\mathcal{S}}$: from (1), by a dimension argument, it is enough to prove $ H_{\mathcal{S}}\subset D\pi({H_{\pi_2}})$.  From Lemma~\ref{lem:HorizontalS}, and (\ref{tanspaceV}), we obtain that 
$$
(D\pi)^{-1}(H_{\mathcal{S}})=\{(wv^*,0,\|v\|^2(A_{\lambda,v})^{-1}w),\quad w\in v^{\perp}\}.$$
Furthermore, from (\ref{tanspaceV}), the kernel of $D\pi_2$ is given by the set of triples $(\dot A,\dot \lambda,0)\in\mnc\times\C$ such that $\dot Av=\dot\lambda v$. Thus $ (D\pi)^{-1}(H_{\mathcal{S}})\subset H_{\pi_2}$.
\item[(iii)] $NJ_{\pi}=|\det(D\pi|_{\ker D\pi_2})|^2\cdot|\det(D\pi|_{H_{\pi_2}})|^2$: this follows immediately from (i) and (ii).
\item[(iv)] $|\det(D\pi|_{\ker(D\pi_2)})|^2=1/2$: let $R_v=\frac{vv^*}{\|v\|^2}\in\mnc$. Then $(R_v,1,0)\in \ker (D\pi_2)$. Furthermore, the Hermitian complement of $(R_v,1,0)$ relative to $\ker(D\pi_2)$ is given by the set $\mathbb{A}_v\times\{0\}\times\{0\}\subset T_{(A,\lambda,v)}\V$, where
$$
\mathbb{A}_v=\{\dot B\in\mnc: \,\dot Bv=0\}.$$
Since $D\pi|_{\mathbb{A}_v\times\{0\}\times\{0\}}:\mathbb{A}_v\times\{0\}\times\{0\}\to\mathbb{A}_v$, is an isometry, and $D\pi(R_v,1,0) \in{\mathbb{A}_v}^{\perp}$, we conclude that 
$$
|\det(D\pi|_{\ker(D\pi_2)})|^2=\frac{\left\|D\pi(R_v,1,0) \right\|^2}{\|R_v\|_F^2+1}=\frac{\|R_v\|_F^2}{\|R_v\|_F^2+1}=\frac12.
$$

 \end{enumerate}

\vspace{3pt}

From Claim I and (\ref{eq:DS}) we get
$$
D\mathcal{S}|_{H_\mathcal{S}}=D\pi_2|_{H_{\pi_2}}\circ (D\pi|_{H_{\pi_2}})^{-1},
$$
and then from (iv) we conclude
$$
NJ_{\mathcal{S}}=\frac{|\det(D\pi_2|_{H_{\pi_2}} )|^2}{|\det(D\pi|_{H_{\pi_2}} )|^2}=\frac{1}{2}\frac{NJ_{\pi_2}}{NJ_{\pi}}.
$$
From this last equality and Lemma \ref{lem:HorizontalS} we have
\[
\frac{NJ_{\pi}}{NJ_{\pi_2}}=\frac{1}{2NJ_{\mathcal{S}}}=\frac{1}{2}|\det A_{\lambda,v}|^2,
\]
as wanted.
\end{proof}
\section{A explicit description of the algorithm}\label{appendix2}
We propose the following algorithm on input $(A_0,\lambda_*,v_*)$ and $A$. The precondition
 \[
d_{\P^2}((A,\lambda_*,v_*),(A,\lambda_0,v_0))\leq \frac{c_0}{4\mu(A,\lambda_0,v_0)}, \quad c_0=0.0739.
\]
which from Proposition  \ref{prop:Armentano2014} guarantees that $(\lambda_*,v_*)$ is an approximate eigenvalue, eigenvector pair of $A$ with associated exact pair some $(\lambda_0,v_0)$, is assumed. Also, recall that $B_t$ is defined by \eqref{eq:At}.
\begin{enumerate}
\item Let $t=0$, $c_0=0.0739$. While $t<a$ do:
\begin{itemize}
\item Let $(A_{now},\lambda_{now},v_{now})=(B_t,\lambda_*,v_*)$.
\item Compute $b>0$ satisfying
\[
\frac{C_\epsilon}{6\sqrt{2}(1+\epsilon)\mu(A_{now},\lambda_{now},v_{now})^2}\leq b\leq \frac{C_\epsilon}{2\sqrt{2}(1+\epsilon)\mu(A_{now},\lambda_{now},v_{now})^2},
\]
where $\epsilon=1/16$.
\item Let $t=t+b$, $A_{next}=B_t$ or $A_a$ if $t+b>a$.
\item Let $(\lambda_*,v_*)=N_{A_{next}}(\lambda_{now},v_{now})$.
\end{itemize}
\item Output $(\lambda_*,v_*)$.
\end{enumerate}

The main result in this section is:
\begin{theorem}\label{th:algorithm}
The algorithm above outputs an approximate eigenvalue, eigenvector pair of $A_a$ with associated exact pair the one conditnued by the homotopy in \eqref{eq:At}. Moreover, the total number of homotopy steps (i.e. the number of times the while loop is executed) is at most
\[
1000\int_0^a \mu(B_t,\lambda_t,v_t)^2\,dt.
\]
Moreover, the total number of arithmetic operations is $O(n^3)$ times this quantity.
\end{theorem}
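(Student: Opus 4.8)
The proof follows the adaptive step-size analysis of continuation methods developed in \cite[Ch.~18]{BurgisserCucker2013} for polynomial systems, with the polynomial-system ingredients replaced by their eigenvalue-problem analogues: the $\alpha$-theorem of Proposition~\ref{prop:Armentano2014}, the Lipschitz-type control of the condition number along the sphere and under perturbation of $(\lambda,v)$ (Lemma~\ref{lem:loclip} and the sharper Proposition~\ref{prop:zetamoves}), and the bound
\[
\Bigl\| \tfrac{d}{dt}(\lambda_t,v_t) \Bigr\|_{d_{\P^2}} \;\le\; \mu(B_t,\lambda_t,v_t)
\]
for the speed of the continued solution path, which follows by differentiating $(\lambda_t I_n-B_t)v_t=0$, solving the resulting equation for $(\dot\lambda_t,\dot v_t)$ through the operator $A_{\lambda_t,v_t}$, and using that $B_t$ lies on the unit sphere and is arc-length parametrized, so $\|B_t\|_F=1$ and $\|\dot B_t\|=1$.

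The core of the argument is a loop invariant together with a one-step lemma. Write $t_0=0<t_1<\dots<t_N=a$ for the successive values of $t$ produced by the algorithm, so that $A_{now}$ at the $j$-th pass is $B_{t_j}$ and $(\lambda_{t_j},v_{t_j})$ is the continued exact eigenpair. The invariant is
\[
d_{\P^2}\bigl((B_{t_j},\lambda_*,v_*),(B_{t_j},\lambda_{t_j},v_{t_j})\bigr)\;\le\;\tfrac{c_0}{4}\,\mu(B_{t_j},\lambda_{t_j},v_{t_j})^{-1},
\]
which is exactly the precondition at $j=0$. Assuming it at step $j$, one proves it at step $j+1$ in three pieces, using that the step size $b=b_j$ satisfies $\tfrac{C_\epsilon}{6\sqrt2(1+\epsilon)}\mu(A_{now},\lambda_{now},v_{now})^{-2}\le b\le \tfrac{C_\epsilon}{2\sqrt2(1+\epsilon)}\mu(A_{now},\lambda_{now},v_{now})^{-2}$: \emph{(i)} since $\mu(A_{now},\lambda_{now},v_{now})$ is within a universal factor of $\mu(B_t,\lambda_t,v_t)$ for all $t\in[t_j,t_{j+1}]$ (by the Lipschitz behaviour of $\mu$, Lemma~\ref{lem:loclip} and Proposition~\ref{prop:zetamoves}, together with the invariant), integrating the speed bound shows that the exact eigenpair drifts by at most a small multiple of $\mu(B_{t_{j+1}},\lambda_{t_{j+1}},v_{t_{j+1}})^{-1}$ from $t_j$ to $t_{j+1}$; \emph{(ii)} adding the invariant at $t_j$ via the triangle inequality, the computed $(\lambda_{now},v_{now})$ is at $d_{\P^2}$-distance at most $c_0\,\mu(B_{t_{j+1}},\lambda_{t_{j+1}},v_{t_{j+1}})^{-1}$ from $(\lambda_{t_{j+1}},v_{t_{j+1}})$, so Proposition~\ref{prop:Armentano2014} applies at $A_{next}=B_{t_{j+1}}$; \emph{(iii)} hence the Newton update $N_{A_{next}}(\lambda_{now},v_{now})$ contracts that distance (immediate quadratic convergence, Definition~\ref{def:eigenpair}), and the numerical margins built into \emph{(i)}--\emph{(ii)} make the result at most $\tfrac{c_0}{4}\mu(B_{t_{j+1}},\lambda_{t_{j+1}},v_{t_{j+1}})^{-1}$, restoring the invariant.

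From the invariant at $j=N$ (where $A_{next}=B_a$) and Proposition~\ref{prop:Armentano2014}, the output $(\lambda_*,v_*)$ is an approximate eigenpair of $B_a$ --- equivalently of $A_a$, by the scale invariance of $\mu$ and of Newton's iteration (Remark~\ref{rem:muscaling}) --- converging to the continued exact pair, which is the first assertion. Termination is automatic: along a well-posed path the continuous function $t\mapsto\mu(B_t,\lambda_t,v_t)$ is bounded on $[0,a]$, so by the comparability from \emph{(i)} the step sizes $b_j$ are bounded below by a positive constant and $N<\infty$. For the step count, the lower bound on $b_j$ gives $1\le \tfrac{6\sqrt2(1+\epsilon)}{C_\epsilon}\,b_j\,\mu(A_{now},\lambda_{now},v_{now})^2$; summing over $j$ and using that $\mu(A_{now},\lambda_{now},v_{now})^2\le \tfrac94(\text{const})^2\,\mu(B_t,\lambda_t,v_t)^2$ for every $t$ in the subinterval $[t_j,t_{j+1}]$ (again \emph{(i)} together with $C=3/2$ in Lemma~\ref{lem:loclip}) turns $\sum_j b_j\,\mu(A_{now},\ldots)^2$ into at most a constant times $\int_0^a\mu(B_t,\lambda_t,v_t)^2\,dt$; one then checks that the accumulated numerical constant is at most $1000$ (up to the harmless additive $1$ coming from the possibly short final step, or assuming $a$ bounded below). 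Finally, each pass of the while loop costs $O(n^3)$ arithmetic operations --- computing $\mu(A_{now},\lambda_{now},v_{now})$ to within a constant factor reduces to evaluating the norm of a Moore--Penrose pseudoinverse of an $(n-1)\times n$ matrix, which is $O(n^3)$; choosing $b$ is then immediate; forming $B_{t+b}$ is $O(n^2)$; and one Newton step $N_{A_{next}}$ solves an $n\times n$ linear system, $O(n^3)$ --- so the total is $O(n^3)$ times the number of steps.

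The step I expect to be the main obstacle is the constant bookkeeping in the one-step lemma, piece \emph{(iii)}: a single Newton step contracts the distance to the true eigenpair only by a bounded factor, while between two consecutive nodes one loses both to the drift of the true eigenpair (piece \emph{(i)}) and to the possible growth of $\mu$; making the net effect a genuine contraction is precisely what forces the quarter-ball invariant and the asymmetric step-size window (lower constant $6\sqrt2$ versus upper constant $2\sqrt2$, and $\epsilon=1/16$), and verifying that these particular choices close the chain of inequalities --- including the implicit definition of $C_\epsilon$ --- is the delicate part.
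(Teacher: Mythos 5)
Your proposal is correct and follows essentially the same route as the paper: the same quarter-ball loop invariant $d_{\P^2}\leq c_0/(4\mu)$ maintained by induction, the same one-step decomposition (drift of the continued pair via the speed bound of Proposition~\ref{prop:zetamoves}, triangle inequality, Newton contraction via Proposition~\ref{prop:Armentano2014}), and the same per-step lower bound on $\int\mu^2$ for the step count, with $O(n^3)$ work per step. One small correction: the comparison of $\mu(A_{now},\lambda_{now},v_{now})$ with $\mu(A_{now},\lambda_t,v_t)$ (same matrix, perturbed eigenpair) is supplied in the paper by Proposition~\ref{prop:changemu}, not by Lemma~\ref{lem:loclip} or Proposition~\ref{prop:zetamoves}, which control perturbations of the matrix $A$ itself.
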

We start the proof of Theorem \ref{th:algorithm} by recalling the following result:
\begin{proposition}\label{prop:variationmu0}\cite[Prop. 3.22]{Armentano2014}
Let $ \epsilon>0$ and let $(A,\lambda,v),(A',\lambda',v')\in\CW$ be such that $\mu(A,\lambda,v)<\infty$. Assume moreover that
\[
d_{\P^2}((A,\lambda,v),(A',\lambda',v'))\leq  \frac{C_\epsilon}{\mu(A,\lambda,v)},
\]
where 
\[
C_\epsilon=\frac{\arctan\frac{\epsilon}{\sqrt{2}+\alpha(1+\epsilon)}}{1+\epsilon},\quad \alpha=2\sqrt{2}(1+\sqrt{5}).
\]
Then,
\[
\frac{\mu(A,\lambda,v)}{1+\epsilon}\leq\mu(A',\lambda',v')\leq(1+\epsilon)\mu(A,\lambda,v)
\]

\end{proposition}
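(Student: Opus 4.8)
The statement is \cite[Prop. 3.22]{Armentano2014}; here is the route I would take. Since $(A,\lambda,v),(A',\lambda',v')\in\CW$ both lie in the solution variety, $(\lambda I_n-A)v=0$, so the operators $M:=(I-vv^*)(\lambda I_n-A)$ and $M':=(I-v'v'^*)(\lambda' I_n-A')$ both have rank $n-1$; moreover $Mv=0$ and $M$ acts on $v^\perp$ exactly as $A_{\lambda,v}$, so $\|M^\dagger\|=1/\sigma_{n-1}(M)=\|A_{\lambda,v}^{-1}\|$ and $\mu(A,\lambda,v)=\max(1,\|A\|_F\|M^\dagger\|)$, and likewise at the primed point. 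Using that $\mu$ is scale invariant in $(A,\lambda)$ and unitarily invariant (Remark \ref{rem:muscaling}), I would normalize the representatives so that $|\lambda|^2+\|A\|_F^2=|\lambda'|^2+\|A'\|_F^2=1$, with phases chosen optimally and with $v'$ a representative of its line realizing the minimal Hermitian angle $\theta$ to $v$, so that $\|vv^*-v'v'^*\|=\sin\theta$. The case $\mu(A,\lambda,v)=1$ is then settled at the end by monotonicity, so assume $\mu(A,\lambda,v)=\|A\|_F\|M^\dagger\|>1$. A useful preliminary observation is that this forces $\|A\|_F>1/\sqrt5$: indeed $\sigma_{n-1}(M)=\sigma_{\min}(A_{\lambda,v})\ge|\lambda|-\|A\|\ge\sqrt{1-\|A\|_F^2}-\|A\|_F$, whence $\mu\le\|A\|_F/(\sqrt{1-\|A\|_F^2}-\|A\|_F)$ and $\mu>1$ gives $\|A\|_F^2>1/5$ --- this is where the $\sqrt5$ inside $\alpha$ originates.

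The core is a perturbation estimate for $\|M'^\dagger\|$. Expanding,
\[
M'-M=(vv^*-v'v'^*)(\lambda' I_n-A')+(I-vv^*)\big((\lambda'-\lambda)I_n-(A'-A)\big),
\]
and using $\|(I-vv^*)X\|\le\|X\|$, the normalization bound $\|\lambda' I_n-A'\|\le|\lambda'|+\|A'\|_F\le\sqrt2$, and $\|(\lambda'-\lambda)I_n-(A'-A)\|\le\sqrt2\,\|(A',\lambda')-(A,\lambda)\|$, one gets
\[
\|M'-M\|\le\sqrt2\big(\sin\theta+\|(A',\lambda')-(A,\lambda)\|\big)\le 2\sqrt2\,d_{\P^2}\big((A,\lambda,v),(A',\lambda',v')\big)\le\frac{2\sqrt2\,C_\epsilon}{\mu(A,\lambda,v)}=\frac{2\sqrt2\,C_\epsilon}{\|A\|_F\|M^\dagger\|},
\]
where I used that each factor of the product distance $d_{\P^2}$ is at most $d_{\P^2}$ and that chord lengths are dominated by the corresponding arc lengths. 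Hence $\|M^\dagger\|\,\|M'-M\|\le 2\sqrt2\,C_\epsilon/\|A\|_F<2\sqrt{10}\,C_\epsilon$, which is $<1$ for $C_\epsilon$ small. By Weyl's inequality for singular values $|\sigma_{n-1}(M')-\sigma_{n-1}(M)|\le\|M'-M\|$, so $\|M'^\dagger\|\le\|M^\dagger\|/(1-\|M^\dagger\|\|M'-M\|)$ and symmetrically $\|M'^\dagger\|\ge\|M^\dagger\|/(1+\|M^\dagger\|\|M'-M\|)$; combined with $\big|\,\|A'\|_F-\|A\|_F\,\big|\le\|(A',\lambda')-(A,\lambda)\|\le C_\epsilon/\mu(A,\lambda,v)$ controlling the ratio of Frobenius norms, multiplying the two ratios shows $\mu(A',\lambda',v')/\mu(A,\lambda,v)\in[1/(1+\epsilon),\,1+\epsilon]$ once $C_\epsilon$ is small enough, and the $\mu(A,\lambda,v)=1$ case falls out of the same bounds together with $\mu(A',\lambda',v')\ge1$.

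The remaining --- and only genuinely delicate --- point is to run this chain with sharp enough constants so that the admissible threshold for $C_\epsilon$ comes out to be exactly $\arctan\!\big(\epsilon/(\sqrt2+\alpha(1+\epsilon))\big)/(1+\epsilon)$ with $\alpha=2\sqrt2(1+\sqrt5)$. The $\sqrt2$'s are the operator-norm estimates above; the factor $\alpha$ (and the $1+\sqrt5$ inside it) comes from the lower bound $\|A\|_F>1/\sqrt5$ together with a mildly self-referential loop --- both the bound on $\|\lambda'I_n-A'\|$ and the conversion of the chordal estimate back into a bound on $d_{\P^2}$ involve quantities at the perturbed point and its distance to the ill-posed variety $\Sigma'$, which one disentangles by a short fixed-point/quadratic argument --- and the $\arctan$ appears because $d_{\P^2}$ is an intrinsic arc-length distance while the quantity actually controlled at each step is a tangent (equivalently, $\mathrm{arccot}$ of $\mu(A,\lambda,v)$ is, up to the normalization, the $d_{\P^2}$-distance from $(A,\lambda,v)$ to $\Sigma'$, and one is really exploiting the $1$-Lipschitzness of distance-to-a-set). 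For this exact bookkeeping I would follow \cite[\S3]{Armentano2014}; the mathematical substance is the estimate for $\|M'^\dagger\|$ displayed above.
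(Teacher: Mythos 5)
The paper offers no proof of this proposition --- it is imported verbatim from \cite[Prop.~3.22]{Armentano2014} --- so there is no in-paper argument to compare yours against; what follows is an assessment of your sketch on its own terms. The core of your scheme is sound: the identity $M'-M=(vv^*-v'v'^*)(\lambda' I_n-A')+(I-vv^*)((\lambda'-\lambda)I_n-(A'-A))$, the normalization $|\lambda|^2+\|A\|_F^2=1$, the resulting bound $\|M'-M\|\le 2\sqrt2\,d_{\P^2}$, Weyl's inequality for $\sigma_{n-1}$, and the observation that $\mu>1$ forces $\|A\|_F>1/\sqrt5$ (which is correct here because for $(A,\lambda,v)\in\V$ one has $A_{\lambda,v}=\lambda I_{n-1}-\Pi_{v^\perp}A|_{v^\perp}$, so $\sigma_{\min}(A_{\lambda,v})\ge|\lambda|-\|A\|$ with no loss from the projection). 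Carried through carefully --- including the case $\mu(A,\lambda,v)=1$ with $\|A\|_F\le 1/\sqrt5$, where one needs the uniform bound $\|M^\dagger\|\le\sqrt5$ coming from the same inequality rather than $\|M^\dagger\|\le 1/\|A\|_F$ --- this yields $\|M^\dagger\|\,\|M'-M\|\le 2\sqrt{10}\,C_\epsilon$ and then the claimed two-sided bound.

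The gap is that you never carry it through: the proposal ends with ``once $C_\epsilon$ is small enough'' and defers ``the exact bookkeeping'' to \cite{Armentano2014}. For this proposition the constant \emph{is} the content. The specific value $C_\epsilon=\arctan(\epsilon/(\sqrt2+\alpha(1+\epsilon)))/(1+\epsilon)$ with $\alpha=2\sqrt2(1+\sqrt5)$ is used numerically downstream (Proposition~\ref{prop:onestep} needs $C_{1/6}<c_0/6$; Theorem~\ref{th:algorithm} builds its step size from $C_{1/16}$), so a proof that only delivers ``some universal constant'' does not establish the statement, and deferring the delicate part to the reference makes the attempt no more than the citation the paper already gives. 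The final accounting is in fact within reach of your chain --- e.g.\ the upper bound reduces to $C_\epsilon\,(\sqrt5+2\sqrt{10}(1+\epsilon))\le\epsilon$, which the stated $C_\epsilon$ satisfies since $(1+\epsilon)(\sqrt2+\alpha(1+\epsilon))\ge\sqrt5+2\sqrt{10}(1+\epsilon)$ --- but that verification, together with the $\mu=1$ case analysis, is precisely the part you omitted. Your closing speculation that the $\arctan$ and the $1+\sqrt5$ arise from a distance-to-$\Sigma'$ (condition number theorem) argument is plausible and suggests the original proof is structured differently from your direct singular-value perturbation; that makes it all the more important to check that your route is compatible with the \emph{stated} constant rather than merely some constant of the same order.
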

\begin{proposition}\label{prop:zetamoves}
Let $\epsilon>0$ and $A_0\not\in\Sigma$. Then, for every $A\in\C^{n\times n}$ such that the spherical distance $a=d_\S(A,A_0)$ from $A_0$ to $A$ satisfies 
\begin{equation}\label{eq:a}
a\leq \frac{C_\epsilon}{2\sqrt{2}(1+\epsilon)\mu(A_0,\lambda_0,v_0)^2},
\end{equation}
 we have
\[
d_{\P^2}((A,\lambda,v),(A_0,\lambda_0,v_0))\leq  \frac{C_\epsilon}{\mu(A_0,\lambda_0,v_0)}.
\]
where $\lambda,v$ is the eigenvalue, eigenvector pair of $A$ obtained by the homotopy \eqref{eq:At} starting at $(A_0,\lambda_0,v_0)$ (in particular, such homotopy is well--defined). Moreover, for all $t\in[0,a]$ we also have:
\[
\frac{1}{1+\epsilon}\mu(A_0,\lambda_0,v_0)\leq \mu(A_t,\lambda_t,v_t)\leq (1+\epsilon)\mu(A_0,\lambda_0,v_0)
\]
\end{proposition}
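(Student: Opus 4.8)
The plan is a continuity (open--closed) argument along the lifted homotopy, using Proposition~\ref{prop:variationmu0} to keep the condition number under control. Write $\mu_0:=\mu(A_0,\lambda_0,v_0)$, finite because $A_0\notin\Sigma$ forces $(A_0,\lambda_0,v_0)\in\W$. The curve $B_t$ of \eqref{eq:At} is the unit--speed parametrization of the great circle in $\S(\mnc)$ from $A_0/\|A_0\|_F$ to $A/\|A\|_F$, so $\|B_t\|_F\equiv1$ and $\|\dot B_t\|_F\equiv1$; since $\mu$ and $d_{\P^2}$ only see projective classes (Remark~\ref{rem:muscaling} and the definition of $d_{\P^2}$), we may take the eigenpair data of $B_0$ to be $(\lambda_0,v_0)$ up to rescaling, with $\mu(B_0,\lambda_0,v_0)=\mu_0$. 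Because the starting triple lies in $\W$, the implicit function theorem applied to $\pi$ gives a unique $C^1$ lift $t\mapsto(B_t,\lambda_t,v_t)\in\V$ (with $v_0$ the given eigenvector) on a maximal interval along which the lift stays in $\W$; differentiating $(\lambda_tI_n-B_t)v_t=0$ and $\langle\dot v_t,v_t\rangle=0$ yields $\dot v_t=(A_{\lambda_t,v_t})^{-1}\Pi_{v_t^\perp}\dot B_t\,v_t$ and a companion expression for $\dot\lambda_t$.

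The one substantial ingredient I would need is a bound on the speed of the lift in the Riemannian metric of $\P(\mnc\times\C)\times\pc$, namely
\[
\left\|\tfrac{d}{dt}(B_t,\lambda_t,v_t)\right\|\le 2\sqrt2\,\mu(B_t,\lambda_t,v_t).
\]
This follows by inserting $\|(A_{\lambda_t,v_t})^{-1}\|\le\mu(B_t,\lambda_t,v_t)/\|B_t\|_F=\mu(B_t,\lambda_t,v_t)$, $\|\dot B_t\|\le\|\dot B_t\|_F=1$ and $\|v_t\|=1$ into the formulas for $\dot v_t,\dot\lambda_t$ and tracking the projective normalization of $(B_t,\lambda_t)$; it is precisely the estimate of \cite{Armentano2014} underlying Proposition~\ref{prop:stepsold}. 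Since a Riemannian distance is $1$--Lipschitz along curves with respect to arc length, integration gives
\[
d_{\P^2}\big((B_t,\lambda_t,v_t),(A_0,\lambda_0,v_0)\big)\le 2\sqrt2\int_0^t\mu(B_s,\lambda_s,v_s)\,ds .
\]

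Now I would bootstrap. Let $T^*\in(0,a]$ be the supremum of the $t\in[0,a]$ such that the lift is defined and in $\W$ on $[0,t]$ and $\mu(B_s,\lambda_s,v_s)\le(1+\epsilon)\mu_0$ for all $s\in[0,t]$; $T^*>0$ by continuity of $\mu$ along the lift. For $t\in[0,T^*)$, combining the running bound with the previous display and \eqref{eq:a},
\[
d_{\P^2}\big((B_t,\lambda_t,v_t),(A_0,\lambda_0,v_0)\big)\le 2\sqrt2(1+\epsilon)\mu_0\,t<2\sqrt2(1+\epsilon)\mu_0\,a\le\frac{C_\epsilon}{\mu_0},
\]
so Proposition~\ref{prop:variationmu0}, applied with base point $(A_0,\lambda_0,v_0)$ and moving point $(B_t,\lambda_t,v_t)$, yields $\mu_0/(1+\epsilon)\le\mu(B_t,\lambda_t,v_t)\le(1+\epsilon)\mu_0$ and in particular keeps the lift inside $\W$. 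The usual limiting argument shows $T^*<a$ is impossible: as $t\to T^*$ the distance stays strictly below $C_\epsilon/\mu_0$ and $\mu$ stays bounded, hence $A_{\lambda_t,v_t}$ stays uniformly invertible, so the lift converges to a point of $\W$ and the implicit function theorem extends it a little past $T^*$ with all bounds still valid --- contradicting the definition of $T^*$. Therefore $T^*=a$, the homotopy is well defined on $[0,a]$, and letting $t\to a$ with $(\lambda,v):=(\lambda_a,v_a)$ gives $d_{\P^2}((A,\lambda,v),(A_0,\lambda_0,v_0))\le C_\epsilon/\mu_0$ together with $\mu_0/(1+\epsilon)\le\mu(B_t,\lambda_t,v_t)\le(1+\epsilon)\mu_0$ for every $t\in[0,a]$; by Remark~\ref{rem:muscaling} the latter is unchanged if $B_t$ is replaced by any positive rescaling, in particular by $A_t$. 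The only real obstacle is the speed estimate $\|\tfrac{d}{dt}(B_t,\lambda_t,v_t)\|\le 2\sqrt2\,\mu$: obtaining exactly this constant requires the careful norm bookkeeping --- including the precise relation between $d_{\P^2}$ and the ambient Frobenius and Fubini--Study metrics --- carried out in \cite{Armentano2014}; given that, the continuity scaffolding above is routine.
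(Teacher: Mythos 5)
Your proposal is correct and follows essentially the same route as the paper: the derivative-of-distance bound $\tfrac{d}{dt}d_{\P^2}\le 2\sqrt2\,\mu$ from \cite[Prop.~3.14]{Armentano2014} (which you correctly flag as the one imported ingredient), integration along the lift, and a continuation/bootstrap argument combined with Proposition~\ref{prop:variationmu0} to keep $\mu$ within a factor $(1+\epsilon)$ of $\mu(A_0,\lambda_0,v_0)$. The only cosmetic difference is that you define the maximal time $T^*$ via the condition-number bound while the paper defines its $t_1$ via the distance bound; the contradiction argument is the same.
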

\begin{proof}
Let $B_t$ be given by \eqref{eq:At}, so $B_0=A_0$, $B_a=A$. Let $(\lambda_t,v_t)$ be the eigenvalue, eigenvector pair continued from $(B_0,\lambda_0,v_0)$, which is by the inverse function theorem defined for all $t<t_0$ for some $t_0>0$. Let
\[
t_1=\sup\{t\in[0,t_0]:d_{\P^2}((A_s,\lambda_s,v_s),(A_0,\lambda_0,v_0))<C_\epsilon/\mu(A_0,\lambda_0,v_0),\;\forall\;s\in[0,t]\}>0.
\]
Now, let $t\leq t_1$, and note that from \cite[Prop. 3.14, ii)]{Armentano2014} we have
\[
d_{\P^2}((A_t,\lambda_t,v_t),(A_0,\lambda_0,v_0))=\int_0^t\frac{d}{ds}d_{\P^2}((A_s,\lambda_s,v_s),(A_0,\lambda_0,v_0))\,ds< 2\sqrt{2}\int_0^t\mu(A_s,\lambda_s,v_s)\,ds.
\]
From Proposition \ref{prop:variationmu0} we conclude:
\[
d_{\P^2}((A_t,\lambda_t,v_t),(A_0,\lambda_0,v_0))<2\sqrt{2}(1+\epsilon)\mu(A_0,\lambda_0,v_0)\,t.
\]
Now note that for $t\leq t_1$ from Proposition \ref{prop:variationmu0} the condition number is bounded above by $(1+\epsilon) \mu(A_0,\lambda_0,v_0)$ and thus the solution can be continued. Hence, $t_0\geq t_1$. Now, assume that
\begin{equation}\label{eq:iamfalse}
t_1<\frac{C_\epsilon}{2\sqrt{2}(1+\epsilon)\mu(A_0,\lambda_0,v_0)^2}.
\end{equation}
In that case, the argument above shows that
\[
d_{\P^2}((A_{t_1},\lambda_{t_1},v_{t_1}),(A_0,\lambda_0,v_0))< 2\sqrt{2}(1+\epsilon)\mu(A_0,\lambda_0,v_0)\,t_1\leq C_\epsilon/\mu(A_0,\lambda_0,v_0),
\]
and by continuity for some small enough $s>0$ we have $d_{\P^2}((A_s,\lambda_s,v_s),(A_0,\lambda_0,v_0))>C_\epsilon/\mu(A_0,\lambda_0,v_0)$. This contradicts the definition of $t_1$, so we conclude that \eqref{eq:iamfalse} is false. We have thus proved
\[
\frac{C_\epsilon}{2\sqrt{2}(1+\epsilon)\mu(A_0,\lambda_0,v_0)^2}\leq t_1\leq t_0,
\]
that is for every $t$ smaller than the left--hand term, the pair $(\lambda_t,v_t)$ is well--defined by continuation and $ (1+\epsilon)^{-1}\mu(A_0,\lambda_0,v_0)\leq \mu(A_t,\lambda_t,v_t)\leq (1+\epsilon)\mu(A_0,\lambda_0,v_0)$ as claimed.
\end{proof}
\begin{proposition}\label{prop:onestep}
Let $c_0=0.0739$ and let $(A,\lambda_*,v_*)\in\CM\times\C\times\pc$ satisfy
\[
d_{\P^2}((A,\lambda_*,v_*),(A,\lambda_0,v_0))\leq \frac{c_0}{4\mu(A,\lambda_0,v_0)},
\]
where $(A,\lambda_0,v_0)\in\CW$. Then, for every $A\in\C^{n\times n}$ such that the spherical distance $a=d_\S(A,A_0)$ from $A_0$ to $A$ satisfies  \eqref{eq:a}  with $\epsilon=1/6$, namely,
\begin{equation}\label{eq:a2}
a\leq \frac{0.003579}{\mu(A_0,\lambda_0,v_0)^2},
\end{equation}
we have
\[
d_{\P^2}((A,\lambda,v),(A,N_A(\lambda_*,v_*)))\leq  \frac{c_0}{4\mu(A,\lambda,v)}.
\]
where $\lambda,v$ is the eigenvalue, eigenvector pair of $A$ obtained by the homotopy \eqref{eq:At} starting at $(A_0,\lambda_0,v_0)$.
\end{proposition}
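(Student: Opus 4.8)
The plan is to reduce the statement to a single application of Smale-type $\alpha$-theory (Proposition~\ref{prop:Armentano2014}) for the target matrix $A$: once $(\lambda_*,v_*)$ is shown to sit well inside the basin of the eigenpair $(\lambda,v)$ of $A$ obtained by continuation, one Newton step of $N_A$ halves the distance and we are done. \emph{Step 1 (move the exact solution).} Apply Proposition~\ref{prop:zetamoves} with $\epsilon=1/6$; the bound \eqref{eq:a2} assumed here is exactly \eqref{eq:a} for this $\epsilon$, as one checks that $C_{1/6}/(2\sqrt2\,(1+1/6))=0.003579$. The proposition then gives that the continuation $(\lambda_t,v_t)$ of $(\lambda_0,v_0)$ along $B_t$ is well defined on $[0,a]$ (so $(\lambda,v)=(\lambda_a,v_a)$ makes sense), that
\[
d_{\P^2}\big((A,\lambda,v),(A_0,\lambda_0,v_0)\big)\le\frac{C_\epsilon}{\mu(A_0,\lambda_0,v_0)},
\]
and that $\mu$ changes by at most a factor $1+\epsilon$ along $[0,a]$; in particular $\mu(A_0,\lambda_0,v_0)\le(1+\epsilon)\,\mu(A,\lambda,v)$.

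\emph{Step 2 (locate $(\lambda_*,v_*)$ relative to $A$).} By the triangle inequality, inserting $(A_0,\lambda_*,v_*)$ and $(A_0,\lambda_0,v_0)$,
\[
d_{\P^2}\big((A,\lambda_*,v_*),(A,\lambda,v)\big)\le d_{\P^2}\big((A,\lambda_*,v_*),(A_0,\lambda_*,v_*)\big)+d_{\P^2}\big((A_0,\lambda_*,v_*),(A_0,\lambda_0,v_0)\big)+d_{\P^2}\big((A_0,\lambda_0,v_0),(A,\lambda,v)\big).
\]
The middle term is $\le c_0/(4\mu(A_0,\lambda_0,v_0))$ by hypothesis, the last is $\le C_\epsilon/\mu(A_0,\lambda_0,v_0)$ by Step~1, and in the first only the $\P(\CM\times\C)$-coordinate moves, from $[A_0:\lambda_*]$ to $[A:\lambda_*]$; since $\S(\CM)\ni A'\mapsto(A',\lambda_*)/\|(A',\lambda_*)\|$ is $1$-Lipschitz (its differential has norm $(1+|\lambda_*|^2)^{-1/2}$), this term is $\le d_\S(A,A_0)=a\le C_\epsilon/(2\sqrt2\,(1+\epsilon)\,\mu(A_0,\lambda_0,v_0)^2)$. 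Using $\mu(A_0,\lambda_0,v_0)\ge1$ in the last term and then $1/\mu(A_0,\lambda_0,v_0)\le(1+\epsilon)/\mu(A,\lambda,v)$ from Step~1, the three terms sum to at most $\big[(1+\epsilon)C_\epsilon+\tfrac{1}{2\sqrt2}C_\epsilon+\tfrac{1+\epsilon}{4}c_0\big]\mu(A,\lambda,v)^{-1}$, and the point is that for $c_0=0.0739$, $\epsilon=1/6$ this is $\le c_0/(2\mu(A,\lambda,v))$.

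\emph{Step 3 (one Newton step).} Since $\mu(A,\lambda,v)<\infty$ by Step~1 and $c_0/(2\mu(A,\lambda,v))\le c_0/\mu(A,\lambda,v)$, Proposition~\ref{prop:Armentano2014} applied to $A$ with starting point $(\lambda_*,v_*)$ and exact eigenpair $(\lambda,v)$ says that $(\lambda_*,v_*)$ is an approximate eigenpair of $A$ converging to $(\lambda,v)$ under $N_A$; Definition~\ref{def:eigenpair} with $k=1$ then gives
\[
d_{\P^2}\big((A,\lambda,v),(A,N_A(\lambda_*,v_*))\big)\le\tfrac12\,d_{\P^2}\big((A,\lambda_*,v_*),(A,\lambda,v)\big)\le\frac{c_0}{4\mu(A,\lambda,v)},
\]
which is the assertion.

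The one delicate point is the numerical inequality closing Step~2: the three contributions must add to at most $c_0/(2\mu(A,\lambda,v))$ after one absorbs the factor $1+\epsilon$ coming from the crude estimate $\mu(A_t)\le(1+\epsilon)\mu(A_0)$, and the slack is small. Should the constants fail to close with $\epsilon=1/6$ and the bare $\tfrac12$ Newton contraction, there is room elsewhere: shrink $\epsilon$ (this only shortens the admissible homotopy step $a$, which does not affect the polynomial complexity), replace the uniform bound $\mu(A_t)\le(1+\epsilon)\mu(A_0)$ by one degrading continuously with $a$ so that the loss is $o(1)$ when $a\ll C_\epsilon\mu(A_0)^{-1}$, or invoke the sharp one-step Newton estimate whose contraction factor is strictly below $\tfrac12$ well inside the basin. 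The rest---the Lipschitz bound for the projective embedding, the triangle inequality, and the behaviour of $(\lambda,v)$ and $\mu$ along the homotopy (entirely Proposition~\ref{prop:zetamoves})---is routine.
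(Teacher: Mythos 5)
Your overall architecture (continuation estimate from Proposition~\ref{prop:zetamoves}, a triangle inequality to place $(\lambda_*,v_*)$ within $c_0/(2\mu(A,\lambda,v))$ of the continued pair, then one Newton halving via Proposition~\ref{prop:Armentano2014} and Definition~\ref{def:eigenpair}) is exactly the paper's. The gap is the numerical claim that closes Step~2: the bracket you produce, $\bigl[(1+\epsilon)C_\epsilon+\tfrac{1}{2\sqrt2}C_\epsilon+\tfrac{1+\epsilon}{4}c_0\bigr]$, is \emph{not} $\le c_0/2$. With $\epsilon=1/6$ one has $C_\epsilon\approx 0.01181$, so the three contributions are $\approx 0.01378+0.00418+0.02155\approx 0.0395$, while $c_0/2=0.03695$. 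After the Newton halving you would get $\approx 0.0198/\mu$, exceeding the required $c_0/(4\mu)\approx 0.0185/\mu$, so the proof as written does not establish the conclusion. The culprit is the extra term $d_{\P^2}\bigl((A,\lambda_*,v_*),(A_0,\lambda_*,v_*)\bigr)\le a$ that your three-term triangle inequality introduces: the slack available in the two remaining terms is only $c_0/2-\tfrac{7}{6}\bigl(\tfrac{c_0}{4}+C_\epsilon\bigr)\ge c_0/2-\tfrac{35c_0}{72}=c_0/72\approx 0.001$, far smaller than your extra $C_\epsilon/(2\sqrt2)\approx 0.0042$.

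The paper avoids this term altogether: since $d_{\P^2}$ compares the points $[(A,\lambda_*)]$ and $[(A,\lambda_0)]$ in $\P(\CM\times\C)$, the quantity $d_{\P^2}\bigl((A,\lambda_*,v_*),(A,\lambda_0,v_0)\bigr)$ depends on the matrix slot only through $\|A\|_F$ (for unit norm it is $\arccos\frac{|1+\lambda_*\overline{\lambda_0}|}{\sqrt{1+|\lambda_*|^2}\sqrt{1+|\lambda_0|^2}}$ combined with $d_\P(v_*,v_0)$), and along the spherical homotopy \eqref{eq:At} one has $\|A\|_F=\|A_0\|_F=1$; hence the hypothesis bounds $d_{\P^2}\bigl((A,\lambda_*,v_*),(A,\lambda_0,v_0)\bigr)$ verbatim, a two-term triangle inequality suffices, and the constants close as $\tfrac{7}{6}\bigl(\tfrac{c_0}{4}+C_\epsilon\bigr)\le\tfrac{35c_0}{72}<\tfrac{c_0}{2}$. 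Your hedged escape routes do not rescue the statement as written: shrinking $\epsilon$ shrinks the admissible $a$ and therefore proves a proposition with a smaller constant than the $0.003579$ in \eqref{eq:a2}; a contraction factor strictly below $1/2$ is not provided by Proposition~\ref{prop:Armentano2014}/Definition~\ref{def:eigenpair} as quoted; and an $a$-dependent refinement of the $\mu$-variation would require reproving Proposition~\ref{prop:zetamoves}, which is not among the tools you invoke. Replacing your first triangle term by the equal-norms observation above repairs the argument and reduces it to the paper's proof.
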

\begin{proof}
Let $\epsilon=1/6$ which implies $C_\epsilon<c_0/6$. From Proposition \ref{prop:zetamoves}, we have
\[
d_{\P^2}((A,\lambda_*,v_*),(A,\lambda,v))\leq d_{\P^2}((A,\lambda_*,v_*),(A,\lambda_0,v_0))+d_{\P^2}((A,\lambda_0,v_0),(A,\lambda,v))\leq
\]
\[
 \frac{c_0}{4\mu(A,\lambda_0,v_0)}+ \frac{C_\epsilon}{\mu(A,\lambda_0,v_0)}\leq \frac{3c_0}{4\mu(A,\lambda_0,v_0)}\underset{\text{Prop. \ref{prop:zetamoves}}}{\leq} \frac{5c_0}{12\mu(A,\lambda,v)}(1+1/6)\leq  \frac{c_0}{2\mu(A,\lambda,v)}.
\]
From Proposition \ref{prop:Armentano2014} we conclude that $(\lambda_0,v_0)$ is an approximate zero of $A$ with associated exact zero the continued pair $(\lambda,v)$. From the definition of approximate zero, the distance is halved after one iteration of Newton's method which gives the desired result.
\end{proof}
\subsection{Proof of Theorem \ref{th:algorithm}}
We prove that at every step of the algorithm we have
\begin{equation}\label{eq:mirame}
d_{\P^2}((A_{now},\lambda_{now},v_{now}),(A_{now},\lambda_t,v_t))\leq \frac{c_0}{4\mu(A_{now},\lambda_t,v_t)}.
\end{equation}
(note that$(A_{now},\lambda_t,v_t)\in\CV$). The proof goes by induction. The base case for the first loop is true by hypotheses. Now, given that \eqref{eq:mirame} holds at a certain point, let us check that at the next step of the loop it still holds. Note that
\[
d_\S(A_{now},A_{next})=b\leq  \frac{C_\epsilon}{2\sqrt{2}(1+\epsilon)\mu(A_{now},\lambda_{now},v_{now})^2},\quad \epsilon=\frac{1}{16}.
\]
Now we need to compare $\mu(A_{now},\lambda_{now},v_{now})$ and $\mu(A_{now},\lambda_t,v_t)$. Note that we are under the hypotheses of Proposition \ref{prop:changemu}, with $\epsilon=5c_0/4<1/10$, so we have
\[
\frac{\mu(A_{now},\lambda_t,v_t)}{1-1/10}\geq \mu(A_{now},\lambda_{now},v_{now})\geq\frac{\mu(A_{now},\lambda_t,v_t)}{1+1/10},
\]
and hence
\[
b\leq  \frac{C_\epsilon}{2\sqrt{2}(1+\epsilon){\mu(A_{now},\lambda_t,v_t)^2}}(1+1/10)^2\leq  \frac{2C_\epsilon}{2\sqrt{2}(1+\epsilon){\mu(A_{now},\lambda_t,v_t)^2}}.
\]
By taking $\epsilon=1/16$ we guarantee that the constant term in the right hand side is at most $0.003579$. The induction step now follows from Proposition \ref{prop:onestep}. For the upper complexity bound, 
 from Proposition \ref{prop:zetamoves} and Proposition \ref{prop:changemu}, for every $s\in[t,t+b]$ we have
\[
\mu(A_s,\lambda_s,v_s) \geq \frac{ 16\mu(A_{now},\lambda_{t},v_{t})}{17}\geq\frac{ 4\mu(A_{now},\lambda_{now},v_{now})}{5}.
\]
We thus have at every step of the loop:
\[
\int_t^{t+b}\mu(A_s,\lambda_s,v_s)^2\,ds\geq \frac{ 4^2\,b\,\mu(A_{now},\lambda_{now},v_{now})^2}{5^2}\geq \frac{16C_\epsilon}{6\sqrt{2}(1+\epsilon)25}\geq\frac{3}{10^4}.
\]
We thus have by induction, after $k$ steps of the loop if the termination condition is not reached:
\[
\frac{3k}{10^4}\leq \int_0^{t+b}\mu(A_s,\lambda_s,v_s)^2\,ds\leq  \int_0^{a}\mu(A_s,\lambda_s,v_s)^2\,ds,
\]
and the claim on the upper bound of the number of steps follows. Note finally that each step requires some linear algebra operations such as matrix inversion which run in time $O(n^3)$, as well as the estimation up to a factor of $3$ of the squared operator norm of a matrix (to compute $\mu(A_{now},\lambda_{now},v_{now}$). Such a bound on the squared operator norm of any matrix $X$ can be easily computed by first transforming $XX^*$ to triangular Hessenberg form (which requires $O(n^3)$ operations) and then using the bound in \cite[Page 1]{Kahan1966} which computes the norm of a symmetric tridiagonal matrix up to a factor of $\sqrt{3}$.
\section{The variation of $\mu$ out of $\CW$}
\begin{lemma}\label{lem:moore}
Let $R,R'\in\C^{n\times n}$ where $R$ and $R'$ have rank $n-1$. Assume moreover that
\[
\|R-R'\|\leq \frac{\epsilon}{\|R^\dagger\|},\quad 0\leq\epsilon\leq 1/2.
\]
Then,
\[
\frac{1}{1+\epsilon}\|R^\dagger\|\leq \|R'^\dagger\|\leq \frac{\|R^\dagger\|}{1-\epsilon}.
\]
\end{lemma}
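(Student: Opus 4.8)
The plan is to reduce the statement to a perturbation estimate for the smallest nonzero singular value. Write $\sigma_1(R)\ge\cdots\ge\sigma_n(R)\ge0$ for the singular values of $R$, and similarly for $R'$. Since $R$ has rank exactly $n-1$ we have $\sigma_n(R)=0<\sigma_{n-1}(R)$, so the operator norm of the pseudoinverse is $\|R^\dagger\|=\sigma_{n-1}(R)^{-1}$; likewise, because $R'$ is assumed to have rank $n-1$, $\|R'^\dagger\|=\sigma_{n-1}(R')^{-1}$. Hence the pair of inequalities to be proved is equivalent to the single two-sided bound
\[
(1-\epsilon)\,\sigma_{n-1}(R)\ \le\ \sigma_{n-1}(R')\ \le\ (1+\epsilon)\,\sigma_{n-1}(R).
\]

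To get this bound I would invoke the $1$-Lipschitz property of singular values under additive perturbations measured in the operator norm, i.e.\ Weyl's inequality $|\sigma_k(R')-\sigma_k(R)|\le\|R-R'\|$ for each index $k$, which follows immediately from the Courant--Fischer min--max characterisation of $\sigma_k$. Specialising to $k=n-1$ and inserting the hypothesis $\|R-R'\|\le\epsilon/\|R^\dagger\|=\epsilon\,\sigma_{n-1}(R)$ yields $|\sigma_{n-1}(R')-\sigma_{n-1}(R)|\le\epsilon\,\sigma_{n-1}(R)$, which is precisely the displayed two-sided bound.

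Finally I would take reciprocals and re-express everything through pseudoinverse norms: dividing the chain $(1-\epsilon)\sigma_{n-1}(R)\le\sigma_{n-1}(R')\le(1+\epsilon)\sigma_{n-1}(R)$ and using $\|R^\dagger\|=\sigma_{n-1}(R)^{-1}$, $\|R'^\dagger\|=\sigma_{n-1}(R')^{-1}$, one arrives at $\frac{1}{1+\epsilon}\|R^\dagger\|\le\|R'^\dagger\|\le\frac{1}{1-\epsilon}\|R^\dagger\|$; the assumption $\epsilon\le1/2$ is used only to keep $1-\epsilon>0$, and in fact any $\epsilon<1$ works. I do not expect any serious obstacle; the only point requiring attention is the rank bookkeeping, namely that $\|R'^\dagger\|=1/\sigma_{n-1}(R')$ holds because $R'$ is assumed to have rank $n-1$ --- Weyl's inequality alone only guarantees that the rank of $R'$ is at least $n-1$, not exactly $n-1$, so without that hypothesis $\sigma_n(R')$ could be a small positive number and $\|R'^\dagger\|$ would be out of control.
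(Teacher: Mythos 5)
Your argument is correct and is essentially the paper's own proof: the paper also reduces to the smallest nonzero singular value, applies the Weyl-type perturbation bound $|\sigma_{n-1}(R')-\sigma_{n-1}(R)|\leq\|R-R'\|$ (citing Golub--Van Loan, Cor.\ 8.6.2), and takes reciprocals using that both matrices have rank exactly $n-1$. Your remark that the rank-$(n-1)$ hypothesis on $R'$ is what licenses $\|R'^\dagger\|=1/\sigma_{n-1}(R')$ matches the implicit use of that hypothesis in the paper.
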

\begin{proof}
Let $\sigma$ ($\sigma'$) be the smallest nonzero singular value of $R$ ($R'$). Note that $\sigma-\|R-R'\|\leq \sigma'\leq \sigma+\|R-R'\|$ (see \cite[Cor. 8.6.2]{GoVa96}. We then have
\[
\|R^\dagger\|=\frac{1}{\sigma}=\frac{1}{\sigma'}\frac{\sigma'}{\sigma}\leq\|R'^\dagger\|\frac{\sigma+\|R-R'\|}{\sigma}\leq(1+\epsilon)\|R'^\dagger\|.
\]
The upper bound follows from a similar argument.
\end{proof}
\begin{proposition}\label{prop:changemu}
Let $(A,\lambda,v)\in\CW$, $(A,\lambda',v')\in\CM\times\C\times\pc$ be such that $\|A\|_F=1$ and
\begin{equation}\label{eq:mmimi}
d_{\P^2}((A,\lambda,v),(A,\lambda',v'))\leq \frac{\epsilon}{5\mu(A,\lambda,v)}
\end{equation}
Then,
\[
\frac{1}{1+\epsilon}\mu(A,\lambda,v)\leq \mu(A,\lambda',v')\leq \frac{\mu(A,\lambda,v)}{1-\epsilon}.
\]
\end{proposition}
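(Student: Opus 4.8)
The plan is to reduce the statement to a singular-value perturbation estimate and then invoke Lemma~\ref{lem:moore}. Since $\|A\|_F=1$, the extended definition of the condition number gives $\mu(A,\lambda,v)=\max(1,\|R^\dagger\|)$ and $\mu(A,\lambda',v')=\max(1,\|R'^\dagger\|)$, where $R=(I-vv^*)(\lambda I-A)$ and $R'=(I-v'v'^*)(\lambda' I-A)$. Both matrices have range inside a hyperplane of $\C^n$ (namely $v^\perp$, resp.\ $v'^\perp$), so each has rank at most $n-1$; and $(A,\lambda,v)\in\CW$ forces $\operatorname{rank}R=n-1$, so $\|R^\dagger\|<\infty$ and, straight from the definition, $\|R^\dagger\|\le\mu(A,\lambda,v)$. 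Everything will then follow from the single inequality
\[
\|R-R'\|\ \le\ \frac{\epsilon}{\mu(A,\lambda,v)}\ \le\ \frac{\epsilon}{\|R^\dagger\|}\ .\qquad(\star)
\]
Indeed, $(\star)$ together with $\epsilon\le\tfrac12$ gives, by Weyl's inequality for singular values (as used in the proof of Lemma~\ref{lem:moore}), $\sigma_{n-1}(R')\ge\sigma_{n-1}(R)-\|R-R'\|>0$, so $\operatorname{rank}R'=n-1$ as well; Lemma~\ref{lem:moore} then yields $\frac{1}{1+\epsilon}\|R^\dagger\|\le\|R'^\dagger\|\le\frac{1}{1-\epsilon}\|R^\dagger\|$, and the elementary fact that $t\mapsto\max(1,t)$ preserves a two-sided bound of this shape turns this into the asserted inequalities for $\mu$.

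It remains to prove $(\star)$. Splitting
\[
R-R'=(\lambda-\lambda')(I-vv^*)+(v'v'^*-vv^*)(\lambda' I-A),
\]
and using $\|I-vv^*\|=1$ together with $\|\lambda' I-A\|\le|\lambda'|+\|A\|\le|\lambda'|+1$, one gets $\|R-R'\|\le|\lambda-\lambda'|+(|\lambda'|+1)\,\|vv^*-v'v'^*\|$. Put $\delta=d_{\P^2}((A,\lambda,v),(A,\lambda',v'))\le\frac{\epsilon}{5\mu(A,\lambda,v)}\le\tfrac1{10}$, and let $\theta$ (resp.\ $\theta_v$) be the Fubini--Study distance between $[(A,\lambda)]$ and $[(A,\lambda')]$ in $\P(\CM\times\C)$ (resp.\ between $v$ and $v'$ in $\pc$), so that, $d_{\P^2}$ being the Riemannian distance of a product, $\theta^2+\theta_v^2=\delta^2$ and $\theta,\theta_v\le\delta$. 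The projection term is classical: $\|vv^*-v'v'^*\|=\sin\theta_v\le\theta_v$. For the eigenvalue term, note that $\lambda$ is an eigenvalue of $A$, hence $|\lambda|\le\|A\|\le1$ and $\|(A,\lambda)\|\le\sqrt2$; there is $c\in\C$ (the coefficient of the orthogonal projection of $(A,\lambda)$ onto $\C\cdot(A,\lambda')$) with $\|(A,\lambda)-c(A,\lambda')\|=\|(A,\lambda)\|\sin\theta\le\sqrt2\,\theta$, which read off in components gives $|1-c|\le\sqrt2\,\theta$ and $|\lambda-c\lambda'|\le\sqrt2\,\theta$, whence $|\lambda-\lambda'|\le\sqrt2\,\theta\,(1+|\lambda'|)$. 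Since $\lambda'$ is a priori unconstrained I would now bootstrap through $|\lambda'|\le1+|\lambda-\lambda'|$, obtaining $|\lambda-\lambda'|\le\frac{2\sqrt2\,\theta}{1-\sqrt2\,\theta}$, which by $\theta\le\tfrac1{10}$ is a fixed small multiple of $\theta$ and in particular makes $|\lambda'|+1$ a constant close to $2$. Collecting the two terms, $\|R-R'\|\le a\theta+b\theta_v$ with explicit constants $a,b$ slightly above $3$ and $2$; Cauchy--Schwarz and $\theta^2+\theta_v^2=\delta^2$ give $\|R-R'\|\le\sqrt{a^2+b^2}\,\delta$, and a one-line numerical check shows $\sqrt{a^2+b^2}\le5$, which is precisely $(\star)$.

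The only real work is this constant bookkeeping, and the point most likely to cause trouble is exactly that $\lambda'$ is not controlled in advance: one must first extract a clean bound on $|\lambda-\lambda'|$ in terms of $\theta$ alone (via $|\lambda'|\le1+|\lambda-\lambda'|$) before estimating the rest, and one must use that $d_{\P^2}$ is the $\ell^2$-combination of its two projective factors. The factor $\tfrac15$ in hypothesis~\eqref{eq:mmimi} is precisely the cushion that keeps the final constant below $5$. Everything else --- the range of $R'$ lying in $v'^\perp$, Weyl's singular-value inequality, Lemma~\ref{lem:moore}, and stability of a two-sided bound under $t\mapsto\max(1,t)$ --- is routine.
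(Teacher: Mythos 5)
Your proposal is correct and follows essentially the same route as the paper's own proof: split $R-R'$ into the eigenvalue-shift term and the projector-change term, convert the $d_{\P^2}$ hypothesis into bounds on $|\lambda-\lambda'|$ and $\|vv^*-v'v'^*\|$, and conclude via the pseudoinverse perturbation Lemma~\ref{lem:moore} (with the same implicit restriction $\epsilon\le 1/2$ that the lemma requires). If anything, your bookkeeping is more careful than the paper's, which silently drops the factor $\|\lambda' I-A\|$ on the projector term and ignores the $\max(1,\cdot)$ in the definition of $\mu$, whereas you handle both and still land under the constant $5$ by using the $\ell^2$ structure of the product distance.
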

\begin{proof}
Recall that
\[
\mu(A,\lambda,v)=\|((I- vv^*)(\lambda I-A))^\dagger\|,\quad \mu(A,\lambda',v')=\|((I- v'v'^*)(\lambda' I-A))^\dagger\|.
\]
Note that
\[
d_{\P}((A,\lambda),(A,\lambda'))=\arccos\frac{|1+\lambda\lambda'|}{\sqrt{1+|\lambda|^2}\sqrt{1+|\lambda'|^2}}\underset{\text{\eqref{eq:mmimi}}}{\leq} \frac{\epsilon}{5\mu(A,\lambda,v)},
\]
which implies
\[
|\lambda-\lambda'|\leq \frac{2\epsilon}{5\mu(A,\lambda,v)}.
\]
On the other hand,
\[
d_\P(v,v')\underset{\text{\eqref{eq:mmimi}}}{\leq} \frac{\epsilon}{5\mu(A,\lambda,v)}\rightarrow \alpha v'=v+r w,\text{ for some }w,\|w\|=1,r\leq \frac{\epsilon}{5\mu(A,\lambda,v)},|\alpha|=1,
\]
which implies
\[
\|vv^*-v'v'^*\|=\|r wv^*+r vw^*+r^2ww^*\|\leq 3r\leq \frac{3\,\epsilon}{5\mu(A,\lambda,v)}
\]
We thus have that
\[
\|(I- vv^*)(\lambda I-A)-(I- v'v'^*)(\lambda' I-A)\|\leq
\]
\[
\|(I- vv^*)(\lambda I-A)-(I- vv^*)(\lambda' I-A)\|+\|(I- vv^*)(\lambda' I-A)-(I- v'v'^*)(\lambda' I-A)\|\leq
\]
\[
|\lambda-\lambda'|+\|vv^*-v'v'^*\|\leq\frac{2\epsilon}{5\mu(A,\lambda,v)}+\frac{3\,\epsilon}{5\mu(A,\lambda,v)}\leq \
\frac{\epsilon}{\|((I- vv^*)(\lambda I-A))^\dagger\|},
\]
and from lemma \ref{lem:moore} we conclude that
\[
\frac{1}{1+\epsilon}\|((I- vv^*)(\lambda I-A))^\dagger\|\leq \|((I- v'v'^*)(\lambda' I-A))^\dagger\|\leq \frac{\|((I- vv^*)(\lambda I-A))^\dagger\|}{1-\epsilon},
\]
as claimed.
\end{proof}
\bibliographystyle{amsplain}

\bibliography{myreferences2}
\end{document}